\documentclass[11pt]{article}
\usepackage{amsfonts}
\usepackage{color}
\usepackage{amsmath,amssymb,comment}
\newtheorem{theo}{Theorem}[section]
\newtheorem{lem}[theo]{Lemma}

\newtheorem{prop}[theo]{Proposition}

\newtheorem{defi}[theo]{Definition}

\newcommand{\mysection}[1]{\section{#1} \setcounter{equation}{0}}
\newcommand{\proof}{{\sc Proof.} \quad}

\newcommand{\be}{\begin{equation} \label}
\newcommand{\ee}{\end{equation}}
\newcommand{\bea}{\begin{eqnarray}\label}
\newcommand{\eea}{\end{eqnarray}}
\newcommand{\bas}{\begin{eqnarray*}}
\newcommand{\eas}{\end{eqnarray*}}
\newcommand{\bit}{\begin{itemize}}
\newcommand{\eit}{\end{itemize}}
\newcommand{\qed}{\hfill$\Box$ \vskip.2cm}
\newcommand{\nn}{\nonumber}

\newcommand{\eps}{\varepsilon}

\newcommand{\io}{\int_\Omega}
\newcommand{\na}{\nabla}
\newcommand{\Del}{\Delta}

\newcommand{\Om}{\Omega}

\newcommand{\ov}{\overline}

\newcommand{\lbal}{\left\{ \begin{array}{l}}
\newcommand{\lball}{\left\{ \begin{array}{ll}}
\newcommand{\ear}{\end{array} \right.}

\newcommand{\abs}{\\[5pt]}

\newcommand{\adb}{\allowdisplaybreaks}

\newcommand{\uT}{\underline{\Theta}}
\newcommand{\uTeps}{{\underline{\Theta}}_\eps}

\newcommand{\bfi}{\textnormal{\textbf{i}}}
\newcommand{\bfj}{\textnormal{\textbf{j}}}
\newcommand{\bfk}{\textnormal{\textbf{k}}}
\newcommand{\bfl}{\textnormal{\textbf{l}}}
\begin{document}
\adb
%
%
\title{Quantitative positivity properties of the temperature in multi-dimensional thermoviscoelastic systems}
\author{
Torben Fricke\footnote{tjfricke@math.uni-paderborn.de}\\
{\small Universit\"at Paderborn, Institut f\"ur Mathematik}\\
{\small 33098 Paderborn, Germany} }
\date{}
\maketitle
\begin{abstract}
\noindent We consider the classical model for small-strain thermoviscoelastic evolution in isotropic Kelvin-Voigt materials with inertia, as given by
\bas
	\lbal
	u_{tt}=\mu\Del u_t+(\lambda+\mu)\na\na\cdot u_t+\hat{\mu}\Del u+(\hat{\lambda}+\hat{\mu})\na\na\cdot u-B\na\Theta,\\
	\kappa\Theta_t=D\Del\Theta+\mu|\na u_t|^2+(\lambda+\mu)|\text{div }u_t|^2-B\Theta\text{div }u_t.
	\ear
\eas
Recently, \cite{Win26} established global-in-time solvability for an initial value problem in smoothly bounded multi-dimensional domains $\Om$ under homogeneous Dirichlet boundary conditions for the displacement variable $u$, and homogeneous Neumann boundary conditions for the temperature $\Theta$, allowing the heat capacity $\kappa$ to depend on $\Theta$. Moreover, \cite{Win26} proves that the corresponding temperature is positive a.e. in $\Om\times (0,\infty)$.\abs
In the present work, we strengthen this positivity result by deriving a quantitative lower bound for $\Theta$. Our analysis applies to heat capacities $\kappa$ with power-law scaling for small arguments, precisely: either $\kappa(\xi)\sim\xi^\alpha$ for some $\alpha\ge 0$, or $\kappa(\xi)\ge\xi$, in a suitable neighborhood of $\xi=0$. Under these structural assumptions on $\kappa$, we show that for every $\tau>0$ and $T>\tau$ there exists $c(\tau,T)>0$ such that
$$\Theta(x,t)\ge c(\tau,T)\qquad\text{for a.e. }(x,t)\in\Om\times(\tau,T).$$
\noindent {\bf Key words:} thermoviscoelasticity; generalized solution; positivity of temperature\\
{\bf MSC 2020:} 74F05 (primary); 35B09, 35B45, 35D30, 80A17 (secondary)
\end{abstract}
\newpage
\section{Introduction}\label{intro}
Thermoviscoelastic models capture the interplay between mechanical deformations and temperature evolution in viscoelastic materials. Over the past decades, these systems have been studied extensively, with pioneering works (\cite{Daf1}, \cite{Daf2}) establishing foundational techniques. Subsequent research has addressed global solvability (\cite{Jiang}, \cite{Kim}, \cite{Racke}, \cite{Watson}) and long-time behavior (\cite{Hsiao}, \cite{Racke}) in one-dimensional settings, while higher-dimensional problems remain more challenging. \abs
For the classical model of nonlinear thermoviscoelastic dynamics at small strain in isotropic Kelvin-Voigt materials, for instance as given by (\cite{Pawlow13}, \cite{Roubi09}, \cite{Roubi10}), the system 
\be{0}
	\lball
	u_{tt}=\mu \Del u_t+(\lambda+\mu)\na\na\cdot u_t+\hat{\mu}\Del u+(\hat{\lambda}+\hat{\mu})\na\na\cdot u-B\na \Theta,\\[1mm]
	\kappa\Theta_t=D\Del\Theta+\mu |\na u_t|^2+(\lambda+\mu)|\text{div }u_t|^2-B\Theta\text{div }u_t
	\ear
\ee
with constant parameters $\mu,\lambda,\hat{\mu},\hat{\lambda},B,\kappa$ and $D$ has been studied extensively. Various modifications have been considered, including temperature-dependent heat capacities at high temperatures, viscous dissipation at large strains, and saturation effects in thermal expansion (\cite{Blanchard}, \cite{Gawinecki16}, \cite{Pawlow13}, \cite{Pawlow17}, \cite{Roubi09}); see also \cite{Bartels}, \cite{Guo}, \cite{Owczarek}, \cite{Paoli1}, \cite{Paoli2}, \cite{Pawlow00}, \cite{Rossi}, \cite{Roubi10}, \cite{Roubi13} and \cite{Roubi13b} for related problems involving fourth-order terms, plasticity, or adhesive contact.\abs 
For the system \eqref{0}, local-in-time solvability is established in \cite{Bonetti}, whereas \cite{Win26} proves global existence for multi-dimensional domains and discusses large-time stabilization of the temperature. In contrast, previous results for small-data smooth solutions (\cite{Gawinecki12}, \cite{Shibata}) or under strong symmetry and parameter restrictions (\cite{Cieslak}, \cite{Gawinecki03}) do not cover the general large-data case.\abs
Following the framework of \cite{Win26}, we consider a more general formulation with temperature-dependent heat capacity and possibly nonlinear stress-strain relationships. Specifically, we study
\be{sys0}
	\lball
	u_{tt}=\text{div }(\mathbb{D}:\na^s u_t)+\text{div }(\mathbb{C}:\na^s u)-\text{div }(\Theta\mathbb{B})+f(x,t),\qquad& x\in\Om,\ t>0,\\[1mm]
	\kappa(\Theta)\Theta_t=D\Del\Theta+\langle\mathbb{D}:\na^s u_t,\na^s u_t\rangle-\Theta\langle\mathbb{B},\na^s u_t\rangle+g(x,t),\qquad& x\in\Om,\ t>0,\\[1mm]
	u=0,\quad \frac{\partial\Theta}{\partial\nu}=0,& x\in\partial\Om,\ t>0,\\[1mm]
	u(x,0)=u_0(x),\quad u_t(x,0)=u_{0t}(x),\quad \Theta(x,0)=\Theta_0(x), & x\in\Om,
	\ear
\ee
posed in smoothly bounded domain $\Om\subset\mathbb{R}^n$. Here $D>0$, the tensors $\mathbb{D},\ \mathbb{C}\in\mathbb{R}^{n\times n\times n\times n}$ represent the viscous and elastic moduli, $\mathbb{B}\in\mathbb{R}^{n\times n}$ couples mechanics and temperature, $f$ and $g$ are prescribed forcing terms, $\kappa:[0,\infty)\to[0,\infty)$ is a possibly temperature-dependent heat capacity, and $u_0$, $u_{0t}$, $\Theta_0$ are given initial data. This is the setting in which \cite{Win26} established global existence and positivity of solutions; we refine those results by proving a quantitative lower bound for $\Theta$.\abs
\textbf{Main results.}\quad 
As noted above, the results in \cite{Win26} are already quite strong, establishing global existence, stabilization of the temperature, and positivity: $\Theta>0$ a.e. in $\Om\times(0,\infty)$. However, for many applications in thermoviscoelastic systems, a uniform lower bound of the form $\Theta\geq c(t)>0$ is often desired (\cite{Win-Rough} for instance). In this work, we complement the analysis of \cite{Win26} by deriving such a quantitative lower bound.\abs
More precisely, within the framework of the generalized concept of solvability introduced in \cite{Win26} (cf. Definition \ref{DefWeak} below), we first recall the main properties of solutions obtained there:
\begin{prop}\label{theo1}
Let $n\ge 2$ and $\Om\subset\mathbb{R}^n$ be a bounded domain with smooth boundary, $D>0$, and assume that $\mathbb{D}=(\mathbb{D}_{\bfi\bfj\bfk\bfl})_{(\bfi,\bfj,\bfk,\bfl)\in\{1,\dots,n\}^4},\ \mathbb{C}=(\mathbb{C}_{\bfi\bfj\bfk\bfl})_{(\bfi,\bfj,\bfk,\bfl)\in\{1,\dots,n\}^4}\in\mathbb{R}^{n\times n\times n\times n}$ and $\mathbb{B}=(\mathbb{B}_{\bfi\bfj})_{(\bfi,\bfj)\in\{1,\dots,n\}^2}\in\mathbb{R}_{sym}^{n\times n}$ satisfying $\mathbb{D}_{\bfi\bfj\bfl\bfk}=\mathbb{D}_{\bfk\bfl\bfi\bfj}=\mathbb{D}_{\bfj\bfi\bfk\bfl}$ and $\mathbb{C}_{\bfi\bfj\bfl\bfk}=\mathbb{C}_{\bfk\bfl\bfi\bfj}=\mathbb{C}_{\bfj\bfi\bfk\bfl}$ for all $(\bfi,\bfj,\bfk,\bfl)\in\{1,\dots,n\}^4$, with constants $k_\mathbb{D}>0$ and $k_\mathbb{C}>0$ such that
\bea{t1.3}
\langle\mathbb{D}:\mathbb{A},\mathbb{A}\rangle\ge k_\mathbb{D}|\mathbb{A}|^2\quad\text{and}\quad\langle\mathbb{C}:\mathbb{A},\mathbb{A}\rangle\ge k_\mathbb{C}|\mathbb{A}|^2\qquad\text{for all }\mathbb{A}\in\mathbb{R}_{sym}^{n\times n}.
\eea
Suppose that $f\in L_{loc}^1([0,\infty);L^2(\Om;\mathbb{R}^n))$, $g\in L_{loc}^1(\ov\Om\times[0,\infty))$ with $g\ge 0$, and $\kappa\in C^0([0,\infty))$ with $\kappa>0$ on $(0,\infty)$. Let the initial data satisfy
\be{t1.7}
	\lball
	u_0\in W_0^{1,2}(\Om;\mathbb{R}^n),\\[1mm]
	u_{0t}\in L^2(\Om,\mathbb{R}^n)\qquad\text{and}\\[1mm]
	\Theta_0:\Om\to\mathbb{R}\quad\text{is such that}\quad\Theta_0\not\equiv 0,\quad K(\Theta_0)\in L^1(\Om)\quad\text{and}\quad l(\Theta_0)\in L^1(\Om),
	\ear
\ee
where
\bea{t1.8}
K(\xi):=\int_0^\xi \kappa(\sigma)d\sigma,\qquad \xi\ge 0,\quad\text{and}\quad l(\xi):=\int_1^\xi\frac{\kappa(\sigma)}{\sigma}d\sigma,\qquad \xi>0.
\eea
(A1)\quad If $\kappa$ satisfies $\liminf_{\xi\to\infty}\kappa(\xi)>0$, then there exist
\be{t1.10}
	\lball
	u\in C^0([0,\infty);L^2(\Om,\mathbb{R}^n))\cap L_{loc}^\infty([0,\infty);W_0^{1,2}(\Om;\mathbb{R}^n))\qquad\text{and}\\
	\Theta\in L_{loc}^\infty([0,\infty);L^1(\Om))
	\ear
\ee
such that $\Theta>0$ a.e. in $\Om\times(0,\infty)$, and that $(u,\Theta)$ forms a generalized solution of \eqref{sys0} in the sense of Definition \eqref{DefWeak} i).\abs
(A2)\quad If $\kappa(\xi)\cdot\ln^{(3-n)_+}\xi\to+\infty$ as $\xi\to\infty$,
then there exists a global integrable solution $(u,\Theta)$ of \eqref{sys0} in the sense of Definition \ref{DefWeak} ii) with $\Theta>0$ a.e. in $\Om\times(0,\infty)$. Moreover, if $\liminf_{\xi\to\infty}\kappa(\xi)>0$, then $\Theta\in L_{loc}^\infty([0,\infty);L^1(\Om))$.
\end{prop}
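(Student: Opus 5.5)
This proposition only recalls the existence and positivity results of \cite{Win26}, so my plan is to reconstruct the structure of that argument rather than to prove anything new. The first step is a regularization. I would replace $\kappa$ by $\kappa_\eps:=\kappa+\eps$, or by a version truncated so that it is bounded above and below by positive constants. I would also smooth $f$, $g$, $u_0$ and $u_{0t}$, and replace $\Theta_0$ by a smooth, strictly positive $\Theta_{0\eps}$ such that $K(\Theta_{0\eps})\to K(\Theta_0)$ and $l(\Theta_{0\eps})\to l(\Theta_0)$ in $L^1(\Om)$. For these approximate problems, classical parabolic theory and a fixed-point argument (cf.\ \cite{Bonetti}) give global smooth solutions $(u_\eps,\Theta_\eps)$. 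Since $g\ge0$ and the dissipation $\langle\mathbb{D}:\na^s u_{\eps t},\na^s u_{\eps t}\rangle$ is nonnegative, the comparison principle shows $\Theta_\eps>0$.

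The second step is the $\eps$-independent estimates, which I would derive in two parts.

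\emph{Energy identity.} Test the first equation with $u_{\eps t}$ and add the integral of the second equation. The coupling terms $\pm\Theta\langle\mathbb{B},\na^s u_t\rangle$ cancel, and this yields
$$\frac{d}{dt}\Big\{\frac12\io|u_{\eps t}|^2+\frac12\io\langle\mathbb{C}:\na^s u_\eps,\na^s u_\eps\rangle+\io K_\eps(\Theta_\eps)\Big\}=\io f\cdot u_{\eps t}+\io g.$$
With \eqref{t1.3} and Gronwall's inequality, this bounds $u_\eps$ in $L^\infty_{loc}W^{1,2}_0$, $u_{\eps t}$ in $L^\infty_{loc}L^2$, and $K(\Theta_\eps)$ in $L^\infty_{loc}L^1$.

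\emph{Entropy-type identity.} Divide the heat equation by $\Theta_\eps$ and integrate, which yields
$$\frac{d}{dt}\io l_\eps(\Theta_\eps)=D\io\frac{|\na\Theta_\eps|^2}{\Theta_\eps^2}+\io\frac{\langle\mathbb{D}:\na^s u_{\eps t},\na^s u_{\eps t}\rangle}{\Theta_\eps}-\io\langle\mathbb{B},\na^s u_{\eps t}\rangle+\io\frac{g}{\Theta_\eps}.$$
The term $\io\langle\mathbb{B},\na^s u_{\eps t}\rangle$ vanishes because $u_{\eps t}=0$ on $\partial\Om$. Hence $\io l_\eps(\Theta_\eps)$ is nondecreasing up to harmless terms. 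Combined with $l(\xi)\le K(\xi)$-type bounds for large $\xi$, this controls $\io|\na\ln\Theta_\eps|^2$ in $L^1_{loc}$ and bounds $\io l(\Theta_\eps)$ from below uniformly. Because $l(\xi)\to-\infty$ as $\xi\to0$ whenever $\kappa(0)>0$, or at least $l$ is monotone, this lower bound is what excludes $\Theta=0$ on sets of positive measure after passing to the limit. Under (A1), the condition $\liminf_{\xi\to\infty}\kappa(\xi)>0$ makes $K(\xi)\gtrsim\xi$, which gives the bound on $\Theta$ in $L^\infty_{loc}L^1$.

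The third step is compactness and identification of the limit. Aubin--Lions arguments applied to $u_\eps$, and to $\ln\Theta_\eps$ or $\Theta_\eps^{\delta}$ via the gradient bounds, give a.e.\ convergence of $\Theta_\eps$. In case (A1) one passes to the limit only in the weaker formulation of Definition \ref{DefWeak} i), which consists of the momentum equation, the energy inequality and the entropy inequality. I expect the main obstacle here to be the quadratic dissipation term $\langle\mathbb{D}:\na^s u_t,\na^s u_t\rangle$, which is controlled only in $L^1$. My first attempt would be to obtain strong $L^2_{loc}$ convergence of $\na u_{\eps t}$ by comparing energy identities, in the usual $\limsup$-of-energy argument. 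In case (A2), the growth $\kappa(\xi)\ln^{(3-n)_+}\xi\to\infty$ is exactly what gives equi-integrability of $\Theta_\eps\na^s u_{\eps t}$ and of the heat flux, so the heat equation passes to the limit in integrable form.

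Finally, positivity: a.e.\ convergence together with the uniform lower bound on $\io l(\Theta_\eps)$ via Fatou shows $l(\Theta)>-\infty$ a.e. Where $l$ is bounded below near $0$, one instead uses the $L^2$ bound on $\na\ln\Theta$, a Poincaré inequality, and $\Theta_0\not\equiv0$ to conclude $\ln\Theta\in L^1_{loc}$, and hence $\Theta>0$ a.e.
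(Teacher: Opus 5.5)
The paper does not prove this proposition. It imports it from \cite{Win26}, and the only trace of that proof here is the approximation scheme in Section~\ref{sec:2.2}. Your outline has the right architecture: energy identity, monotonicity of $\io l(\Theta_\eps)$, a.e.\ compactness, and positivity via entropy. However, two of its steps fail as written.

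\textbf{The approximate problem.} Replacing $\kappa$ by $\kappa+\eps$ and smoothing the data leaves the full hyperbolic--parabolic system untouched, including the quadratic source $\langle\mathbb{D}:\na^s u_t,\na^s u_t\rangle$ and the coupling $\Theta\langle\mathbb{B},\na^s u_t\rangle$. For that system only local classical solvability is known (\cite{Bonetti}). Global smooth solvability for large data is exactly what is unavailable, so ``classical parabolic theory and a fixed-point argument'' does not give you global $(u_\eps,\Theta_\eps)$. This is why \eqref{sys1} contains the extra term $\eps\Del^{2m}v_\eps$ with $m>\frac{n+2}{4}$. That term adds $\eps\int_0^T\|\Del^m v_\eps\|_{L^2(\Om)}^2$ to the energy identity. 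Via $W^{2m,2}(\Om)\hookrightarrow W^{1,\infty}(\Om)$, it then places $\na^s v_\eps$ in $L^2((0,T);L^\infty(\Om))$ for each fixed $\eps$, which is what makes Lemma~\ref{lem2.2} possible.

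\textbf{The passage to the limit.} Your total energy identity contains no viscous dissipation, because that dissipation is converted into heat. To see $\io\langle\mathbb{D}:\na^s u_{\eps t},\na^s u_{\eps t}\rangle$ you must test the momentum equation with $u_{\eps t}$ alone. That produces $\io\Theta_\eps\langle\mathbb{B},\na^s u_{\eps t}\rangle$, which cannot be absorbed, since $\Theta_\eps$ is only bounded in $L^\infty((0,T);L^1(\Om))$. So $\na^s u_{\eps t}$ is not even bounded in $L^2$, and a $\limsup$-of-energy argument for strong $L^2$ convergence has nothing to start from. Nor can the limit momentum equation be tested with $u_t$.

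What you do get are weighted bounds. For example, in case (A1), testing the heat equation with $\frac{1}{1+\Theta_\eps}$ and using $\frac{\Theta_\eps^2}{1+\Theta_\eps}\le\Theta_\eps$ gives $\int_0^T\io\frac{|\na^s u_{\eps t}|^2}{1+\Theta_\eps}\le C(T)$. This yields only $L^1$ control and the weak convergence \eqref{2.20}.

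The missing idea is that Definition~\ref{DefWeak} is built to need no strong convergence at all. The heat equation is imposed only as the one-sided inequality \eqref{2.10}, for concave $\phi$ with compactly supported $\phi'$:
\begin{itemize}
\item Since $K^{(\phi)}$ and $\xi\phi'(\xi)$ are bounded, the time-derivative and coupling terms converge by a.e.\ convergence.
\item $\sqrt{\phi'(\Theta_\eps)}\,\na^s u_{\eps t}$ is bounded in $L^2$.
\item The terms $\int\phi'(\Theta)\langle\mathbb{D}:\na^s u_t,\na^s u_t\rangle\varphi$ and $-D\int\phi''(\Theta)|\na\Theta|^2\varphi$ are merely weakly lower semicontinuous. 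This is exactly where the ``$\ge$'' comes from.
\end{itemize}

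Relatedly, an ``integrable solution'' means $\mu\equiv 0$. That is, $\Theta_\eps$ itself is equi-integrable, so $\Theta_\eps\to\Theta$ in $L^1$ and $\text{div}(\Theta\mathbb{B})$ passes to the limit in \eqref{2.6} without a defect measure. This, not an integrable form of the heat equation, is what the growth condition in (A2) is used for.

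Your positivity step is essentially sound, with two conditions. First, apply Fatou to the negative part of $l$, using $l\le K$ on $[1,\infty)$. Second, in the branch where $l$ is bounded near $0$, you must actually supply the uniform measure bound $|\{\Theta_\eps(\cdot,t)>b\}|\ge\frac{1}{C(T)}$ of Lemma~\ref{W1}. It follows from the monotonicity of $\io l(\Theta_\eps)$, the energy bound and $\Theta_0\not\equiv0$. Then apply the Poincar\'e inequality to $(\ln b-\ln\Theta_\eps)_+$, which vanishes on that set.
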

Building upon this, we then establish our main result, which provides a quantitative lower bound for the temperature.
\begin{theo}\label{theo2}
Assume, that in addition to Proposition \ref{theo1}, there exists $\eta>0$ such that $\kappa$ is nondecreasing on $(0,\eta)$ and that $\kappa$ satisfies one of the following two structural conditions:\abs
(B1)\quad There exist constants $\alpha\ge 0$ and $c_\kappa>0$ such that
\bea{A1}
\frac{1}{c_\kappa}\xi^\alpha\ge \kappa(\xi)\ge c_\kappa \xi^\alpha\qquad\text{for all }\xi\in(0,\eta);
\eea
(B2)\quad There exists a constant $c_\kappa>0$ such that
\bea{A2}
\kappa(\xi)\ge c_\kappa \xi\qquad\text{for all }\xi\in(0,\eta).
\eea
Then for each $\tau>0$ and $T>\tau$, there exists $c(\tau,T)>0$ such that
\bea{A3}
\Theta(x,t)\ge c(\tau,T)\qquad\text{for a.e. }(x,t)\in \Om\times(\tau,T).
\eea
\end{theo}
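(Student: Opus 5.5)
We will work at the level of the approximate problems from which the generalized solutions in Proposition \ref{theo1} arise: smooth solutions $(u_\eps,\Theta_\eps)$ with $\Theta_\eps>0$. We derive lower bounds that are uniform in $\eps$ and then pass to the limit a.e. The idea is to find a quantity whose smallness is controlled by an $L^p$-type estimate, and to which a Moser-type iteration or a comparison argument applies.

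The first step is to change variables so that the bad term $-\Theta\langle\mathbb{B},\na^s u_t\rangle$ is absorbed. Dividing the temperature equation by $\Theta^{1+p}$ with $p>0$ and using Young's inequality,
\[
\Theta\,|\langle\mathbb{B},\na^s u_t\rangle| \le \langle\mathbb{D}:\na^s u_t,\na^s u_t\rangle + C\Theta^2 ,
\]
which uses (\ref{t1.3}). We drop the nonnegative dissipation and $g\ge 0$, and obtain for $w:=1/\Theta$ (more precisely for a function $\Phi(\Theta)=\int_\Theta^\eta \kappa(\sigma)\sigma^{-1-p}\,d\sigma$) a differential inequality of the form
\[
\partial_t \Phi(\Theta) \le -D\,\Del\text{-type diffusion terms} + C\,\Theta^{1-p}\cdot\frac{1}{\kappa}\text{-weighted terms}.
\]
The key point is that the reaction term produced by the coupling is at most linear in $\Theta$, so it is harmless for small $\Theta$ once $\kappa$ is controlled from below. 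In case (B1) $\kappa\sim\xi^\alpha$, so $\Phi(\Theta)\sim\Theta^{\alpha-p}$ for $p>\alpha$. In case (B2) we use only $\kappa(\xi)\ge c_\kappa\xi$, and the monotonicity of $\kappa$ on $(0,\eta)$ keeps the relevant primitives comparable to powers.

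The second step is a family of $L^q$ bounds for negative powers. Testing by $(\Theta-\eta)_-$-localized negative powers, we obtain for $z:=\Theta^{-r}$ (with $r>0$ large in (B1); in (B2) we take $r$ adapted so that $\kappa(\Theta)\Theta^{-r-1}\ge c\,\Theta^{-r}$) inequalities of the form
\[
\frac{d}{dt}\io \Psi_q(\Theta) + c\,q^{-1}\io |\na z^{q/2}|^2 \le C q \io z^{q}+C ,
\]
where $\Psi_q$ is comparable to $z^q$ on $\{\Theta<\eta\}$. The region $\{\Theta\ge\eta\}$ only contributes constants. The starting estimate comes from the entropy information in Proposition \ref{theo1}, namely $l(\Theta)$ bounded in $L^1$, which gives $\io(-\ln\Theta)_+$ control. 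That is, we get some small negative power $\io\Theta^{-\delta}$ bounded on $(\tau/2,T)$ by standard arguments, or alternatively via a local-in-time bound using $\Theta_0\not\equiv0$ together with mass/entropy estimates. Since we only need bounds for $t>\tau$, we use time cutoffs $\zeta(t)$ vanishing near $\tau/2$. This removes any dependence on initial integrability of $\Theta_0^{-1}$.

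The third step is a Moser iteration in space-time with shrinking time intervals $(\tau_k,T)$, $\tau_k\nearrow\tau$, and exponents $q_{k+1}=\frac{n+2}{n}q_k$. We use the parabolic Gagliardo–Nirenberg/Sobolev embedding, which is valid for Neumann problems with the additional $L^1$-type term. This yields $\|\Theta^{-1}\|_{L^\infty(\Om\times(\tau,T))}\le C(\tau,T)$ uniformly in $\eps$, and hence (\ref{A3}) after passing to the limit.

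We expect the main obstacle to be step one combined with the start of step two. The coupling term produces, after division by powers of $\Theta$, a contribution like $\Theta^{1-r}/\kappa(\Theta)$. Under (B1) this is $\sim\Theta^{1-\alpha}\cdot\Theta^{-r}$, which is fine only if the exponents line up, so we must choose the test function via $\kappa$ itself, i.e.\ use $\int\kappa(\sigma)\sigma^{-r-1}d\sigma$, and use the upper bound in (B1) to compare with pure powers. Under (B2) only a lower bound on $\kappa$ is available, so we will try to arrange every estimate so that just $\kappa\ge c_\kappa\xi$ and monotonicity are needed. Should this fail, the fallback is a comparison with a spatially homogeneous subsolution $\underline{\Theta}(t)$ solving $K(\underline\Theta)'\ge -C\underline\Theta^2$-type ODEs. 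Such a comparison is, however, delicate because the $u_t$-dependence is not pointwise bounded.
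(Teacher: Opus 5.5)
There is a genuine gap, and it sits where you expected it. Your scheme has no valid source for the lowest-order estimate that starts the iteration, uniformly in $\eps$ and independent of the initial data.

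First, $l(\Theta)\in L^1(\Om)$ does not control $\io(-\ln\Theta)_+$ in general. Indeed $l(\xi)=-\int_\xi^1\kappa(\sigma)\sigma^{-1}d\sigma$ stays bounded as $\xi\searrow0$ whenever $\int_0^1\kappa(\sigma)\sigma^{-1}d\sigma<\infty$. This happens under (B1) with $\alpha>0$, and under (B2) with e.g.\ $\kappa(\xi)=\xi$ near $0$. In these cases $l(\Theta_0)\in L^1(\Om)$ holds even if $\Theta_0$ vanishes on a set of positive measure, and the entropy says nothing about small temperatures. Its dissipation gives at most a bound on $\int_0^T\io|\na\ln\Theta_\eps|^2$, which is again only logarithmic information.

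Second, even for $\kappa(0)>0$, where a bound on $\io(-\ln\Theta_\eps)_+$ is available, passing to a bound on $\io\Theta_\eps^{-\delta}$ is not a standard step. It is the crossover lemma of Moser's weak Harnack inequality (John--Nirenberg or Bombieri--Giusti), which you neither carry out nor adapt to the nonlinear factor $\kappa(\Theta)$. Time cutoffs do not cure this. They remove the initial data from the higher exponents only once the lowest exponent is controlled on all of $\Om\times(\frac{\tau}{2},T)$.

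Nor can your $\Phi$-inequality produce this bound by itself. For $\alpha=0$ one has $\Phi(\Theta)\simeq\Theta^{-p}$, and the dissipation is a multiple of $\io|\na(\Theta^{-p/2})|^2$. Both sides are therefore quadratic in $\Theta^{-p/2}$. Even with the positivity sets of Lemma \ref{W1} and a Poincar\'e inequality you only get $Y'\le -cY+C$ for $Y=\io\Phi(\Theta_\eps)$. This keeps the dependence on the initial value of $Y$, which is unbounded as $\eps\searrow0$ when $\Theta_0=0$ on a set of positive measure.

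The missing idea is to work in the logarithmic variable. The paper compares $\Theta_\eps$ with the solution $\uTeps$ of $\kappa_\eps(\uTeps)\uT_{\eps t}=D\Del\uTeps-a\uTeps^2$ starting from $\min\{\Theta_\eps(\cdot,t_0),e^{-1},\delta\}$; your Young step is exactly what makes this a subsolution. Then $\uTeps\le\min\{e^{-1},\delta\}$, $z_\eps:=-\ln\uTeps\ge1$, and $\kappa$ is only evaluated where (B1)/(B2) hold.

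Short-time persistence of $\io K_\eps(\uTeps)$, together with Lemma \ref{W1}, gives sets $E_\eps(t)$ of measure bounded below on which $z_\eps$ is bounded. The localized Poincar\'e inequality of Lemma \ref{AE1} then turns the dissipation in $\frac{d}{dt}\io z_\eps$ into a Riccati inequality $y'+cy^2\le C$ for $y=\io z_\eps$. The superlinear absorption comes from the mismatch of homogeneity between $\io z_\eps$ and $\io|\na z_\eps|^2$. It is available for every admissible $\kappa$, unlike for your negative-power functionals (compare the case $\alpha=0$ above). It yields $\io z_\eps(\cdot,t)\le\frac{C}{t-t_0}$ independently of the initial data (Lemma \ref{lem4.6}).

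The Moser iteration is then run on $\io z_\eps^{p}$, i.e.\ on powers of the logarithm rather than on negative powers of $\Theta$. For $\alpha>1$ the source $a\uTeps/\kappa_\eps(\uTeps)$, of order $\uTeps^{1-\alpha}$, is absorbed by Young's inequality into the $\kappa_\eps^{-1}$-weighted dissipation.

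Finally, your fallback misplaces the difficulty. After your own Young estimate the $u_t$-terms are harmless pointwise. What rules out a spatially homogeneous subsolution is that it must start below $\operatorname{ess\,inf}\Theta(\cdot,t_0)$, which may be $0$ since only $\Theta_0\not\equiv0$ is assumed. Positivity therefore has to be transported by diffusion from the sets where $\Theta$ is large.
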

\textbf{Remark.}\quad Note that condition (B1) with $\alpha \in [0,1]$ implies condition (B2). Consequently, in the case $\alpha \in [0,1]$, condition (B2) already applies, 
and the special analysis for (B1) is needed only when $\alpha > 1$. 
This reduction is used implicitly in the proof of Lemma \ref{Lem5.3}.\abs
\textbf{Sketch of proof.}\quad 
The main idea is to exploit mass conservation of the temperature. Since $\Theta_0\not\equiv 0$, the total thermal energy $\int_\Omega K(\Theta)$ remains positive for $t>0$. We construct a subsolution $\underline{\Theta}$ to the temperature equation satisfying
\bea{t1.20}
\kappa(\underline{\Theta})\underline{\Theta}_t\ge a\Del\underline{\Theta}-C\underline{\Theta}^2
\eea
for suitable $a>0$, which by comparison ensures $\underline{\Theta}\le\Theta$. Mass preservation implies that $\Theta$ is bounded away from zero on a subset of positive measure in any small time interval $[t_0, t_0 + \tau)$. To convert this subdomain positivity into a uniform bound, we use the logarithmic substitution $z = -\ln \Theta$, which satisfies a parabolic equation of the form
$$
z_t = \frac{D \Delta z}{\kappa(\Theta)} - \frac{D|\nabla z|^2}{\kappa(\Theta)} + \frac{a\Theta}{\kappa(\Theta)}.
$$
Using the localization to the positive-measure region and a Poincaré-type inequality, we first establish $L^1$-bounds on $z$. A Moser iteration then yields $L^\infty$-bounds on $z$ over $(t_0+\frac{\tau}{2}, t_0+\tau)$, which translate to quantitative lower bounds $\Theta \ge c(\tau) > 0$. Finally, since mass is conserved at each time level, we iterate this construction forward in time to extend the lower bound over any interval $(\tau, T)$.
\mysection{Preliminaries}\label{sec:2}
\subsection{Generalized and integrable solutions}
We follow the solution concept introduced in \cite{Win26}. Let $\mathcal{M}(\ov\Omega)$ denote the Banach space of signed Radon measures on $\ov\Omega$ and $L^\infty_{w-*}((0,\infty);\mathcal{M}(\ov\Omega))$ the space of weak-$*$ measurable functions $\mu:(0,\infty)\to\mathcal{M}(\ov\Omega)$, with $\|\mu\|_{L^\infty_{w-*}} := \operatorname{ess\,sup}_{t>0}\|\mu(t)\|_{\mathcal{M}(\ov\Omega)}$.
\begin{defi}\label{DefWeak}
Let $\Om\subset\mathbb{R}^n$ be a bounded domain with smooth boundary, $D>0$, and let $\mathbb{D},\ \mathbb{C},\ \mathbb{B}$ be as in Proposition \ref{theo1}. Suppose $f\in L_{loc}^1([0,\infty);L^2(\Om,\mathbb{R}^n))$, $g\in L_{loc}^1(\ov\Om\times[0,\infty))$ with $g\ge 0$, $\kappa\in C^0([0,\infty))$, and initial data $u_0\in W_0^{1,2}(\Om;\mathbb{R}^n)$, $u_{0t}\in L^2(\Om,\mathbb{R}^n)$ and $\Theta_0\in L^1(\Om)$ with $\Theta_0\ge 0$ a.e. in $\Om$ and $K(\Theta_0)\in L^1(\Om)$, where $K(\xi):=\int_0^\xi \kappa(\sigma)d\sigma$, $\xi\ge 0$.\abs
i)\quad By a generalized solution of \eqref{sys0} we mean a pair of functions
\be{2.1}
	\lball
	u\in L_{loc}^\infty([0,\infty);W_0^{1,2}(\Om;\mathbb{R}^n))\qquad\text{and}\\[1mm]
	\Theta\in L_{loc}^1(\ov\Om\times[0,\infty))
	\ear
\ee
such that $u_t\in L_{loc}^\infty([0,\infty);L^2(\Om;\mathbb{R}^n))$ and $\na^s u_t\in L_{loc}^1(\ov\Om\times[0,\infty);\mathbb{R}^{n\times n})$, moreover $\Theta\ge 0$ a.e. in $\Om\times(0,\infty)$ and $K(\Theta)\in L_{loc}^\infty([0,\infty);L^1(\Om))$ and $\na\phi_0(\Theta)\in L_{loc}^2(\ov\Om\times[0,\infty);\mathbb{R}^n)$ for all $\phi_0\in C^1([0,\infty))$ with compact support. Furthermore, there exist $\mu\in L_{w^{-\star},loc}^\infty([0,\infty);\mathcal{M}_+(\ov\Om))$ and $a>0$ such that
\bea{2.6}
    \int_0^\infty\int_\Omega u&&\hspace{-9mm}\cdot\ \varphi_{tt} + \int_\Omega u_0\cdot\varphi_t(\cdot,0) 
    - \int_\Omega u_{0t}\cdot\varphi(\cdot,0) \nn\\
    &=& -\int_0^\infty\int_\Omega\langle\mathbb{D}:\nabla^s u_t,\nabla\varphi\rangle 
    - \int_0^\infty\int_\Omega\langle\mathbb{C}:\nabla^s u,\nabla\varphi\rangle \notag\\
    && + \int_0^\infty\int_\Omega\Theta\langle\mathbb{B},\nabla\varphi\rangle 
    + \int_0^\infty\int_{\ov\Omega}\langle\mathbb{B},\nabla\varphi\rangle d\mu 
    + \int_0^\infty\int_\Omega f\cdot\varphi
\eea
holds for all $\varphi\in C_0^\infty(\Omega\times[0,\infty);\mathbb{R}^n)$, as well as
\bea{2.7}
    \mathcal{F}(t) + a\int_{\ov\Omega}d\mu(t) \leq \mathcal{F}_0 
    + \int_0^t\int_\Omega f\cdot u_t + \int_0^t\int_\Omega g 
    \qquad \text{for a.e. } t>0,
\eea
with
\bea{2.8}
    \mathcal{F}(t) &:=& \frac{1}{2}\int_\Omega|u_t(\cdot,t)|^2 
    + \frac{1}{2}\int_\Omega\langle\mathbb{C}:\nabla^s u(\cdot,t),\nabla^s u(\cdot,t)\rangle 
    + \int_\Omega K(\Theta(\cdot,t)), \qquad t>0,\label{2.8}\\
    \mathcal{F}_0 &:=& \frac{1}{2}\int_\Omega|u_{0t}|^2 
    + \frac{1}{2}\int_\Omega\langle\mathbb{C}:\nabla^s u_0,\nabla^s u_0\rangle 
    + \int_\Omega K(\Theta_0),\label{2.9}
\eea
and such that for each nondecreasing concave $\phi\in C^\infty([0,\infty))$ with 
$\phi'\in C_0^\infty([0,\infty))$,
\bea{2.10}
    -\int_0^\infty\int_\Omega&&\hspace{-9mm} K^{(\phi)}(\Theta)\varphi_t 
    - \int_\Omega K^{(\phi)}(\Theta_0)\varphi(\cdot,0)\nn\\ 
    &\ge& -D\int_0^\infty\int_\Omega\phi''(\Theta)|\nabla\Theta|^2\varphi 
    - D\int_0^\infty\int_\Omega\phi'(\Theta)\nabla\Theta\cdot\nabla\varphi+ \int_0^\infty\int_\Omega\phi'(\Theta)\langle\mathbb{D}:\nabla^s u_t,\nabla^s u_t\rangle\varphi \notag\\
    && - \int_0^\infty\int_\Omega\Theta\phi'(\Theta)\langle\mathbb{B},\nabla^s u_t\rangle\varphi 
    + \int_0^\infty\int_\Omega\phi'(\Theta)g\varphi
\eea
for all nonnegative $\varphi\in C_0^\infty(\ov\Omega\times[0,\infty))$ holds, where $K^{(\phi)}(\xi) := \int_0^\xi\kappa(\sigma)\phi'(\sigma)\,d\sigma.$\abs
ii)\quad A generalized solution $(u,\Theta)$ of \eqref{sys0} is called an \emph{integrable solution} of \eqref{sys0} if in the above we may choose $\mu\equiv 0$.
\end{defi}
\subsection{A regularized problem and known results}\label{sec:2.2}
We recall the regularized problem from \cite{Win26} and collect necessary results.\abs
Specifically, here and below assume that $n\ge 2$ and that $\Om\subset\mathbb{R}^n$ bounded with smooth boundary, and let $D$, $\mathbb{D}$, $\mathbb{C}$, $\mathbb{B}$, $f$, $g$ and $\kappa$ be as in Proposition \ref{theo1}. Let the functions $K$ and $l$ be as defined in \eqref{t1.8}, and suppose that the initial data $u_0$, $u_{0t}$ and $\Theta_0$ are such that \eqref{t1.7} holds. Fix $\eta>0$ such that $\kappa$ is nondecreasing on $(0,\eta)$. By standard smoothing procedures, there exist families $(\kappa_\eps)_{\eps\in(0,1)}\subset C^\infty([0,\infty))$ with
\bea{kappa-bed}
\kappa_\eps(\xi)\ge\eps,\ |\kappa_\eps(\xi)-\kappa(\xi)|\le\frac{\eps}{(\xi+1)^2}\text{ for all }\xi\ge 0 \text{ and }\kappa_\eps'(\xi)\ge 0\text{ for all }\xi\in(0,\eta)\text{ and }\eps\in(0,1)\nn\\
\eea
and
\be{2.12}
	\lball
	(f_\eps)_{\eps\in(0,1)}\subset C_0^\infty(\Om\times(0,\infty);\mathbb{R}^n)\qquad\text{and}\\[1mm]
	(g_\eps)_{\eps\in(0,1)}\subset C_0^\infty(\Om\times(0,\infty))\text{ with }g_\eps\ge 0\text{ for all }\eps\in(0,1),
	\ear
\ee
as well as
\be{2.13}
	\lball
	(v_{0\eps})_{\eps\in(0,1)}\subset C_0^\infty(\Om;\mathbb{R}^n),\\[1mm]
	(u_{0\eps})_{\eps\in(0,1)}\subset C_0^\infty(\Om;\mathbb{R}^n)\qquad\text{and}\\[1mm]
	(\Theta_{0\eps})_{\eps\in(0,1)}\subset C^\infty(\ov\Om)\quad\text{with }\na\Theta_{0\eps}\in C_0^\infty(\Om;\mathbb{R}^n)\text{ and }\Theta_{0\eps}\ge \eps\text{ for all }\eps\in(0,1),
	\ear
\ee
such that as $\eps\searrow 0$ we have
\bea{2.14}
f_\eps\to f\text{ in }L_{loc}^1([0,\infty);L^2(\Om;\mathbb{R}^n))\quad\text{ and }g_\eps\to g\text{ in }L_{loc}^1(\ov\Om\times[0,\infty))
\eea
and
\bea{2.15}
v_{0\eps}\to u_{0t}\text{ in }L^2(\Om),\quad u_{0\eps}\to u_0\text{ in }W^{1,2}(\Om)\quad\text{and}\quad \Theta_{0\eps}\to\Theta_0\text{ a.e.}
\eea
as well as
\bea{2.16}
K(\Theta_{0\eps})\to K(\Theta_0)\quad\text{and}\quad l(\Theta_{0\eps})\to l(\Theta_0)\qquad\text{in }L^1(\Om),
\eea
and that for each $\eps_0\in(0,1)$, the sets $(\kappa_\eps)_{\eps>\eps_0}$, $(f_\eps)_{\eps>\eps_0}$, $(g_\eps)_{\eps>\eps_0}$, $(v_{0\eps})_{\eps>\eps_0}$, $(u_{0\eps})_{\eps>\eps_0}$ and $(\Theta_{0\eps})_{\eps>\eps_0}$ are all finite. As shown in \cite[Lemma 3.3]{Win26}, for $m\ge 1$ with $m>\frac{n+2}{4}$ and $\eps\in(0,1)$, the parabolic regularization of \eqref{sys0} is given by
\be{sys1}
	\lball
	v_{\eps t}+\eps \Del^{2m}v_\eps=\text{div }(\mathbb{D}:\na^s v_\eps)+\text{div }(\mathbb{C}:\na^s u_\eps)-\text{div }(\Theta_\eps\mathbb{B})+f_\eps(x,t),\qquad& x\in\Om,\ t>0,\\[1mm]
	u_{\eps t}=v_\eps,\qquad& x\in\Om,\ t>0,\\[1mm]
	\kappa_\eps(\Theta_\eps)\Theta_{\eps t}=D\Del\Theta_\eps+\langle\mathbb{D}:\na^s v_\eps,\na^s v_\eps\rangle-\Theta_\eps\langle\mathbb{B},\na^s v_\eps\rangle+g_\eps(x,t),\qquad& x\in\Om,\ t>0,\\[1mm]
	v_\eps=0,\quad u_\eps=0,\quad \frac{\partial\Theta_\eps}{\partial\nu}=0,& x\in\partial\Om,\ t>0,\\[1mm]
	v_\eps(x,0)=v_{0\eps}(x),\quad u_\eps(x,0)=u_{0\eps}(x),\quad \Theta_\eps(x,0)=\Theta_{0\eps}(x), & x\in\Om
	\ear
\ee
and admits local classical solutions in the following sense. 
\begin{lem}\label{lem2.2}
Let $m\in\mathbb{N}$ satisfy $m>\frac{n+2}{4}$. For each $\eps\in(0,1)$ there exist
\be{2.18}
	\lball
	v_\eps\in C^0([0,\infty);W_0^{2m,2}(\Om,\mathbb{R}^n))\cap C^\infty(\ov\Om\times(0,\infty);\mathbb{R}^n),\\[1mm]
	u_\eps\in C^1([0,\infty);W_0^{2m,2}(\Om;\mathbb{R}^n))\cap C^\infty(\ov\Om\times(0,\infty);\mathbb{R}^n)\qquad\text{and}\\[1mm]
	\Theta_\eps\in C^{2,1}(\ov\Om\times[0,\infty))\cap C^\infty(\ov\Om\times(0,\infty))
	\ear
\ee
such that $\Theta_\eps>0$ in $\ov\Om\times[0,\infty)$ and that \eqref{sys1} is solved in the classical sense in $\Om\times(0,\infty)$.
\end{lem}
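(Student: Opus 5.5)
The statement just above is Lemma \ref{lem2.2}, the global classical solvability of the regularized problem \eqref{sys1}. I would obtain it from \cite[Lemma 3.3]{Win26}. Since that reference is quoted but not reproduced here, I sketch the standard route.

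\textbf{Step 1: local existence.} For fixed $\eps\in(0,1)$, write \eqref{sys1} as a fixed-point problem on a short interval $(0,T_0)$. Given $(\hat v,\hat\Theta)$ in a closed ball of $C^0([0,T_0];W_0^{2m-1,2}(\Om))\times C^0(\ov\Om\times[0,T_0])$, do the following.
\bit
\item Solve the linear higher-order parabolic equation for $v$, whose principal part is $\eps\Del^{2m}$, with right-hand side built from $u=u_{0\eps}+\int_0^t\hat v$ and $\hat\Theta$.
\item Solve the quasilinear heat equation for $\Theta$, writing it as $\Theta_t=\frac{D}{\kappa_\eps(\Theta)}\Del\Theta+\dots$.
\eit
Because $m>\frac{n+2}{4}$, Sobolev embedding gives $W^{2m,2}\hookrightarrow W^{1,\infty}$ in space-time. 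Hence $\na^s v_\eps$ is bounded and the source terms of the $\Theta$-equation are Hölder continuous. The uniform lower bound $\kappa_\eps\ge\eps$ from \eqref{kappa-bed} makes that equation uniformly parabolic. Banach's fixed-point theorem then gives a unique local solution on a maximal interval $(0,T_{max})$. It comes with the extensibility criterion that $\|v_\eps\|_{W^{2m,2}}+\|\Theta_\eps\|_{L^\infty}$ blows up if $T_{max}<\infty$. Parabolic Schauder bootstrapping yields $C^\infty$ regularity for $t>0$. Since $\na\Theta_{0\eps}$ has compact support, the compatibility condition holds, and this gives $C^{2,1}$ up to $t=0$.

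\textbf{Step 2: positivity.} With $h:=\langle\mathbb{B},\na^s v_\eps\rangle\in L^\infty$ locally, the function $\Theta_\eps$ satisfies $\kappa_\eps(\Theta_\eps)\Theta_{\eps t}\ge D\Del\Theta_\eps-\Theta_\eps h$. Comparison with $\eps e^{-Ct}$ on $[0,T]$ therefore gives $\Theta_\eps>0$.

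\textbf{Step 3: global extension.} This is the main obstacle: we must exclude $T_{max}<\infty$.
\bit
\item The energy identity, which uses the exact cancellation of $\int\Theta\langle\mathbb{B},\na^s v\rangle$ between the two equations, bounds $\int|v_\eps|^2$, $\int|\na^s u_\eps|^2$ and $\int K_\eps(\Theta_\eps)$.
\item It also controls $\eps\int_0^T\|\Del^m v_\eps\|_2^2$.
\item Testing the $v$-equation by $\Del^{2m}v_\eps$, or using maximal regularity of $\eps\Del^{2m}$, bounds $v_\eps$ in $L^\infty W^{2m,2}$. The only term needing care is $\na\Theta_\eps$. I would handle it by interpolation combined with an $L^p$ estimate for $\Theta_\eps$, obtained by testing the heat equation with powers of $\Theta_\eps$ and using $\kappa_\eps\ge\eps$.
\item Once $\na^s v_\eps\in L^\infty$, the comparison argument gives an $L^\infty$ bound on $\Theta_\eps$.
\eit
Together these bounds contradict the extensibility criterion, so $T_{max}=\infty$.
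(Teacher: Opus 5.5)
Your proposal matches what the paper does: the paper gives no independent argument for Lemma \ref{lem2.2} and takes it directly from \cite[Lemma 3.3]{Win26}, exactly as you propose, and your sketch (local fixed point using $\kappa_\eps\ge\eps$, positivity by comparison with $\eps e^{-Ct}$, and energy plus a priori bounds to exclude finite-time blow-up) is the standard route behind that citation. The one point to make explicit is in Step 3: the dissipated quantity $\eps\int_0^T\|\Del^m v_\eps\|_{L^2(\Om)}^2$, combined with $W^{2m,2}(\Om)\hookrightarrow W^{1,\infty}(\Om)$ (this is where $m>\frac{n+2}{4}$ enters), gives $\na^s v_\eps\in L^2((0,T);L^\infty(\Om))$, and it is this bound, not the not-yet-available $L^\infty W^{2m,2}$ bound, that closes your power-testing of the heat equation and the comparison bound for $\Theta_\eps$ without circularity.
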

The analysis carried out in \cite[Section 8]{Win26} yields the following convergence results for these solutions.
\begin{lem}
If $\liminf_{\xi\to\infty}\kappa(\xi)>0$, then there exist $(\eps_j)_{j\in\mathbb{N}}\subset(0,1)$ and a generalized solution $(u,\Theta)$ of \eqref{sys0}, in the sense of Definition \ref{DefWeak} i) such that $\eps_j\searrow 0$ as $j\to\infty$ and moreover
\begin{alignat}{2}
&v_\eps \rightharpoonup u_t \qquad &&\text{in }L^2(\Om\times(0,T); \mathbb{R}^n),\label{2.19}\\
&\na^s v_\eps \rightharpoonup \na^s u_t \qquad &&\text{in }L^1(\Om\times(0,T);\mathbb{R}^n),\label{2.20}\\
&u_\eps\to u\qquad &&\text{in }C^0([0,T];L^2(\Om;\mathbb{R}^n)),\label{2.21}\\
&u_\eps\rightharpoonup u\qquad &&\text{in }L^2((0,T);W^{1,2}(\Om;\mathbb{R}^n)),\label{2.22}\\
&\Theta_\eps\to\Theta\qquad &&\text{a.e. in }\Om\times(0,T),\label{2.23}\\
&\Theta_\eps \overset{*}{\rightharpoonup}\Theta+\mu\qquad &&\text{in }L^\infty((0,T);\mathcal{M}_+(\ov\Om))\qquad\text{and}\label{2.24}\\
&\na\phi_0(\Theta_\eps)\rightharpoonup\na\phi_0(\Theta)\qquad &&\text{in }L^2(\Om\times(0,T);\mathbb{R}^n)\label{2.25}
\end{alignat}
as $\eps=\eps_j\searrow0$.
\end{lem}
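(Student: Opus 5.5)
The plan is to reproduce, for the regularized problem \eqref{sys1}, the $\eps$-independent a priori estimates underlying \cite[Section 8]{Win26}. We then extract a subsequence by compactness and pass to the limit in the weak formulations, verifying \eqref{2.6}, \eqref{2.7} and \eqref{2.10}.

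\textbf{Energy estimate.} Testing the first equation in \eqref{sys1} by $v_\eps$ and adding the integrated third equation gives
\[
\frac{d}{dt}\Big\{\frac12\io|v_\eps|^2+\frac12\io\langle\mathbb{C}:\na^s u_\eps,\na^s u_\eps\rangle+\io K_\eps(\Theta_\eps)\Big\}+\eps\io|\Del^m v_\eps|^2=\io f_\eps\cdot v_\eps+\io g_\eps .
\]
Here $K_\eps(\xi):=\int_0^\xi\kappa_\eps$. Note that the dissipation and coupling terms $\langle\mathbb{D}:\na^s v_\eps,\na^s v_\eps\rangle$ and $\Theta_\eps\langle\mathbb{B},\na^s v_\eps\rangle$ cancel exactly, and the boundary terms vanish. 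A Gronwall argument in $\sqrt{\mathcal F}$ together with \eqref{t1.3}, \eqref{2.14} and \eqref{2.16} then yields the following bounds on $(0,T)$, uniformly in $\eps$:
\begin{itemize}
\item $v_\eps$ bounded in $L^\infty L^2$;
\item $u_\eps$ bounded in $L^\infty W^{1,2}_0$;
\item $K_\eps(\Theta_\eps)$ bounded in $L^\infty L^1$.
\end{itemize}
Since $\liminf_{\xi\to\infty}\kappa(\xi)>0$ and $|\kappa_\eps-\kappa|\le\eps$, $K_\eps$ grows at least linearly for large arguments, so $\Theta_\eps$ is bounded in $L^\infty((0,T);L^1(\Om))$.

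\textbf{Entropy-type estimates.} Next, multiply the temperature equation by $\phi'(\Theta_\eps)$ for concave $\phi$. First take $\phi=\ln$, i.e. use the entropy $l_\eps(\xi)=\int_1^\xi\kappa_\eps(\sigma)/\sigma\,d\sigma$. Then $\int l_\eps(\Theta_\eps)$ is controlled from above by $\int K_\eps(\Theta_\eps)+C$, and from below at $t=0$ via \eqref{2.16}. Using Young's inequality in the form $|\Theta\langle\mathbb B,\na^s v\rangle|/\Theta\le \delta\langle\mathbb D:\na^s v,\na^s v\rangle/\Theta+C_\delta\Theta$, this gives
\[
\int_0^T\!\!\io\frac{|\na\Theta_\eps|^2}{\Theta_\eps^2}+\int_0^T\!\!\io\frac{|\na^s v_\eps|^2}{\Theta_\eps}\le C(T).
\]
Second, test with $\phi(\xi)=\xi^r$ for $r\in(0,1)$. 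This bounds $\na\Theta_\eps^{r/2}$ in $L^2$. Combined with the $L^\infty L^1$ bound and Gagliardo--Nirenberg, we get $\Theta_\eps$ bounded in $L^q(\Om\times(0,T))$ for some $q>1$. Consequently:
\begin{itemize}
\item $\na^s v_\eps=\big(\na^s v_\eps/\sqrt{\Theta_\eps}\big)\sqrt{\Theta_\eps}$ is bounded in $L^{2q/(q+1)}$, hence equi-integrable, and Dunford--Pettis gives \eqref{2.20};
\item for compactly supported $\phi_0\in C^1$, $\na\phi_0(\Theta_\eps)=\phi_0'(\Theta_\eps)\Theta_\eps\,\na\ln\Theta_\eps$ is bounded in $L^2$, since $\xi\phi_0'(\xi)$ is bounded.
\end{itemize}

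\textbf{Compactness.} The bounds above give \eqref{2.19} and \eqref{2.22} along a subsequence. Aubin--Lions with $u_{\eps t}=v_\eps\in L^\infty L^2$ gives \eqref{2.21}. For \eqref{2.23}, pick a family $\phi_0$ of cutoffs. From the third equation, $\partial_t K^{(\phi)}_\eps(\Theta_\eps)$ is bounded in $L^1((0,T);(W^{k,2}(\Om))^\star)$ for large $k$; this works because $\phi'$ is compactly supported, so all right-hand terms are in $L^1$ by the above. Together with the gradient bound and Aubin--Lions, this gives strong $L^1$ convergence of these truncations. A diagonal argument over cutoffs, combined with the $L^1$ bound, then yields a.e. convergence of $\Theta_\eps$, and \eqref{2.25} follows. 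Banach--Alaoglu in $L^\infty_{w-*}((0,T);\mathcal M(\ov\Om))$ provides a weak-$*$ limit of $\Theta_\eps$. By Fatou, this limit is $\ge\Theta$, so it can be written as $\Theta+\mu$ with $\mu\ge0$; this is \eqref{2.24}.

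\textbf{Passage to the limit.} \eqref{2.6} follows directly, since the momentum equation is linear: the term $\Theta_\eps\mathbb B$ converges to $\Theta+\mu$ by \eqref{2.24}, and $\eps\Del^{2m}v_\eps\to0$ after testing, thanks to the bound on $\eps\int\!\!\int|\Del^m v_\eps|^2$. For \eqref{2.7}, use lower semicontinuity of the quadratic terms. For the thermal part, note that $K_\eps(\xi)\ge a\xi-C$ for large $\xi$ with $a$ close to $\liminf\kappa$. Splitting $K_\eps(\Theta_\eps)$ into a part on $\{\Theta_\eps\le M\}$, which converges by a.e. convergence, and a tail gives $\liminf\int K_\eps(\Theta_\eps)\ge\int K(\Theta)+a\int d\mu$. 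The term $\int\!\!\int f_\eps\cdot v_\eps$ converges by strong-times-weak convergence.

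\textbf{Main obstacle: the entropy inequality \eqref{2.10}.} The terms are handled as follows.
\begin{itemize}
\item $K^{(\phi)}_\eps(\Theta_\eps)$ is bounded and converges a.e., so it passes to the limit.
\item In the term $\Theta_\eps\phi'(\Theta_\eps)\langle\mathbb B,\na^s v_\eps\rangle$, the factor $\Theta_\eps\phi'(\Theta_\eps)$ is bounded and converges a.e., while $\na^s v_\eps$ converges weakly in $L^1$ (indeed in some $L^p$ with $p>1$). An Egorov-type argument gives convergence of the product.
\item For the gradient terms, write $\phi''(\Theta)|\na\Theta|^2=|\na\Psi(\Theta)|^2$ with $\Psi'=\sqrt{-\phi''}$. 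These terms, and the dissipation $\phi'(\Theta_\eps)\langle\mathbb D:\na^s v_\eps,\na^s v_\eps\rangle$, must be treated by weak lower semicontinuity.
\end{itemize}
I expect the dissipation term to be the main obstacle. The plan there is to show that $\sqrt{\phi'(\Theta_\eps)}\,\na^s v_\eps\rightharpoonup\sqrt{\phi'(\Theta)}\,\na^s u_t$ in $L^2$. The sequence is $L^2$-bounded because $\phi'(\xi)\le C/\xi$ near $0$ and bounded elsewhere, so it has a weak $L^2$ limit. That limit is identified through the $L^1$ convergence obtained by multiplying with the a.e.-convergent bounded factor. Convexity of $\mathbb A\mapsto\langle\mathbb D:\mathbb A,\mathbb A\rangle$ with $\varphi\ge0$ then gives the inequality in the correct direction.
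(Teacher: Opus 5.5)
The paper does not prove this lemma; it cites it from \cite[Section 8]{Win26}. Most of your architecture is the natural one: the energy identity, the $\ln$-entropy bound $\int_0^T\!\io(|\na\Theta_\eps|^2/\Theta_\eps^2+|\na^s v_\eps|^2/\Theta_\eps)\le C(T)$, compactness of truncations for \eqref{2.23}, and lower semicontinuity in \eqref{2.7} and \eqref{2.10}. The step that fails is your claim that testing with $\phi(\xi)=\xi^r$ bounds $\na\Theta_\eps^{r/2}$ in $L^2$, and hence $\Theta_\eps$ in $L^q(\Om\times(0,T))$ for some $q>1$. In that test the coupling term is $r\int_0^T\!\io\Theta_\eps^r\langle\mathbb{B},\na^s v_\eps\rangle$. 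Absorbing it into $r\int_0^T\!\io\Theta_\eps^{r-1}\langle\mathbb{D}:\na^s v_\eps,\na^s v_\eps\rangle$ leaves $\int_0^T\!\io\Theta_\eps^{1+r}$; integrating by parts instead leaves $\int_0^T\!\io\Theta_\eps^{r}|v_\eps|^2$. Neither is controlled by $\Theta_\eps\in L^\infty L^1$, $v_\eps\in L^\infty L^2$ and the entropy bound. Gagliardo--Nirenberg applied to $\na\Theta_\eps^{r/2}\in L^2$ would only give $\Theta_\eps\in L^{r+2/n}$, which is below $1+r$ when $n\ge 3$. There is also a structural reason to distrust the claim. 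Such a bound together with \eqref{2.23} gives $\Theta_\eps\to\Theta$ in $L^1(\Om\times(0,T))$ by Vitali, hence $\mu\equiv 0$ in \eqref{2.24}. You would then obtain integrable solutions under $\liminf_{\xi\to\infty}\kappa(\xi)>0$ alone, whereas Proposition \ref{theo1} needs (A2) for that, i.e.\ $\kappa\to\infty$ when $n\ge 3$. Your own careful treatment of $\mu$ in \eqref{2.7} would also become vacuous.

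This matters because your proof of \eqref{2.20} rests on it: you get equi-integrability from boundedness in $L^{2q/(q+1)}$. Through \eqref{2.20}, so do the $\mathbb{D}$-term in \eqref{2.6} and the coupling and dissipation limits in \eqref{2.10}. The bound $\int_0^T\!\io|\na^s v_\eps|\le C$ that the $\ln$-entropy actually gives is not enough for weak $L^1$ compactness, since it does not exclude concentration of $\na^s v_\eps$ where $\Theta_\eps$ concentrates.

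A fix that uses only the $L^1$ bound on $\Theta_\eps$ is a logarithmically refined entropy estimate. Test the temperature equation with $\phi'(\xi)=\frac{\ln(\xi+e)}{\xi}$ and set $L_\eps(\xi):=\int_1^\xi\kappa_\eps(\sigma)\phi'(\sigma)d\sigma$, so that $-2|l_\eps|\le L_\eps\le 2K_\eps$. Since $\mathbb{B}$ is constant and symmetric and $v_\eps=0$ on $\partial\Om$, the coupling term equals $\io\frac{v_\eps\cdot\mathbb{B}\na\Theta_\eps}{\Theta_\eps+e}$. It is absorbed by $-\phi''(\Theta_\eps)|\na\Theta_\eps|^2\ge\frac{\ln(\Theta_\eps+e)}{2\Theta_\eps^2}|\na\Theta_\eps|^2$ at the cost of only $C\io|v_\eps|^2$. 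This yields $\int_0^T\!\io\frac{\ln(\Theta_\eps+e)}{\Theta_\eps}|\na^s v_\eps|^2\le C(T)$. Hence $\int_E|\na^s v_\eps|\le C\big(\int_E\frac{\Theta_\eps}{\ln(\Theta_\eps+e)}\big)^{1/2}$, which is uniformly small as $|E|\to0$ by the $L^1$ bound alone. That is the equi-integrability behind \eqref{2.20}. With it, the rest of your argument goes through unchanged once you drop the parenthetical claim of convergence in some $L^p$ with $p>1$. This includes the Egorov-type products with weak $L^1$ limits and the identification of the weak $L^2$ limit of $\sqrt{\phi'(\Theta_\eps)}\na^s v_\eps$.
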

\begin{lem}
If $\kappa(\xi)\cdot\ln^{(3-n)_+}\xi\to\infty$ as $\xi\to\infty$, one can find $(\eps_j)_{j\in\mathbb{N}}\subset(0,1)$ as well as an integrable solution $(u,\Theta)$ of \eqref{sys0} in the sense of Definition \ref{DefWeak} ii) satisfying $\eps_j\searrow 0$ as $j\to\infty$ and \eqref{2.19}, \eqref{2.20}, \eqref{2.21}, \eqref{2.22}, \eqref{2.23}, \eqref{2.25} and
\bea{nn}
\Theta_\eps\to\Theta\qquad\text{in }L^1(\Om\times(0,T))\nn
\eea
as $\eps=\eps_j\searrow0$.
\end{lem}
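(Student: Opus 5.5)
The plan is to follow the compactness scheme of \cite[Section 8]{Win26}: derive $\eps$-independent estimates for the solutions $(u_\eps,v_\eps,\Theta_\eps)$ from Lemma \ref{lem2.2}, extract a subsequence, and show that the growth hypothesis on $\kappa$ makes $(\Theta_\eps)$ equi-integrable on $\Om\times(0,T)$. Equi-integrability gives $L^1$ convergence, and then the defect measure $\mu$ of the preceding lemma vanishes, so the limit is an integrable solution in the sense of Definition \ref{DefWeak} ii).

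\textbf{Step 1 (energy bounds).} Test the first equation in \eqref{sys1} with $v_\eps$ and integrate the third one. The viscous heating $\langle\mathbb{D}:\na^s v_\eps,\na^s v_\eps\rangle$ cancels, and so does the coupling term $\Theta_\eps\langle\mathbb{B},\na^s v_\eps\rangle$. Hence
\[
\frac{d}{dt}\Big\{\frac12\io|v_\eps|^2+\frac12\io\langle\mathbb{C}:\na^s u_\eps,\na^s u_\eps\rangle+\io K_\eps(\Theta_\eps)\Big\}+\eps\io|\Del^m v_\eps|^2=\io f_\eps\cdot v_\eps+\io g_\eps .
\]
With \eqref{2.14}--\eqref{2.16} and Gronwall, this bounds $v_\eps$ in $L^\infty((0,T);L^2)$, $u_\eps$ in $L^\infty((0,T);W^{1,2})$ (by \eqref{t1.3} and Korn), and $\io K(\Theta_\eps)$ uniformly in $t\in(0,T)$.

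\textbf{Step 2 (entropy bounds).} Next test the temperature equation with $-1/\Theta_\eps$. Using $l(\Theta_{0\eps})\to l(\Theta_0)$ in $L^1$ and Young's inequality on $\io\langle\mathbb{B},\na^s v_\eps\rangle$, this gives uniform bounds on
\[
\int_0^T\!\!\io\frac{|\na\Theta_\eps|^2}{\Theta_\eps^2}\qquad\text{and}\qquad\int_0^T\!\!\io\frac{|\na^s v_\eps|^2}{\Theta_\eps}.
\]
Testing instead with $\Theta_\eps^{-r}$ for $r\in(0,1)$ gives bounds on $\na\Theta_\eps^{(1-r)/2}$ in $L^2(\Om\times(0,T))$. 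Since $|\na^s v_\eps|\le(|\na^s v_\eps|^2/\Theta_\eps)^{1/2}\Theta_\eps^{1/2}$, the weak $L^1$ compactness of $\na^s v_\eps$ claimed in \eqref{2.20} follows once $(\Theta_\eps)$ is equi-integrable.

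\textbf{Step 3 (equi-integrability; the main obstacle).} For $n\ge3$ the hypothesis reads $\kappa(\xi)\to\infty$. Then $K$ is superlinear at infinity, and the bound on $\sup_t\io K(\Theta_\eps)$ already gives equi-integrability by de la Vallée Poussin. The delicate case is $n=2$, where only $\kappa(\xi)\ln\xi\to\infty$ is assumed, so $K(\xi)$ may grow merely like $o(1)\cdot\xi/\ln\xi$ times a diverging factor and need not be superlinear. Here I would combine the gradient bounds for $\Theta_\eps^{(1-r)/2}$ with the $L^\infty_tL^1_x$-type control from $K$ through a Gagliardo--Nirenberg (or Trudinger--Moser) interpolation in two dimensions. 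The aim is a space--time bound of the form $\int_0^T\io\Phi(\Theta_\eps)\le C$ with $\Phi$ superlinear; the logarithmic gain available in $n=2$ is exactly what compensates the factor $\ln^{(3-n)_+}\xi$.

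\textbf{Step 4 (compactness and passage to the limit).} Bound $\partial_t\phi_0(\Theta_\eps)$, or $\partial_t K_\eps(\Theta_\eps)$ truncated, in $L^1((0,T);(W^{k,2})^\star)$. Together with \eqref{2.25} and Aubin--Lions, this yields a.e. convergence \eqref{2.23} along a diagonal sequence $\eps_j\searrow0$. The remaining convergences \eqref{2.19}--\eqref{2.22} come from the bounds of Steps 1--2, using Aubin--Lions once more for $u_\eps$. Vitali's theorem together with Step 3 upgrades \eqref{2.23} to $\Theta_\eps\to\Theta$ in $L^1(\Om\times(0,T))$. Consequently the weak-$\star$ limit in \eqref{2.24} is $\Theta$ itself, that is, $\mu\equiv0$.

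Finally, passing to the limit in the weak form of the first equation in \eqref{sys1} gives \eqref{2.6} with $\mu\equiv0$. For the temperature, test the third equation in \eqref{sys1} with $\phi'(\Theta_\eps)\varphi$ for concave nondecreasing $\phi$. The term $\phi'(\Theta_\eps)\langle\mathbb{D}:\na^s v_\eps,\na^s v_\eps\rangle$ is handled by weak lower semicontinuity, and $-\phi''|\na\Theta_\eps|^2$ likewise, which yields the inequality \eqref{2.10}. The energy inequality \eqref{2.7} follows by lower semicontinuity from Step 1, and positivity of $\Theta$ is inherited from Proposition \ref{theo1}.
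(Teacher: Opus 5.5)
The paper gives no proof of this lemma; it imports it from \cite[Section 8]{Win26}, so your argument has to stand on its own. Several parts are a reasonable reconstruction of the standard compactness scheme: Step 1, the entropy bound in Step 2, Step 4, and Step 3 for $n\ge 3$. In that last case $\kappa\to\infty$ makes $K$ superlinear, so $\sup_t\io K(\Theta_\eps)\le C$ and de la Vall\'ee Poussin give equi-integrability, and Vitali then yields $L^1$ convergence and $\mu\equiv 0$. A minor point: $\io\langle\mathbb{B},\na^s v_\eps\rangle$ vanishes identically, since $\mathbb{B}$ is constant and $v_\eps=0$ on $\partial\Om$.

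The gap is Step 3 for $n=2$. It is the only place where $\kappa(\xi)\ln\xi\to\infty$ is actually used, and in your scheme \eqref{2.20}, the $L^1$ convergence and $\mu\equiv0$ all rest on it. What you write there restates the goal, and the two ingredients you plan to interpolate are not available.

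First, testing with $\Theta_\eps^{-r}$ does not give an $L^2$ bound for $\na\Theta_\eps^{(1-r)/2}$. The coupling term $\io\Theta_\eps^{1-r}\langle\mathbb{B},\na^s v_\eps\rangle$, absorbed by Young into the heating term, leaves $\int_0^T\io\Theta_\eps^{2-r}$. That quantity is superlinear and uncontrolled. Integrating by parts instead leaves $\int_0^T\io\Theta_\eps^{1-r}|v_\eps|^2$, which is equally uncontrolled since $v_\eps$ is only bounded in $L^\infty_tL^2_x$. The claimed bound also proves too much. Combined with $\sup_t\io\Theta_\eps\le C$, which holds whenever $\liminf_{\xi\to\infty}\kappa(\xi)>0$, Gagliardo--Nirenberg would bound $\Theta_\eps$ in $L^{1+\frac2n-r}(\Om\times(0,T))$ in every dimension. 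Every solution from Proposition \ref{theo1}(A1) would then be integrable, and the hypothesis of this lemma would be superfluous.

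Second, for $n=2$ the hypothesis allows sublinear $K$, e.g.\ $\kappa(\xi)=\frac{\ln\ln\xi}{\ln\xi}$ for large $\xi$. Then $\sup_t\io K(\Theta_\eps)\le C$ gives no $L^\infty_tL^1_x$ bound on $\Theta_\eps$ at all. This is consistent with Proposition \ref{theo1}(A2), which asserts $\Theta\in L^\infty_{loc}L^1$ only under the extra assumption $\liminf\kappa>0$.

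What is missing is an estimate in which the coupling term is harmless. One option is to test with $\phi'(\Theta_\eps)$ where $\xi\phi'(\xi)$ grows only logarithmically, e.g.\ $\phi'(\xi)=\frac{\ln\xi}{\xi}$ for large $\xi$. After integrating by parts, the coupling term is then bounded by $\|\na\ln\Theta_\eps\|_{L^2}\|v_\eps\|_{L^2}$ plus a fraction of the new dissipation. The resulting $L^2$ bounds on gradients of powers of $\ln\Theta_\eps$ must then be combined with the merely sublinear bound on $\sup_t\io K(\Theta_\eps)$, through a genuinely two-dimensional (Moser--Trudinger type) argument, into a superlinear space--time bound. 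As written, your proposal proves the lemma only for $n\ge3$.
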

The approximate solutions established in \cite{Win26} satisfy the following key properties (see \cite{Win26}, Lemma 3.2 and Lemma 9.2):
\begin{lem}\label{W2}
Assume $\kappa\in C^0([0,\infty))$ with $\kappa>0$ on $(0,\infty)$, and for $\eps\in(0,1)$ let
\bea{K_eps_def}
K_\eps(\xi):=\int_0^\xi \kappa_\eps(\sigma)d\sigma,\qquad\xi\ge 0.
\eea
Then
\bea{K_eps_abs}
|K_\eps(\xi)-K(\xi)|\le\eps\qquad\text{for all }\xi\ge 0.
\eea
\end{lem}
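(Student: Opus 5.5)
The inequality follows directly from the pointwise approximation property of the smoothed heat capacities in \eqref{kappa-bed}, namely $|\kappa_\eps(\sigma)-\kappa(\sigma)|\le\frac{\eps}{(\sigma+1)^2}$ for all $\sigma\ge 0$. The point is that this error decays fast enough in $\sigma$ to be integrable over all of $[0,\infty)$, so the bound will be uniform in $\xi$. No further structure of $\kappa$ is needed; the continuity of $\kappa$ and the smoothness of $\kappa_\eps$ only serve to make both primitives well defined.

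Concretely, fix $\eps\in(0,1)$ and $\xi\ge 0$. By the definition \eqref{K_eps_def} of $K_\eps$ and the definition \eqref{t1.8} of $K$, both are integrals over the same interval $[0,\xi]$. We write their difference as one integral and move the absolute value inside:
\bas
|K_\eps(\xi)-K(\xi)|
=\Big|\int_0^\xi\big(\kappa_\eps(\sigma)-\kappa(\sigma)\big)d\sigma\Big|
\le\int_0^\xi|\kappa_\eps(\sigma)-\kappa(\sigma)|\,d\sigma .
\eas
Next we insert the bound from \eqref{kappa-bed} under the integral. Since the resulting integrand is nonnegative, we may then enlarge the domain of integration from $[0,\xi]$ to $[0,\infty)$:
\bas
\int_0^\xi|\kappa_\eps(\sigma)-\kappa(\sigma)|\,d\sigma
\le\eps\int_0^\infty\frac{d\sigma}{(\sigma+1)^2}
=\eps\Big[-\frac{1}{\sigma+1}\Big]_0^\infty
=\eps .
\eas
Combining the two estimates gives $|K_\eps(\xi)-K(\xi)|\le\eps$ for every $\xi\ge 0$, which is \eqref{K_eps_abs}.

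There is no genuine obstacle in this argument. The only point worth noting is that the uniformity in $\xi$ depends on the specific decay rate $(\xi+1)^{-2}$ built into \eqref{kappa-bed}. A merely uniform estimate $|\kappa_\eps-\kappa|\le\eps$ would only give the weaker, $\xi$-dependent bound $|K_\eps(\xi)-K(\xi)|\le\eps\,\xi$.
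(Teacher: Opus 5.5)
Your proof is correct and complete: integrating the pointwise bound $|\kappa_\eps-\kappa|\le\eps(\sigma+1)^{-2}$ from \eqref{kappa-bed} over $[0,\xi]$ and using $\int_0^\infty(\sigma+1)^{-2}\,d\sigma=1$ gives the estimate uniformly in $\xi$. The paper does not prove this lemma in the text and only cites \cite{Win26} for it, so your direct argument is a natural way to fill in that reference.
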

\begin{lem}\label{W1}
There exist $b>0$, $\eps_0\in(0,1)$ and $(C(T))_{T>0}\subset(0,\infty)$ such that whenever $T>0$,
\bea{nn}
\big|\{\Theta_\eps(\cdot,t)>b\}\big|\ge\frac{1}{C(T)}\qquad\text{for all }t\in(0,T)\text{ and }\eps\in(0,\eps_0).\nn
\eea
\end{lem}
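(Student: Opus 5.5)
We plan to combine a monotone entropy-type functional with the energy bound. For $\xi>0$ set $l_\eps(\xi):=\int_1^\xi\frac{\kappa_\eps(\sigma)}{\sigma}d\sigma$. We test the third equation in \eqref{sys1} by $\frac{1}{\Theta_\eps}$, which is legitimate because $\Theta_\eps>0$ is smooth by Lemma \ref{lem2.2}. This gives
\bas
\frac{d}{dt}\io l_\eps(\Theta_\eps)=D\io\frac{|\na\Theta_\eps|^2}{\Theta_\eps^2}+\io\frac{\langle\mathbb{D}:\na^s v_\eps,\na^s v_\eps\rangle}{\Theta_\eps}-\io\langle\mathbb{B},\na^s v_\eps\rangle+\io\frac{g_\eps}{\Theta_\eps}.
\eas
Since $\mathbb{B}$ is constant and symmetric, $\langle\mathbb{B},\na^s v_\eps\rangle=\text{div}(\mathbb{B}v_\eps)$. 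Its integral therefore vanishes because $v_\eps=0$ on $\partial\Om$. All remaining terms are nonnegative, so $t\mapsto\io l_\eps(\Theta_\eps(\cdot,t))$ is nondecreasing. Using \eqref{kappa-bed} together with \eqref{2.16}, we obtain $\io l_\eps(\Theta_\eps(\cdot,t))\ge \io l(\Theta_0)-\gamma$ for all $t>0$ and all $\eps<\eps_0(\gamma)$, for any prescribed $\gamma>0$. Here $\eps_0$ also ensures that $l_\eps$ and $l$ differ by at most $\gamma/|\Om|$ on the relevant range. Near $\xi=0$ this uses $|\kappa_\eps-\kappa|\le\eps$, which we will need to check, possibly after replacing $l$ by $\int_{\xi_*}^\xi$ with a cut-off. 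Next, we will use the standard energy identity for \eqref{sys1}, obtained by testing the first equation with $v_\eps$ and integrating the third, combined with Gronwall's inequality to absorb the $f_\eps$-term. Together with Lemma \ref{W2}, this yields $\io K(\Theta_\eps(\cdot,t))\le C_1(T)$ for $t\in(0,T)$, uniformly in small $\eps$.

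We then convert these two bounds into a measure estimate. The key observation is that $\io l(\Theta_0)>|\Om|\,l(0+)$, where $l(0+)=\lim_{\xi\searrow0}l(\xi)\in[-\infty,0)$. Indeed, $l$ is strictly increasing, and $\Theta_0\not\equiv0$ gives $l(\Theta_0)>l(0+)$ on a set of positive measure. The case $l(0+)=-\infty$ is trivial since $l(\Theta_0)\in L^1$. Hence we can fix $b\in(0,1)$ and $\gamma>0$, both depending only on $\Theta_0$ and in particular independent of $T$, such that $l(b)|\Om|\le\io l(\Theta_0)-3\gamma$. For large $M>1$, monotonicity of $\kappa$ near $0$ is not needed here. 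We only use $K(\xi)\ge\int_M^\xi\kappa\ge M\int_M^\xi\frac{\kappa}{\sigma}$ for $\xi\ge M$, which gives $l(\xi)\le l(M)+\frac{K(\xi)}{M}$ for all $\xi>0$.

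Write $A:=\{\Theta_\eps(\cdot,t)>b\}$. Using $l(\Theta_\eps)\le l(b)$ off $A$, we get
\bas
\io l(\Theta_\eps)\le l(b)|\Om|+|A|\big(l(M)-l(b)\big)+\frac{C_1(T)}{M}.
\eas
Choosing $M=M(T)$ with $C_1(T)/M<\gamma$ and comparing with the lower bound $\io l(\Theta_0)-2\gamma$ (after replacing $l_\eps$ by $l$), we conclude that $|A|\ge \frac{\gamma}{l(M)-l(b)}=:\frac{1}{C(T)}$ for all $t\in(0,T)$ and $\eps<\eps_0$. This is the claim of Lemma \ref{W1}.

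The main obstacle is the first step: making the monotonicity of $\io l_\eps(\Theta_\eps)$ transfer uniformly to $l$. The difficulty arises when $l(0+)$ is finite but the difference between $l_\eps$ and $l$ near $0$ is only controlled via $\int_0^1\frac{|\kappa_\eps-\kappa|}{\sigma}$, which need not be small. If this fails, I would instead run the argument with the truncated functional $\xi\mapsto\int_1^{\max\{\xi,\delta\}}\frac{\kappa_\eps}{\sigma}$ for a small fixed $\delta<b$. Such a functional is still a supersolution-type quantity, because the truncation is concave-increasing in $\ln\xi$, and the $\delta$-error can be absorbed into $\gamma$.
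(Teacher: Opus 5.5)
Your overall route is sound: $\int_\Omega l_\varepsilon(\Theta_\varepsilon)$ is monotone (the coupling term integrates to zero, since $\langle\mathbb{B},\nabla^s v_\varepsilon\rangle=\mathrm{div}(\mathbb{B}v_\varepsilon)$ and $v_\varepsilon=0$ on $\partial\Omega$), $\int_\Omega K_\varepsilon(\Theta_\varepsilon)$ is bounded by the energy estimate, and you split at the levels $b$ and $M$. The paper gives no proof of its own and imports the statement from \cite{Win26}. The hypotheses $l(\Theta_0)\in L^1(\Omega)$ and \eqref{2.16} that it carries along are exactly what such an entropy argument needs.

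As written, however, your proof has a gap exactly where you suspect it. In the conversion step you evaluate $l$ instead of $l_\varepsilon$ at $\Theta_\varepsilon(\cdot,t)$. But \eqref{kappa-bed} only gives $l_\varepsilon(\xi)-l(\xi)\le\varepsilon\ln\frac1\xi$ for $\xi<1$, so this replacement costs up to $\varepsilon\int_\Omega\ln_+\frac{1}{\Theta_\varepsilon(\cdot,t)}$, and nothing in your argument controls that. Comparison with the subsolution $\frac{\varepsilon}{1+at}$ would bound it, but only for $\varepsilon<\varepsilon_0(T)$, whereas the lemma requires $\varepsilon_0$ independent of $T$.

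Your fallback does not close the gap either. For $\int_\Omega\int_1^{\max\{\Theta_\varepsilon,\delta\}}\frac{\kappa_\varepsilon(\sigma)}{\sigma}\,d\sigma$, two things go wrong:
\begin{itemize}
\item The coupling term becomes $-\int_{\{\Theta_\varepsilon>\delta\}}\langle\mathbb{B},\nabla^s v_\varepsilon\rangle$, which neither vanishes nor has a sign.
\item The weight $\chi_{\{\xi>\delta\}}/\xi$ multiplying the right-hand side of the $\Theta_\varepsilon$-equation jumps upward at $\xi=\delta$, so after integrating $D\Delta\Theta_\varepsilon$ by parts the diffusion term is no longer nonnegative. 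In the variable $\ln\xi$ this truncation has a convex kink, not a concave one.
\end{itemize}

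The missing observation is that you never need to leave $l_\varepsilon$ at positive times.

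At $t=0$ the replacement is harmless, because $\Theta_{0\varepsilon}\ge\varepsilon$ by \eqref{2.13}. From $|\kappa_\varepsilon(\xi)-\kappa(\xi)|\le\frac{\varepsilon}{(\xi+1)^2}$ you get $|l_\varepsilon(\xi)-l(\xi)|\le\varepsilon(1+\ln\frac1\varepsilon)$ for all $\xi\ge\varepsilon$. Hence $\int_\Omega l_\varepsilon(\Theta_{0\varepsilon})\to\int_\Omega l(\Theta_0)$ by \eqref{2.16}.

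At time $t$, do the splitting with $l_\varepsilon$ and $K_\varepsilon$ themselves. Since $l_\varepsilon$ is increasing and $l_\varepsilon(\xi)\le l_\varepsilon(M)+K_\varepsilon(\xi)/M$ for all $\xi>0$, with $A=\{\Theta_\varepsilon(\cdot,t)>b\}$ you get
\[
\int_\Omega l(\Theta_0)-\gamma\le\int_\Omega l_\varepsilon(\Theta_\varepsilon(\cdot,t))\le l_\varepsilon(b)|\Omega|+|A|\big(l_\varepsilon(M)-l_\varepsilon(b)\big)+\frac1M\int_\Omega K_\varepsilon(\Theta_\varepsilon(\cdot,t)).
\]
Now $l_\varepsilon$ is evaluated only at the fixed points $b$ and $M$, where $l_\varepsilon(b)\le l(b)+\varepsilon\ln\frac1b$ and $l_\varepsilon(M)-l_\varepsilon(b)\le l(M)-l(b)+\varepsilon\ln\frac Mb$. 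So $\varepsilon_0$ depends only on $b$ and $\gamma$, and the energy bound is used for $K_\varepsilon$ directly; Lemma \ref{W2} is not needed.

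One small slip in your arithmetic: $C_1(T)/M<\gamma$ only yields $|A|\,(l(M)-l(b))>\gamma-C_1(T)/M$. With the bookkeeping above, take $C_1(T)/M\le\frac\gamma2$ to obtain $|A|\ge\frac{\gamma}{2(l(M)-l(b)+\ln(M/b))}$.
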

\mysection{An approximate subsolution and mass estimates}
In this section we construct for each $\eps\in(0,1)$ a subsolution $\uTeps$ to the approximate system \eqref{sys1} and derive key mass and first positivity estimates. The main result is that a certain notion of mass persists for short times, and that positivity of $\uTeps$ can be established on a subdomain of positive measure, uniformly in $\eps$.
\subsection{Construction of the subsolution and basic properties}
Fix $T>0$ and $\delta>0$ (to be restricted to be sufficiently small at various points below, by introducing thresholds $\delta_0$ and $\delta_1$; see Lemma \ref{lem3.5} and Lemma \ref{lem4.6}). For arbitrary $t_0\in[0,T]$ and $\eps\in(0,1)$ we construct a subsolution to \eqref{sys1} as follows. Set $a:=\frac{|\mathbb{B}|^2}{4k_\mathbb{D}}$, where $\mathbb{B}$ and $\mathbb{D}$ are as in Proposition \ref{theo1} and $k_\mathbb{D}>0$ is the coercivity constant in \eqref{t1.3}. Consider
\be{sys3}
	\lball
	\kappa_\eps(\uT_\eps)\uT_{\eps t}=D\Del \uT_\eps-a\uT_\eps^2,\qquad& x\in\Om,\ t>t_0,\\[1mm]
	\frac{\partial\uT_\eps}{\partial\nu}=0,\qquad& x\in\partial\Om,\ t>t_0,\\[1mm]
	\uT_\eps(x,t_0):=\min\{\Theta_{\eps}(x,t_0),e^{-1},\delta\},\qquad& x\in\Om.
	\ear
\ee
We note that $\uTeps$ depends on both $t_0$ and $\delta$; for simplicity of notation, this dependence is suppressed. The following lemma ensures well-posedness of \eqref{sys3} and establishes that $\uTeps$ indeed is a subsolution to $\Theta_\varepsilon$ in \eqref{sys1}.
\begin{lem}
Let $t_0\in[0,T]$ and $\eps\in(0,1)$. Then \eqref{sys3} admits a unique classical solution
$$\underline{\Theta}_\varepsilon \in C^{2,1}(\overline{\Omega} \times [t_0,\infty)) \cap C^\infty(\overline{\Omega} \times (t_0,\infty))$$
with $\uTeps>0$ in $\ov\Om\times[t_0,\infty)$. Moreover, $\uTeps\le \Theta_\eps$ in $\ov\Om\times[t_0,\infty)$.
\end{lem}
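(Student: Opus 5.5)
The plan is to treat \eqref{sys3} as a nondegenerate quasilinear parabolic problem. We rewrite it as
$$\uT_{\eps t}=\frac{D\Del\uT_\eps-a\uT_\eps^2}{\kappa_\eps(\uT_\eps)},$$
where \eqref{kappa-bed} gives $\kappa_\eps\ge\eps>0$ and $\kappa_\eps\in C^\infty$. The approach is to obtain local classical solvability from standard quasilinear theory (e.g. Amann's theory or Ladyzhenskaya--Solonnikov--Ural'ceva). We then extend the solution globally through a priori upper and lower bounds, and finally prove $\uTeps\le\Theta_\eps$ by a comparison argument.

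\textbf{Step 1: regularity of the initial datum.} By Lemma \ref{lem2.2} the initial datum $\min\{\Theta_\eps(\cdot,t_0),e^{-1},\delta\}$ is continuous and Lipschitz on $\ov\Om$. It is also strictly positive, since $\Theta_\eps(\cdot,t_0)$ is continuous and positive on the compact set $\ov\Om$; call its minimum $y_0>0$. Because the datum is only Lipschitz, I would not expect $C^{2,1}$-regularity literally up to $t=t_0$ in general. I would therefore either mollify the datum, solve, and pass to the limit using the uniform bounds below, or interpret regularity at $t_0$ as continuity up to $t_0$ with smoothness for $t>t_0$. I flag this as a technical point to be checked against the intended meaning of the statement, not as a genuine obstacle.

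\textbf{Step 2: a priori bounds and global existence.} The upper bound comes from constants: every constant $M\ge\delta$ is a supersolution, because $\kappa_\eps(M)\cdot 0\ge D\Del M-aM^2=-aM^2$. Hence $\uTeps\le\delta$.

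For the lower bound, let $y=y(t)$ solve $\kappa_\eps(y)y'=-ay^2$ with $y(t_0)=y_0$. Since $\kappa_\eps\ge\eps$, this gives $y'\ge-\frac a\eps y^2$, and therefore
$$y(t)\ge\frac{y_0}{1+\frac{a y_0}{\eps}(t-t_0)}>0.$$
The function $y$ is a spatially homogeneous solution with the Neumann condition, and it lies below the datum at $t_0$. Comparison then yields $\uTeps\ge y>0$.

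With these bounds the operator is uniformly parabolic on the range of the solution, with smooth coefficients. Schauder estimates and the usual extensibility criterion then give a global classical solution. Uniqueness follows from the same comparison principle used in Step 3.

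\textbf{Step 3: comparison with $\Theta_\eps$.} We first check that $\Theta_\eps$ is a supersolution of \eqref{sys3}. By \eqref{t1.3} and Young's inequality,
$$\Theta_\eps\langle\mathbb B,\na^s v_\eps\rangle\le k_\mathbb D|\na^s v_\eps|^2+\frac{|\mathbb B|^2}{4k_\mathbb D}\Theta_\eps^2\le\langle\mathbb D:\na^s v_\eps,\na^s v_\eps\rangle+a\Theta_\eps^2.$$
Together with $g_\eps\ge0$ this gives
$$\kappa_\eps(\Theta_\eps)\Theta_{\eps t}\ge D\Del\Theta_\eps-a\Theta_\eps^2.$$
At $t_0$ we have $\uTeps(\cdot,t_0)\le\Theta_\eps(\cdot,t_0)$ by construction.

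To prove the comparison, fix $T'>t_0$. On $\ov\Om\times[t_0,T']$ both functions lie in a compact subset of $(0,\infty)$, so the map $F(\xi,p)=(Dp-a\xi^2)/\kappa_\eps(\xi)$ is Lipschitz in $\xi$ there, uniformly for bounded $p$. Set $w:=(\uTeps-\Theta_\eps)e^{-\lambda t}$ with $\lambda$ large and suppose $\max w>0$.

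At a first positive interior maximum we have $w_t\ge0$, $\Del\uTeps\le\Del\Theta_\eps$, and $\kappa_\eps>0$. Writing the difference of the two inequalities via the mean value theorem in $\xi$, we obtain a contradiction once $\lambda$ exceeds the Lipschitz constant of $F$. At boundary points the homogeneous Neumann condition combined with Hopf's lemma excludes a boundary maximum; alternatively, one can multiply by $(\uTeps-\Theta_\eps)_+$ after dividing by $\kappa_\eps$ and run a Gronwall argument.

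I expect this Step 3 comparison, in which $\Del$ is multiplied by the state-dependent factor $1/\kappa_\eps$, to be the main technical point. Since both functions are classical and bounded away from $0$ and $\infty$ on finite time intervals, the maximum-principle argument above should handle it.
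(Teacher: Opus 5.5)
Your proposal follows the paper's route. Existence and positivity come from standard parabolic theory, which applies because $\kappa_\eps\ge\eps$ makes the problem nondegenerate. The inequality $\uTeps\le\Theta_\eps$ comes from using \eqref{t1.3} and Young's inequality to show $\kappa_\eps(\Theta_\eps)\Theta_{\eps t}\ge D\Del\Theta_\eps-a\Theta_\eps^2$, then comparing. You add the a priori bounds and maximum-principle details that the paper calls ``standard''. Your supersolution computation is correctly written for $\Theta_\eps$; the paper's display has $\uT_\eps$ in its place, which is a typo.

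Your caveat in Step 1 is justified. The datum $\min\{\Theta_\eps(\cdot,t_0),e^{-1},\delta\}$ is in general only Lipschitz, so $C^{2,1}$ regularity up to $t=t_0$ cannot hold as stated. Continuity on $\ov\Om\times[t_0,\infty)$ together with smoothness for $t>t_0$ is all your comparison argument needs, and it is also all the paper's later arguments actually use.
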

\proof
Existence, uniqueness and positivity of $\uTeps$ follow from standard parabolic theory, noting that $\kappa_\eps>0$ allows one to rewrite \eqref{sys3} in non-divergence form. For the comparison, observe that $\uTeps(\cdot,t_0)\le \Theta_{\eps}(\cdot,t_0)$ in $\Om$ by definition. Moreover, using \eqref{2.12} and $k_\mathbb{D}>0$ as in \eqref{t1.3}, Young's inequality yields
\bea{e1}
\kappa_\eps(\uT_\eps)\uT_{\eps t}&=&D\Del\uT_\eps+\langle\mathbb{D}:\na^s v_\eps,\na^s v_\eps\rangle-\uT_\eps\langle\mathbb{B},\na^s v_\eps\rangle+g_\eps(x,t)\nn\\
&\ge&D\Del\uT_\eps+k_\mathbb{D}|\na^s v_\eps|^2-|\mathbb{B}|\uTeps|\na^s v_\eps|\nn\\
&\ge&D\Del\uT_\eps+k_\mathbb{D}|\na^s v_\eps|^2-k_\mathbb{D}|\na^s v_\eps|^2-\frac{|\mathbb{B}|^2}{4k_\mathbb{D}}\uT_\eps^2\nn\\
&=&D\Del\uT_\eps-a\uT_\eps^2\qquad\text{in }\Om\times(t_0,\infty),\nn
\eea
for all $\eps\in(0,1)$. Thus a simple comparison argument completes the proof.
\qed
As a direct consequence of the structure of \eqref{sys3}, the subsolution $\uTeps$ inherits the upper bound present in the initial data.
\begin{lem}\label{lemThetaBounded}
Let $t_0\in[0,T]$ and $\eps\in(0,1)$, then 
$$\uTeps\leq \min\{e^{-1},\delta\}\qquad\text{in }\Omega\times(t_0,\infty).$$
\end{lem}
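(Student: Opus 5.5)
This follows from the parabolic comparison principle applied to \eqref{sys3}, comparing $\uTeps$ with the constant $M:=\min\{e^{-1},\delta\}$. By definition of the initial datum in \eqref{sys3}, $\uTeps(\cdot,t_0)=\min\{\Theta_\eps(\cdot,t_0),e^{-1},\delta\}\le M$ in $\Om$. The constant function $\ov\Theta\equiv M$ satisfies $\kappa_\eps(\ov\Theta)\ov\Theta_t=0\ge -aM^2=D\Del\ov\Theta-a\ov\Theta^2$ and $\frac{\partial\ov\Theta}{\partial\nu}=0$ on $\partial\Om$. It is therefore a supersolution of the problem solved by $\uTeps$.

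To make the comparison rigorous without worrying about the quasilinear factor $\kappa_\eps(\uTeps)$, I would use the lower bound $\kappa_\eps\ge\eps>0$ from \eqref{kappa-bed} and divide by it. This shows that $\uTeps$ is a classical solution of
$$\uT_{\eps t}=\frac{D}{\kappa_\eps(\uTeps)}\Del\uTeps-\frac{a}{\kappa_\eps(\uTeps)}\uTeps^2\le\frac{D}{\kappa_\eps(\uTeps)}\Del\uTeps\qquad\text{in }\Om\times(t_0,\infty),$$
where the inequality uses $a\ge0$. On any finite time interval $[t_0,T']$, continuity of $\uTeps$ on $\ov\Om\times[t_0,T']$ gives a bounded coefficient $d(x,t):=D/\kappa_\eps(\uTeps(x,t))\in(0,D/\eps]$. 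Hence $\uTeps$ is a subsolution of the linear equation $w_t=d(x,t)\Del w$ with homogeneous Neumann data, while constants solve that equation exactly.

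The weak maximum principle for $w:=\uTeps-M$ then gives the claim. Suppose $\max_{\ov\Om\times[t_0,T']}(\uTeps-M)e^{-t}>0$, and let the maximum be attained at a point $(x_1,t_1)$ with $t_1>t_0$. If $x_1\in\Om$, then $\Del\uTeps\le0$ and $\partial_t(\uTeps e^{-t})\ge0$ at that point. If $x_1\in\partial\Om$, the Neumann condition combined with the Hopf lemma excludes it, or alternatively one perturbs by a small multiple of a function with positive outer normal derivative. Either case gives a contradiction. Letting $T'\to\infty$ yields $\uTeps\le M$ in $\Om\times(t_0,\infty)$.

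Equivalently, one can test with $(\uTeps-M)_+$: multiplying the divided equation by $\kappa_\eps$, the identity $\frac{d}{dt}\io\int_M^{\uTeps}\kappa_\eps(s)(s-M)_+\,ds=-D\io|\na(\uTeps-M)_+|^2-a\io\uTeps^2(\uTeps-M)_+\le0$ together with zero initial value gives the claim directly. Both routes are standard, and the only point needing care is the boundary handling, which the Neumann condition makes routine.
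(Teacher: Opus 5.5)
Your proposal is correct and follows the paper's argument: the constant $M=\min\{e^{-1},\delta\}$ dominates the initial datum of \eqref{sys3} and satisfies $\kappa_\eps(M)M_t-D\Del M+aM^2=aM^2\ge0$, so comparison gives $\uTeps\le M$. Your extra justification of the comparison step, via dividing by $\kappa_\eps\ge\eps$ and using the maximum principle or via the energy identity for $(\uTeps-M)_+$, only spells out what the paper calls standard. One slip to fix: at a positive maximum of $(\uTeps-M)e^{-t}$ the correct information is $\partial_t\big((\uTeps-M)e^{-t}\big)\ge0$, not $\partial_t(\uTeps e^{-t})\ge0$. This gives $\uT_{\eps t}\ge\uTeps-M>0$, which contradicts $\uT_{\eps t}\le\frac{D}{\kappa_\eps(\uTeps)}\Del\uTeps\le0$.
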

\proof
The constant function $\omega\equiv \min\{e^{-1},\delta\}$ serves as a supersolution for \eqref{sys3}, since $\kappa_\eps(\omega)\omega_t-D\Del \omega+a\omega^2=a\ge 0$ for all $\eps\in(0,1)$.
\qed
\subsection{Positivity on a subdomain}
We first show that for short time scales, the mass of $\uTeps$ does not disappear completly. This will allow us to establish positivity of $\uTeps$ for all $\eps\in(0,1)$ on a suitable subdomain.
\begin{lem}
For $\eps\in(0,1)$, let $K_\eps$ as in \eqref{K_eps_def}. Then there exists $\tau_0>0$ such that for all $t_0\in[0,T]$
\bea{p3.1}
\io K_\eps(\uTeps(\cdot,t))\ge \frac{1}{2}\inf_{\eps'\in(0,1)}\io K_{\eps'}(\underline{\Theta}_{\eps'}(\cdot,t_0))\quad\text{for all }t\in(t_0,t_0+\tau_0),
\eea
where, by Lemma \ref{W1} and \eqref{kappa-bed}, the infimum on the right-hand side is positive.
\end{lem}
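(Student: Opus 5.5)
We will test the subsolution equation \eqref{sys3} against the constant function $1$ and use $K_\eps'=\kappa_\eps$. The Neumann condition removes the diffusion term, so
$$\frac{d}{dt}\io K_\eps(\uTeps)=\io\kappa_\eps(\uTeps)\uT_{\eps t}=D\io\Del\uTeps-a\io\uTeps^2=-a\io\uTeps^2\qquad\text{for }t>t_0.$$
Lemma \ref{lemThetaBounded} gives $\uTeps\le\min\{e^{-1},\delta\}\le 1$, hence $\io\uTeps^2\le|\Om|$. Integrating in time, for all $t>t_0$,
$$\io K_\eps(\uTeps(\cdot,t))\ge\io K_\eps(\uTeps(\cdot,t_0))-a|\Om|(t-t_0)\ge m_0-a|\Om|(t-t_0),$$
where $m_0:=\inf_{\eps'\in(0,1)}\io K_{\eps'}(\underline{\Theta}_{\eps'}(\cdot,t_0))$. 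If we then choose $\tau_0\le m_0/(2a|\Om|)$, the claim \eqref{p3.1} follows.

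The only delicate point is that $\tau_0$ must not depend on $t_0\in[0,T]$. So we need a lower bound $m_0\ge m_*>0$ that is uniform in $t_0\in[0,T]$, and we then set $\tau_0:=m_*/(2a|\Om|)$, or $\tau_0:=1$ in the degenerate case $a=0$, which occurs when $\mathbb{B}=0$.

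To get $m_*$, we invoke Lemma \ref{W1}, but with horizon $T+1$ so that $t_0=T$ is covered. It yields $b>0$ and $\eps_0$ such that $|\{\Theta_{\eps'}(\cdot,t_0)>b\}|\ge 1/C(T+1)$ for all $\eps'<\eps_0$ and all $t_0$. On this set,
$$\underline{\Theta}_{\eps'}(\cdot,t_0)\ge \beta:=\min\{b,e^{-1},\delta\}.$$
Since $K_{\eps'}$ is nondecreasing, this gives
$$\io K_{\eps'}(\underline{\Theta}_{\eps'}(\cdot,t_0))\ge\frac{K_{\eps'}(\beta)}{C(T+1)}\ge\frac{K(\beta)-\eps'}{C(T+1)}$$
by Lemma \ref{W2}. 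The value $K(\beta)$ is positive because $\kappa>0$ on $(0,\infty)$. The right-hand side is therefore bounded below by $K(\beta)/(2C(T+1))$ once $\eps'<\min\{\eps_0,K(\beta)/2\}$.

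The remaining $\eps'$ form a finite family, by the finiteness property of the approximating sets stated after \eqref{2.16}. For each of these, $\underline{\Theta}_{\eps'}(\cdot,t_0)$ is positive and continuous in $(x,t_0)$ on the compact set $\ov\Om\times[0,T]$, because $\Theta_{\eps'}$ is positive and continuous there. Hence $\io K_{\eps'}(\underline{\Theta}_{\eps'}(\cdot,t_0))$ is bounded below by a positive constant uniformly in $t_0$. Taking $m_*$ as the minimum of these finitely many bounds together with $K(\beta)/(2C(T+1))$ gives the uniform lower bound we need.

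This uniformity in both $t_0$ and $\eps$ is the main obstacle. The energy computation itself is routine.
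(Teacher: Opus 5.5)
Your proposal is correct and follows the paper's own argument. Both use the identity $\frac{d}{dt}\io K_\eps(\uTeps)=-a\io\uTeps^2$, the pointwise bound from Lemma \ref{lemThetaBounded} (the paper uses $\uTeps^2\le e^{-2}$ where you use $1$), and a choice of $\tau_0$ proportional to the infimum.

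Your extra step, showing that this infimum is bounded below uniformly in $t_0\in[0,T]$, is genuinely needed for a $t_0$-independent $\tau_0$. The paper only asserts it: its $\tau_0=\frac{c_2e^2}{2a|\Om|}$ is built from a $c_2$ that depends on $t_0$.

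One small fix is needed. Lemma \ref{W1} is stated for $t$ in the open interval $(0,T)$, so enlarging the horizon covers $t_0=T$ but not $t_0=0$. That endpoint follows from continuity of $\Theta_{\eps'}$ at $t=0$, e.g.\ $|\{\Theta_{\eps'}(\cdot,0)\ge b\}|\ge\limsup_{t\searrow0}|\{\Theta_{\eps'}(\cdot,t)>b\}|$.
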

\proof
From the definition of $K_\eps$ and the subsolution equation \eqref{sys3}, we have for all $\varepsilon \in (0,1)$,
\bea{e.p3.2}
\frac{d}{dt}\io K_\eps(\uTeps)=\io \kappa_\eps(\uTeps)\uT_{\eps t}= -a\io \uTeps^2\qquad\text{for all }t>t_0.\nn
\eea
Integrating over time and using Lemma \ref{lemThetaBounded} yield
\bea{e.p3.3}
\io K_\eps(\uTeps(\cdot,t_0+t))&=&\io K_\eps(\uTeps(\cdot,t_0)-a\int_{t_0}^{t_0+t}\io \uTeps^2(\cdot,s)ds\nn\\
&\ge&\io K_\eps(\uTeps(\cdot,t_0)- a t|\Om|e^{-2}\qquad\text{for all }t\in(0,1)\text{ and }\eps\in(0,1).\nn
\eea
Let $c_2:=\inf_{\eps'\in(0,1)}\io K_{\eps'}(\underline{\Theta}_{\eps'}(\cdot,t_0)$. Setting $\tau_0:=\frac{c_2 e^2}{2a|\Om|},$ we obtain
$$\io K_\eps(\uTeps(\cdot,t))\ge\frac{c_2}{2}\qquad\text{for all }t\in(t_0,t_0+\tau_0)\text{ and }\eps\in(0,1),$$
which yields \eqref{p3.1}.
\qed
We now establish that the mass persistence from the above lemma implies that $\uTeps$ is bounded away from zero on a set of positive measure.
\begin{lem}\label{lem3.4}
Let $\tau_0>0$ be as in the lemma above. Then there exist constants $C_1>0$, $C_2>0$ and $\eps_1\in(0,1)$ such that for each $\eps\in(0,\eps_1)$, any $t_0\in[0,T]$ and arbitrary $t\in(t_0,t_0+\tau_0)$, we can find a measurable set $E_\eps(t)\subset\Om$ with
\bea{p4.1-1}
\int_{E_{\eps}(t)}K_\eps(\uTeps(\cdot,t))\geq\frac{1}{4}\inf_{\eps'\in(0,1)}\io K_{\eps'}(\underline{\Theta}_{\eps'}(\cdot,t_0)\qquad\text{for all }\eps\in(0,\eps_1),
\eea
$$|E_\eps(t)|\ge C_1\qquad\text{for all }\eps\in(0,\eps_1),$$
and
\bea{p4.1}
\uTeps(x,t)\ge C_2\qquad \text{for all }x\in E_\eps(t)\text{ and }\eps\in(0,\eps_1).
\eea
\end{lem}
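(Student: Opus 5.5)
We argue by a level-set (Chebyshev-type) splitting of the mass from the preceding lemma. Write $c_2:=\inf_{\eps'\in(0,1)}\io K_{\eps'}(\underline{\Theta}_{\eps'}(\cdot,t_0))>0$. The constant $C_2$ should be chosen independent of $t_0$ as well; we return to this at the end. For $\eps\in(0,1)$, $t\in(t_0,t_0+\tau_0)$ and a threshold $\gamma>0$ to be fixed, set
$$E_\eps(t):=\{x\in\Om:\ \uTeps(x,t)\ge\gamma\},$$
so that \eqref{p4.1} holds with $C_2:=\gamma$ by construction. By \eqref{p3.1},
$$\frac{c_2}{2}\le\int_{E_\eps(t)}K_\eps(\uTeps(\cdot,t))+\int_{\Om\setminus E_\eps(t)}K_\eps(\uTeps(\cdot,t)).$$
On $\Om\setminus E_\eps(t)$ we have $\uTeps<\gamma$. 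Since $K_\eps$ is nondecreasing ($\kappa_\eps\ge\eps>0$), Lemma \ref{W2} gives $K_\eps(\uTeps)\le K_\eps(\gamma)\le K(\gamma)+\eps$. The second integral is therefore at most $|\Om|(K(\gamma)+\eps)$.

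By continuity of $\kappa$ and $K(0)=0$, we can fix $\gamma>0$ so small that $|\Om|K(\gamma)\le c_2/8$. We then fix $\eps_1\in(0,1)$ with $|\Om|\eps_1\le c_2/8$. For $\eps<\eps_1$ the complementary integral is then at most $c_2/4$, and \eqref{p4.1-1} follows.

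For the lower bound on $|E_\eps(t)|$, Lemma \ref{lemThetaBounded} gives $\uTeps\le e^{-1}$. Hence on $E_\eps(t)$, again by Lemma \ref{W2},
$$K_\eps(\uTeps)\le K(e^{-1})+\eps\le K(e^{-1})+1=:M.$$
Combining this with \eqref{p4.1-1} gives $\frac{c_2}{4}\le M|E_\eps(t)|$, so $C_1:=\frac{c_2}{4M}$ works.

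The one delicate point is uniformity in $t_0\in[0,T]$. The constants $\gamma$, $\eps_1$, $C_1$ depend on $c_2$, and $c_2$ depends on $t_0$ through the initial data $\min\{\Theta_\eps(\cdot,t_0),e^{-1},\delta\}$. We would remove this dependence using Lemma \ref{W1}. For $\eps<\eps_0$, the set $\{\Theta_\eps(\cdot,t_0)>b\}$ has measure at least $1/C(T)$, and on it $\uTeps(\cdot,t_0)\ge\min\{b,e^{-1},\delta\}$. Using $|K_\eps-K|\le\eps$ and $K>0$ on $(0,\infty)$, this yields a lower bound
$$c_2\ge\frac{K(\min\{b,e^{-1},\delta\})}{C(T)}-\eps|\Om|,$$
which is uniform in $t_0$ for $\eps$ small. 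The infimum over all $\eps'\in(0,1)$ also involves the finitely many larger $\eps'$ (cf. the finiteness property of the approximating families), each of which contributes a positive quantity. Alternatively, one can replace $c_2$ throughout by the explicit uniform lower bound above, restricting $\eps_1\le\eps_0$ as needed. With this uniform positive replacement for $c_2$, the argument gives constants $C_1$, $C_2$ and $\eps_1$ that are independent of $t_0$, $t$ and $\eps$.
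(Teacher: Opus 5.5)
Your proof is correct and is essentially the paper's argument. You split the mass bound from the preceding lemma along the level set $\{\uTeps(\cdot,t)\ge\gamma\}$, bound the sublevel part by $|\Om|(K(\gamma)+\eps)$ using Lemma~\ref{W2} and $K(0)=0$, and obtain $|E_\eps(t)|\ge C_1$ from the upper bound $\uTeps\le e^{-1}$ of Lemma~\ref{lemThetaBounded}. Your closing discussion of uniformity in $t_0$ via Lemma~\ref{W1} spells out what the paper leaves implicit in its remark that the infimum is positive; for the large-$\eps'$ range you rely on the same finiteness argument the paper uses in Lemma~\ref{lem3.5}.
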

\proof
By \eqref{p3.1} we have
\bea{e.p4.1}
\io K_\eps(\uTeps(\cdot,t))\ge c_1:=\frac{1}{2}\inf_{\eps'\in(0,1)}\io K_{\eps'}(\underline{\Theta}_{\eps'}(\cdot,t_0)\qquad\text{for all }t\in(t_0,t_0+\tau_0)\text{ and }\eps\in(0,1).
\eea
Since \eqref{K_eps_abs} holds and $K(0)=0$, we can choose $\eps_1\in(0,1)$ small enough so that there exists $c_2>0$ with $|\Om|\cdot K_\eps(c_2)<\frac{c_1}{2}$ for all $\eps\in(0,\eps_1)$. For fixed $t\in(t_0,t_0+\tau_0)$, define
$$E_\eps(t):=\big\{x\in\Om\ \big|\ \uTeps(x,t)\ge c_2\big\}\qquad\text{for all }\eps\in(0,\eps_1).$$
Splitting the integral yields
$$c_1\le \io K_\eps(\uTeps(\cdot,t))\le \int_{E_\eps(t)}K_\eps(\uTeps(\cdot,t))+|\Om|\cdot K_\eps(c_2)\le\int_{E_\eps(t)}K_\eps(\uTeps(\cdot,t))+\frac{c_1}{2},$$
for all $t\in(t_0,t_0+\tau_0)$ and $\eps\in(0,\eps_1)$. Thus, $\int_{E_\eps(t)} K_\eps(\uTeps(\cdot,t))\ge\frac{c_1}{2}$ for all $t\in(t_0,t_0+\tau_0)$ and $\eps\in(0,\eps_1)$. Together with Lemma \ref{lemThetaBounded} we finish the proof with some $C_1>0$ small enough and $C_2:=c_2$. 
\qed
In the case where \eqref{A2} holds, the lower bound \eqref{p4.1} will be sufficient for our purposes. If instead \eqref{A1} holds, we will later need to choose $\delta>0$ arbitrarily small. To this end, we first record the following technical properties of $\kappa_\eps$.
\begin{lem}
Assume that \eqref{A1} holds with some $\alpha \geq 0$ on $(0,\eta)$, for some $\eta>0$. Then there exists a constant $C_1 > 0$ independent of $\delta$ such that
\bea{kappa-eps-bed}
C_1\Big\{\xi+\eps^\frac{1}{\alpha}\Big\}^\alpha\ge\kappa_\eps(\xi)\ge \frac{1}{C_1}\Big\{\xi+\eps^\frac{1}{\alpha}\Big\}^\alpha\qquad\text{for all }\xi\in(0,\eta)\text{ and }\eps\in(0,1).
\eea
If \eqref{A2} holds in place of \eqref{A1}, then there exists $C_2>0$ such that
\bea{kappa-eps-bed2}
\kappa_\eps(\xi)\ge \frac{1}{C_2}(\xi+\eps)\qquad\text{for all }\xi\in(0,\eta)\text{ and }\eps\in(0,1).
\eea
\end{lem}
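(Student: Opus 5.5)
The plan is a direct elementary verification based only on the two properties of $\kappa_\eps$ in \eqref{kappa-bed}: the error estimate
\[
|\kappa_\eps(\xi)-\kappa(\xi)|\le \frac{\eps}{(\xi+1)^2}\le \eps ,
\]
and the lower bound $\kappa_\eps\ge\eps$. These are combined with \eqref{A1}, respectively \eqref{A2}, on $(0,\eta)$. No constant will depend on $\delta$, since $\delta$ never enters. For $\alpha=0$ I read $\eps^{1/\alpha}$ as $0$, its limit as $\alpha\searrow0$ for $\eps<1$. Then \eqref{kappa-eps-bed} just says that $\kappa_\eps$ lies between two positive constants on $(0,\eta)$, and this case is handled by the same argument as below.

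\emph{Upper bound in \eqref{kappa-eps-bed}.} For $\xi\in(0,\eta)$, the error estimate and \eqref{A1} give
\[
\kappa_\eps(\xi)\le \kappa(\xi)+\eps\le \frac{1}{c_\kappa}\xi^\alpha+\eps .
\]
Both $\xi^\alpha$ and $\eps=(\eps^{1/\alpha})^\alpha$ are at most $(\xi+\eps^{1/\alpha})^\alpha$. Hence $\kappa_\eps(\xi)\le (\frac{1}{c_\kappa}+1)(\xi+\eps^{1/\alpha})^\alpha$. For $\alpha=0$ the same chain gives the constant bound $\frac{1}{c_\kappa}+1$.

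\emph{Lower bound in \eqref{kappa-eps-bed}.} Start from
\[
(\xi+\eps^{1/\alpha})^\alpha\le 2^{\alpha}\bigl(\xi^\alpha+\eps\bigr),
\]
which holds because $(p+q)^\alpha\le(2\max\{p,q\})^\alpha$. It therefore suffices to show $\kappa_\eps(\xi)\ge c'(\xi^\alpha+\eps)$ with some $c'>0$. We always have the two lower bounds $\kappa_\eps\ge\eps$ and $\kappa_\eps\ge\kappa-\eps\ge c_\kappa\xi^\alpha-\eps$. Split into two cases.
\begin{itemize}
\item If $c_\kappa\xi^\alpha\ge 2\eps$, then $\kappa_\eps\ge \frac{c_\kappa}{2}\xi^\alpha$ and $\kappa_\eps\ge\eps$. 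Averaging the two gives $\kappa_\eps\ge \frac12\min\{\frac{c_\kappa}{2},1\}(\xi^\alpha+\eps)$.
\item If $c_\kappa\xi^\alpha<2\eps$, then $\xi^\alpha+\eps< (\frac{2}{c_\kappa}+1)\eps\le(\frac{2}{c_\kappa}+1)\kappa_\eps$.
\end{itemize}
Taking $C_1$ as the maximum of the resulting constants (times $2^\alpha$) yields \eqref{kappa-eps-bed}. For $\alpha=0$ the identical case split, with threshold $c_\kappa\ge2\eps$, gives $\kappa_\eps\ge\min\{\frac{c_\kappa}{2},\frac{c_\kappa}{2+c_\kappa}\}$. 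This uses $\eps\ge \frac{c_\kappa}{2}\cdot\frac{2}{2+c_\kappa}$ in the second case.

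\emph{Case \eqref{A2}.} The argument for \eqref{kappa-eps-bed2} is the same with $\alpha=1$ and only the lower bound. From $\kappa_\eps\ge\max\{\eps,\,c_\kappa\xi-\eps\}$, split according to whether $c_\kappa\xi\ge2\eps$ or not. This gives $\kappa_\eps(\xi)\ge \frac{1}{C_2}(\xi+\eps)$ with, for instance,
\[
C_2:=\max\Bigl\{2\max\bigl\{\tfrac{2}{c_\kappa},1\bigr\},\ \tfrac{2}{c_\kappa}+1\Bigr\}.
\]

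There is no real obstacle here. The only point needing care is the lower bound near $\xi=0$, where the bound $\kappa-\eps$ becomes useless. This is exactly why the regularization floor $\kappa_\eps\ge\eps$ is used, and why the natural scale in \eqref{kappa-eps-bed} is $\xi+\eps^{1/\alpha}$.
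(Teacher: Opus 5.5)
Your proof is correct and takes essentially the paper's route. The upper bound comes from $\kappa_\eps\le\kappa+\eps$ together with \eqref{A1}, the lower bound from $\kappa_\eps\ge\max\{\kappa-\eps,\eps\}$, and both are finished by comparing $\xi^\alpha+\eps$ with $(\xi+\eps^{1/\alpha})^\alpha$.

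The differences are cosmetic. The paper replaces your case split by the one-line estimate $3\kappa_\eps\ge(c_\kappa\xi^\alpha-\eps)+2\eps$, and then uses the convexity bound $(a+b)^\alpha\le 2^{\alpha-1}(a^\alpha+b^\alpha)$. That bound, like the paper's upper-bound step, tacitly requires $\alpha\ge 1$ with the constants as written. Your monotonicity and $2^\alpha\max$ estimates hold for every $\alpha\ge 0$ and need no normalization $c_\kappa\le 1$.
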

\proof
From \eqref{kappa-bed} we have $|\kappa_\eps(\xi)-\kappa(\xi)|\le \eps$ for all $\xi\ge 0$ and $\eps\in(0,1)$. For the upper bound, recalling $c_\kappa\in(0,1]$ from \eqref{A1}, we obtain
\bea{k-b-d-1}
\kappa_\eps(\xi)\le \kappa(\xi)+\eps\le \frac{1}{c_\kappa}\xi^\alpha+\eps\le \frac{1}{c_\kappa}\Big\{\xi+\eps^{\frac{1}{\alpha}}\Big\}^\alpha\qquad\text{for all }\xi\in(0,\eta)\text{ and }\eps\in(0,1).
\eea
For the lower bound, \eqref{kappa-bed} and \eqref{A1} give $\kappa_\eps(\xi)\ge \max\{\kappa(\xi)-\eps,\ \eps\}\ge \max\{c_\kappa \xi^\alpha-\eps,\ \eps\}$ for all $\xi\in(0,\eta)$ and $\eps\in(0,1)$, whence
\bea{k-b-d-2}
3\kappa_\eps(\xi)\ge c_\kappa \xi^\alpha-\eps +2\eps=c_\kappa\xi^\alpha+\eps\ge \frac{c_\kappa}{2^{\alpha-1}}\Big\{\xi+\eps^\frac{1}{\alpha}\Big\}^\alpha\qquad\text{for all }\xi\in(0,\eta)\text{ and }\eps\in(0,1).
\eea
Together, \eqref{k-b-d-1} and \eqref{k-b-d-2} establish \eqref{kappa-eps-bed} with $C_1$ large enough.\abs For \eqref{kappa-eps-bed2}, only the lower bound is needed, and the same argument as in \eqref{k-b-d-2} applies with $\alpha=1$: \eqref{kappa-bed} and \eqref{A2} give $\kappa_\eps\ge \max\{c_\kappa\xi-\eps,\ \eps\}$ for all $\xi\in(0,\eta)$ and $\eps\in(0,1)$, whence, recalling that $c_\kappa\in(0,1]$ as before,
$$3\kappa_\eps(\xi)\ge c_\kappa\xi+\eps\ge c_\kappa(\xi+\eps)\qquad\text{for all }\xi\in(0,\eta)\text{ and }\eps\in(0,1).$$
\qed
With these in hand, we can now  establish that $|E_\eps(t)|$ remains bounded away from zero independently of $\delta$:  
\begin{lem}\label{lem3.5}
In the situation of Lemma \ref{lem3.4} (with constants $\tau_0,\ E_\eps$ understood to depend on $\delta$), assume that \eqref{A1} holds with some $\alpha\ge 0$ on $(0,\eta)$, for some $\eta>0$. Then there exist $\delta_0\in(0,\eta)$ and $\eps_2\in(0,\eps_1)$ such that for all $\delta\in(0,\delta_0)$, one can find $C>0$, independent of $\delta$ and $\eps$, such that $|E_\eps(t)|\ge C$ for all $t_0\in[0,T]$, $t\in(t_0,t_0+\tau_0)$ and $\eps\in(0,\eps_2)$. 
\end{lem}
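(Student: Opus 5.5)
The argument starts from Lemma \ref{lem3.4}: on $E_\eps(t)$ we have $\uTeps\ge C_2$ and $\int_{E_\eps(t)}K_\eps(\uTeps(\cdot,t))\ge \frac{c_1}{2}$, where $c_1=\frac12\inf_{\eps'}\io K_{\eps'}(\underline{\Theta}_{\eps'}(\cdot,t_0))$. By Lemma \ref{lemThetaBounded}, $\uTeps\le\delta$ everywhere, so
\[
\frac{c_1}{2}\le \int_{E_\eps(t)}K_\eps(\uTeps(\cdot,t))\le |E_\eps(t)|\,K_\eps(\delta),
\qquad\text{hence}\qquad
|E_\eps(t)|\ge \frac{c_1}{2K_\eps(\delta)}.
\]
The question is therefore whether $c_1/K_\eps(\delta)$ stays bounded below as $\delta\searrow0$. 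Both sides scale with $\delta$, so the work lies in showing that they scale the same way, namely like $\delta^{\alpha+1}$, uniformly in $\eps$ and $t_0$.

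\textbf{Upper bound for $K_\eps(\delta)$.} For $\delta<\delta_0<\eta$, \eqref{kappa-eps-bed} gives
\[
K_\eps(\delta)\le C_1\int_0^\delta(\sigma+\eps^{1/\alpha})^\alpha\,d\sigma\le C(\delta+\eps^{1/\alpha})^{\alpha+1}.
\]
For $\alpha=0$, $\kappa_\eps$ is simply bounded near $0$ and $K_\eps(\delta)\le C\delta$.

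\textbf{Lower bound for $c_1$.} Here I use Lemma \ref{W1}: for $\eps<\eps_0$ the set $\{\Theta_\eps(\cdot,t_0)>b\}$ has measure at least $1/C(T)$. If $\delta_0\le\min\{b,e^{-1}\}$, then $\uTeps(\cdot,t_0)=\delta$ on this set, so
\[
\io K_\eps(\uTeps(\cdot,t_0))\ge \frac{K_\eps(\delta)}{C(T)}\ge \frac{1}{C(T)\,C_1}\int_0^\delta\sigma^\alpha\,d\sigma=c\,\delta^{\alpha+1},
\]
with $c$ independent of $\eps$, $\delta$ and $t_0\in[0,T]$ (taking $t_0>0$; the case $t_0=0$ should follow from the $L^1$ convergence of $K(\Theta_{0\eps})$, or one replaces the infimum by a liminf over small $\eps$). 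The infimum over $\eps'\in(0,1)$ in $c_1$ includes $\eps'\ge\eps_0$, which is only finitely many functions by the finiteness assumption. For these I bound $K_{\eps'}(\delta)\ge \eps'\delta$, or better via $\kappa_{\eps'}\ge\frac1{C_1}\sigma^\alpha$, which gives $c\,\delta^{\alpha+1}$ as well. The same applies to positivity on the relevant set for those finitely many $\eps'$; for these one can also use continuity and positivity of $\Theta_{\eps'}$. I expect this uniformity, including the reinterpretation of $c_1$, to be the delicate point: in particular, $C_1$ from Lemma \ref{lem3.4} and $\tau_0$ depend on $\delta$, but only $c_1$ enters here. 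Altogether $c_1\ge c\,\delta^{\alpha+1}$.

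\textbf{Conclusion.} Combining the two bounds,
\[
|E_\eps(t)|\ge \frac{c\,\delta^{\alpha+1}}{C(\delta+\eps^{1/\alpha})^{\alpha+1}}.
\]
To remove the $\eps$-dependence, I choose $\eps_2$ so that $\eps^{1/\alpha}\le\delta$ for $\eps<\eps_2$. This gives $|E_\eps(t)|\ge c/(C2^{\alpha+1})$, independent of $\delta$ and $\eps$. If $\eps_2$ must be independent of $\delta$, I would avoid the $\eps^{1/\alpha}$ term by using the sharper estimate $\kappa_\eps\le\kappa+\eps/(\xi+1)^2$ from \eqref{kappa-bed}, which gives $K_\eps(\delta)\le K(\delta)+\eps\delta$. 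Then I would compare $K_\eps(\delta)$ with $K_\eps(\uTeps(\cdot,t_0))\ge K_\eps(\delta)$ on the set from Lemma \ref{W1} directly, using the same $K_\eps$ on both sides, so that the ratio is at least $1/C(T)$. This requires replacing the infimum over $\eps'$ by the value at the current $\eps$ (up to the factor $\frac12$); I would try this route first, as it sidesteps the $\eps$-$\delta$ coupling entirely.
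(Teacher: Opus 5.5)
Your proposal is correct. Your first route is the paper's proof. The paper also:
\begin{itemize}
\item bounds $\int_{E_\eps(t)}K_\eps(\uTeps(\cdot,t))$ by $|E_\eps(t)|\,K_\eps(\delta)$ via Lemma \ref{lemThetaBounded};
\item gets $\inf_{\eps'}\io K_{\eps'}(\underline{\Theta}_{\eps'}(\cdot,t_0))\ge c_3\delta^{\alpha+1}$ from Lemma \ref{W1} for small $\eps'$, and from positivity of the finitely many remaining $\Theta_{\eps'}$ after shrinking $\delta_0$;
\item bounds $K_\eps(\delta)$ by a multiple of $\delta^{\alpha+1}$ through \eqref{kappa-eps-bed}.
\end{itemize}
The one difference is exactly where the paper is too quick. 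It asserts $K_\eps(\delta)\le\frac{c_1 2^{\alpha+1}}{\alpha+1}\delta^{\alpha+1}$ for all $\eps\in(0,1)$. That needs $\eps^{1/\alpha}\le\delta$, and for $\alpha>0$ it fails as soon as $\eps>\frac{c_1 2^{\alpha+1}}{\alpha+1}\delta^{\alpha}$, because $\kappa_\eps\ge\eps$ forces $K_\eps(\delta)\ge\eps\delta$. Your explicit restriction $\eps_2\le\delta^\alpha$ is precisely the missing condition. The resulting $\delta$-dependence of $\eps_2$ is harmless downstream:
\begin{itemize}
\item $\eps_1$ in Lemma \ref{lem3.4} is already chosen in terms of the $\delta$-dependent mass;
\item Lemma \ref{lem4.6} only uses that the lower bound for $|E_\eps(t)|$ is $\delta$-independent;
\item $\delta$ is frozen in Lemma \ref{lem4.8} before $\eps=\eps_j\searrow0$.
\end{itemize}

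Your preferred second route is genuinely different and better, and it goes through without changing Lemma \ref{lem3.4}. Put $m_\eps:=\io K_\eps(\uTeps(\cdot,t_0))$.
\begin{itemize}
\item The computation in the mass-persistence lemma gives, for each individual $\eps$ and with the paper's $\tau_0$, $\io K_\eps(\uTeps(\cdot,t))\ge m_\eps-\frac12\inf_{\eps'}m_{\eps'}\ge\frac12 m_\eps$.
\item The threshold $c_2$ defining $E_\eps(t)$ satisfies $|\Om|K_\eps(c_2)<\frac14\inf_{\eps'}m_{\eps'}\le\frac14 m_\eps$.
\item Hence $\int_{E_\eps(t)}K_\eps(\uTeps(\cdot,t))\ge\frac14 m_\eps\ge\frac{1}{4C(T)}K_\eps(\delta)$ once $\delta\le\min\{b,e^{-1}\}$ and $\eps<\eps_0$, so $|E_\eps(t)|\ge\frac{1}{4C(T)}$.
\end{itemize}
This uses only that $K_\eps$ is increasing. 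It needs neither \eqref{kappa-eps-bed} nor (B1), gives a constant independent of $\alpha$, and leaves $\eps_2$ unrelated to $\delta$. The paper's route, by contrast, compares the mass bound for the extremal $\eps'$ with a power-law bound for the current $\eps$, and so must pay with the coupling $\eps\le\delta^\alpha$. The estimate $K_\eps(\delta)\le K(\delta)+\eps\delta$ that you mention along the way plays no role here. On its own it would not remove the coupling, since the $\eps\delta$ term is the same obstruction.
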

\proof
By \eqref{kappa-eps-bed}, we fix $c_1>0$ independent of $\delta$ such that 
$$\frac{1}{c_1} \Big\{\xi+\eps^\frac{1}{\alpha}\Big\}^\alpha\leq \kappa_\eps(\xi)\le c_1\Big\{ \xi+\eps^\frac{1}{\alpha}\Big\}^\alpha\qquad\text{for all }\xi\in(0,\eta)\text{ and }\eps\in(0,1).$$ 
Due to Lemma \ref{W1} and \eqref{2.15}, one can find $c_2>0$, $\delta_0\in(0,\eta)$ and $\eps_2\in(0,\eps_1)$ such that $\uTeps(\cdot,t_0)\ge\delta$ on a set $\hat{E}_\eps\subset\Om$ with $|\hat{E}_\eps|\ge c_2>0$ for all $\eps\in(0,\eps_2)$, where $c_2$ is independent of $\eps$ and $\delta$. So
$$\inf_{\eps'\in(0,\eps_2)}\io K_{\eps'}(\underline{\Theta}_{\eps'}(\cdot,t_0))\ge\inf_{\eps'\in(0,\eps_2)}\int_{\hat{E}_{\eps'}}K_{\eps'}(\delta)\ge \frac{c_2}{c_1(\alpha+1)}\delta^{\alpha+1}.$$

For $\eps\in[\eps_2,1)$, by Lemma \ref{lem2.2}, $\Theta_\eps>0$ on $\ov\Om\times[0,\infty)$, and since this range of $\eps$ is finite, we may shrink $\delta_0$ so that $\uTeps(\cdot,t_0)\ge \delta$ in $\Om$ for all $\eps\in[\eps_2,1)$. Thus we can find $c_3>0$ independent of $\delta$ such that $\inf_{\eps'\in(0,1)}\io K_{\eps'}(\underline{\Theta}_{\eps'}(\cdot,t_0)\ge c_3\delta^{\alpha+1}$. From Lemma \ref{lemThetaBounded} we infer $$K_\eps(\uTeps)\le K_\eps(\delta)\le \frac{c_1 2^{\alpha+1}}{\alpha+1}\delta^{\alpha+1}\qquad\text{in }\Om\times(t_0,\infty),$$
for all $\eps\in(0,1)$ and thus with \eqref{p4.1-1}
$$\frac{c_3\delta^{\alpha+1}}{4}\le \int_{E_\eps(t)}K_\eps(\uTeps(\cdot,t))\le \frac{c_12^{\alpha+1}\delta^{\alpha+1}}{\alpha+1}|E_\eps(t)|\qquad\text{for all }t\in(t_0,t_0+\tau_0)\text{ and }\eps\in(0,\eps_2).$$
Since $c_1$ and $c_3$ are independent of $\delta$, we obtain $|E_\eps(t)|\ge C:=\frac{c_3}{2^{\alpha+3}c_1}(\alpha+1)$ for all $t\in(t_0,t_0+\tau_0)$ and $\eps\in(0,\eps_2)$.
\qed
\mysection{Quantitative lower bounds for $\Theta$}\label{sec:4}
The primary goal of this section is to establish pointwise lower bounds for $\uTeps$, in the spirit of \cite[Section 6]{Win-Rough}. Our analysis to this end is initiated by the following observation.
\begin{lem}
For $\eps\in(0,1)$ and $t_0\in[0,T]$, let
$$z_\eps(x,t):=-\ln (\uTeps(x,t)),\qquad x\in\overline{\Om},\ t\ge t_0,$$
then $z_\eps$ lies in $C^{2,1}(\ov\Om\times[t_0,\infty))$ and satisfies
\be{sys5}
	\lball
	z_{\eps t}=\frac{D\Del z_\eps}{\kappa_\eps(\uTeps)}-\frac{D|\na z_\eps|^2}{\kappa_\eps(\uTeps)}+\frac{a\uTeps}{\kappa_\eps(\uTeps)},\qquad& x\in\Om,\ t>t_0,\\[1mm]
	\frac{\partial z_\eps}{\partial\nu}=0,\qquad& x\in\partial\Om,\ t>t_0,\\[1mm]
	z_\eps(x,t_0)=-\ln(\underline{\Theta}_{\eps}(x,t_0)),\qquad& x\in\Om,
	\ear
\ee
as well as
\bea{g5.2}
z_\eps(x,t)\ge 1\qquad\text{in }\Om\times(t_0,\infty).
\eea
\end{lem}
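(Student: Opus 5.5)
This is a direct computation resting on two facts already established for $\uTeps$: its positivity and regularity from the well-posedness lemma for \eqref{sys3}, and its upper bound from Lemma \ref{lemThetaBounded}.

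\textbf{Regularity.} Since $\uTeps\in C^{2,1}(\ov\Om\times[t_0,\infty))$ and $\uTeps>0$ on the compact sets $\ov\Om\times[t_0,T']$, continuity gives $\uTeps\ge c(T')>0$ there. The map $s\mapsto-\ln s$ is smooth on $(0,\infty)$. Hence $z_\eps=-\ln\uTeps$ inherits the regularity $C^{2,1}(\ov\Om\times[t_0,\infty))$.

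\textbf{The equation.} Writing $\uTeps=e^{-z_\eps}$, we compute
\[
\uT_{\eps t}=-e^{-z_\eps}z_{\eps t},\qquad \na\uTeps=-e^{-z_\eps}\na z_\eps,\qquad \Del\uTeps=e^{-z_\eps}\big(|\na z_\eps|^2-\Del z_\eps\big).
\]
Substituting these into the first line of \eqref{sys3} gives
\[
-\kappa_\eps(\uTeps)e^{-z_\eps}z_{\eps t}=De^{-z_\eps}\big(|\na z_\eps|^2-\Del z_\eps\big)-ae^{-2z_\eps}.
\]
We divide by $-\kappa_\eps(\uTeps)e^{-z_\eps}$, which is legitimate since $\kappa_\eps\ge\eps>0$ by \eqref{kappa-bed}. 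Using $e^{-z_\eps}=\uTeps$ in the last term, this yields exactly the first line of \eqref{sys5}.

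\textbf{Boundary and initial conditions.} On $\partial\Om$ we have $\partial_\nu z_\eps=-\partial_\nu\uTeps/\uTeps=0$. The initial condition is immediate from the definition of $z_\eps$.

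\textbf{The lower bound \eqref{g5.2}.} By Lemma \ref{lemThetaBounded}, $\uTeps\le\min\{e^{-1},\delta\}\le e^{-1}$ in $\Om\times(t_0,\infty)$. Since $-\ln$ is decreasing, this gives $z_\eps\ge-\ln e^{-1}=1$.

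No step poses a real obstacle. The only point needing care is that strict positivity of $\uTeps$ holds up to $t=t_0$, where the initial datum is $\min\{\Theta_\eps(\cdot,t_0),e^{-1},\delta\}$. This minimum is positive because $\Theta_\eps>0$ on $\ov\Om\times[0,\infty)$ by Lemma \ref{lem2.2}, and it guarantees that the logarithm is well defined on the closed time interval starting at $t_0$.
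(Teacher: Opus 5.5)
Your proposal is correct and follows the same approach as the paper. The paper obtains \eqref{g5.2} from Lemma \ref{lemThetaBounded} and verifies \eqref{sys5} by substituting $z_{\eps t}=-\uT_{\eps t}/\uTeps$ and $\Del z_\eps=-\Del\uTeps/\uTeps+|\na\uTeps|^2/\uTeps^2$ into \eqref{sys3}; you additionally spell out the regularity, the boundary condition and the positivity at $t=t_0$, which the paper leaves implicit.
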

\proof
The inequality \eqref{g5.2} follows directly from Lemma \ref{lemThetaBounded} and the definition of $z_\eps$. Since $z_{\eps t}=-\frac{\underline{\Theta}_{\eps t}}{\uTeps}$ and $\Del z_\eps=-\frac{\Del \uTeps}{\uTeps}+\frac{|\na\uTeps|^2}{\uTeps^2}$ for all $\eps\in(0,1)$, the remaining terms can be verified by straightforward computation on the basis of \eqref{sys3}.
\qed
\subsection{Auxiliary estimates}
We rely on the following two auxiliary results: a localized Poincar\'e inequality and an elementary ODE comparison argument.
\begin{lem}\label{AE1}
Let $\Om\subset\mathbb{R}^n$ be a bounded domain, and let $\iota>0$. Then there exists $C=C(\Om,\iota)>0$ such that whenever $E\subset\Om$ is measurable with $|E|\ge\iota$,
$$\|\varphi\|_{L^2(\Om)}\le C\|\na\varphi\|_{L^2(\Om)}+C\bigg|\int_E\varphi\bigg|\qquad\text{for all }\varphi\in W^{1,2}(\Om).$$
\end{lem}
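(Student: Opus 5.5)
The plan is a contradiction argument based on compactness of the embedding $W^{1,2}(\Om)\hookrightarrow L^2(\Om)$, which holds because $\Om$ is a bounded smooth domain as throughout the paper. Suppose the claim fails. Then for each $k\in\mathbb{N}$ there are a measurable $E_k\subset\Om$ with $|E_k|\ge\iota$ and a function $\varphi_k\in W^{1,2}(\Om)$ with
$$\|\varphi_k\|_{L^2(\Om)}>k\|\na\varphi_k\|_{L^2(\Om)}+k\bigg|\int_{E_k}\varphi_k\bigg|.$$
In particular $\varphi_k\not\equiv 0$, so by homogeneity we may normalize $\|\varphi_k\|_{L^2(\Om)}=1$. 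This gives $\|\na\varphi_k\|_{L^2(\Om)}<\frac1k$ and $\big|\int_{E_k}\varphi_k\big|<\frac1k$.

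Next we extract limits. The sequence $(\varphi_k)$ is bounded in $W^{1,2}(\Om)$, so by Rellich's theorem a subsequence converges strongly in $L^2(\Om)$ to some $\varphi$, and $\na\varphi_k\to 0$ in $L^2$. The limit then satisfies $\na\varphi=0$ in the distributional sense. Since $\Om$ is connected, $\varphi\equiv c$ is constant, and $\|\varphi\|_{L^2(\Om)}=1$ forces $|c|=|\Om|^{-1/2}\neq 0$.

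For the sets, we use weak-$*$ compactness of the indicators $\chi_{E_k}$ in $L^\infty(\Om)$. Along a further subsequence, $\chi_{E_k}\overset{*}{\rightharpoonup}h$ with $0\le h\le 1$ and $\int_\Om h=\lim|E_k|\ge\iota$. We then split
$$\int_{E_k}\varphi_k=\int_\Om\chi_{E_k}(\varphi_k-\varphi)+\int_\Om\chi_{E_k}\varphi.$$
The first term is bounded by $|\Om|^{1/2}\|\varphi_k-\varphi\|_{L^2}\to0$. The second term converges to $c\int_\Om h$, whose absolute value is at least $|c|\iota>0$. This contradicts $\big|\int_{E_k}\varphi_k\big|<\frac1k\to0$.

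The only delicate point is that the sets $E_k$ vary with $k$, so one cannot simply pass to a limiting set. The weak-$*$ limit of the indicators handles this, and the lower bound $|E_k|\ge\iota$ is exactly what keeps $\int h$ away from zero. The argument also needs connectedness of $\Om$, which is implicit in the word ``domain.'' With $C=C(\Om,\iota)$ the resulting constant, the lemma follows.
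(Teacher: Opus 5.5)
Your proof is correct and follows the paper's own argument: contradiction, normalization $\|\varphi_k\|_{L^2(\Omega)}=1$, Rellich compactness giving a constant limit $c$ with $c^2|\Omega|=1$, and the Cauchy--Schwarz estimate on $\int_{E_k}(\varphi_k-c)$. The weak-$*$ compactness of $\chi_{E_k}$ is valid but not needed. Because the limit is constant, $\int_{E_k}\varphi = c|E_k|$ and $|c|\,|E_k|\ge |c|\,\iota$ directly, which is exactly how the paper concludes, so varying sets never require a limiting object.
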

\proof
Assume for contradiction that the assertion were false. Then one can find measurable sets $(E_k)_{k\in\mathbb{N}}\subset\Om$ with $|E_k|\ge\iota$ for all $k\in\mathbb{N}$, and $(\varphi_k)_{k\in\mathbb{N}}\subset W^{1,2}(\Om)$ satisfying
$$\int_{E_k}\varphi_k\to 0\quad\text{and}\quad \|\na\varphi_k\|_{L^2(\Om)}\to 0\qquad \text{as }k\to\infty.$$
Without loss of generality, we may assume that $\|\varphi_k\|_{L^2(\Om)}=1$ for all $k\in\mathbb{N}$. Since $(\varphi_k)$ is bounded in $W^{1,2}(\Om)$, the compact embedding $W^{1,2}(\Om)\hookrightarrow L^2(\Om)$ yields (after extracting a subsequence) $\varphi_k\to\varphi$ strongly in $L^2(\Om)$. Furthermore, $\na\varphi=0$ in $L^2(\Om)$, so $\varphi\equiv c$ for some constant $c\in\mathbb{R}$ with $c^2|\Om|=1$, in particular $c\neq 0$. By the Cauchy-Schwarz inequality,
$$\bigg|\int_{E_k}(\varphi_k-c)\bigg|\le \sqrt{|E_k|}\cdot\|\varphi_k-c\|_{L^2(\Om)}\to0\qquad\text{as }k\to\infty,$$ so that $$c|E_k|=\int_{E_k}\varphi_k-\int_{E_k}(\varphi_k-c)\to0\qquad\text{as }k\to\infty.$$ Since $|E_k|\ge\iota$ for all $k\in\mathbb{N}$, this forces $c=0$, thereby contradicting $c\neq 0$. 
\qed
\begin{lem}\label{AE2}
Let $p>0$, $q\ge 0$, $t_0\in\mathbb{R}$ and $T>t_0$, and suppose that $y\in C^0([t_0,T))\cap C^1((t_0,T))$ is such that
\bea{nn}
y'(t)+py^2(t)\leq q\qquad\text{for all }t\in(t_0,T).\nn
\eea
Then
\bea{nn}
y(t)\le \frac{1}{p(t-t_0)}+\sqrt{\frac{q}{p}}\qquad\text{for all }t\in(t_0,T).\nn
\eea
\end{lem}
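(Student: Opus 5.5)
We need to prove Lemma \ref{AE2}, the ODE comparison. I will compare $y$ with an explicit supersolution of the Riccati-type equation. Set $s:=\sqrt{q/p}$ and
\[
\bar y(t):=\frac{1}{p(t-t_0)}+s,\qquad t\in(t_0,T).
\]
A direct computation gives
\[
\bar y'+p\bar y^2=-\frac{1}{p(t-t_0)^2}+\frac{1}{p(t-t_0)^2}+\frac{2s}{t-t_0}+ps^2\ \ge\ q .
\]
So $\bar y$ is a supersolution on $(t_0,T)$, and $\bar y(t)\to+\infty$ as $t\searrow t_0$.

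Since $y$ is continuous on $[t_0,T)$, it is bounded near $t_0$. Hence there is $t_1>t_0$ with $y<\bar y$ on $(t_0,t_1]$.

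Suppose the claim fails. Then $t^\star:=\inf\{t\in(t_0,T):y(t)\ge\bar y(t)\}$ satisfies $t^\star\ge t_1>t_0$, $y(t^\star)=\bar y(t^\star)$, and $y<\bar y$ on $(t_0,t^\star)$. Setting $w:=y-\bar y$, we get for $t\in(t_0,T)$
\[
w'\le q-py^2-(q-p\bar y^2)+\Big(q-\bar y'-p\bar y^2\Big)\le -p(y+\bar y)\,w .
\]
This is a linear differential inequality with continuous coefficient on $[t_1,t^\star]$. It implies that $w(t)\exp\big(\int_{t_1}^t p(y+\bar y)\big)$ is nonincreasing there. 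Because $w(t_1)<0$, it follows that $w(t^\star)<0$, which contradicts $w(t^\star)=0$.

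Alternatively, one can argue at the touching point: there $w'(t^\star)\ge0$, while the inequality above gives $w'(t^\star)\le0$. This alone is not strict, so the integrating-factor version is the cleaner route.

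The only subtle point is the behaviour at $t_0$, where $\bar y$ blows up. This is exactly why continuity of $y$ at $t_0$ is assumed: it guarantees the strict initial ordering. The case $q=0$ is included, with $s=0$.
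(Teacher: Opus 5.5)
Your proof is correct and follows the paper's route: the same explicit supersolution $\bar y(t)=\frac{1}{p(t-t_0)}+\sqrt{q/p}$, the same verification that $\bar y'+p\bar y^2\ge q$ with $\bar y\to+\infty$ as $t\searrow t_0$, and the same use of continuity of $y$ at $t_0$ to obtain the initial strict ordering. The only difference is that you spell out, via the integrating factor for $w=y-\bar y$, the comparison step the paper merely invokes. That monotonicity of $w\,e^{\int_{t_1}^t p(y+\bar y)}$ already gives $w<0$ on all of $[t_1,T)$, so the first-touching time $t^\star$ is not actually needed.
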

\proof
Since for $\ov y(t):=\frac{1}{p(t-t_0)}+\sqrt{\frac{q}{p}}$, $t>t_0$, we have $\ov y(t)\to+\infty$ as $t\searrow t_0$ and
\bea{nn}
\ov y'(t)+p\ov y^2(t)-q&=&-\frac{1}{p(t-t_0)^2}+p\cdot\Big\{\frac{1}{p(t-t_0)}+\sqrt{\frac{q}{p}}\Big\}^2-q\nn\\
&\ge&-\frac{1}{p(t-t_0)^2}+p\cdot\Big\{\Big(\frac{1}{p(t-t_0)}\Big)^2+\Big(\sqrt{\frac{q}{p}}\Big)^2\Big\}-q=0\nn
\eea
for all $t>t_0$. Using that continuity implies local boundedness of $y$ in $[t_0,T)$ we may invoke a comparison argument to derive the result.
\qed
\subsection{$L^1$ and $L^p$ bounds for $z_\eps$}\label{sec5.2}
With the auxiliary estimates at hand, we first establish that $z_\eps(\cdot,t)$ is in $L^1(\Om)$ for fixed $t>t_0$.
\begin{lem}
Let $t_0\in[0,T]$ and let $\eta>0$ be such that $\kappa_\eps'(\xi)\ge0$ for all $\xi\in(0,\eta)$ and $\eps\in(0,1)$. If $\delta\in(0,\eta)$ and $p\ge 1$ then
\bea{g5.3}
\frac{1}{p}\frac{d}{dt}\io z_\eps^p+D\io |\na z_\eps|^2 z_\eps^{p-2} \frac{z_\eps+p-1}{\kappa_\eps(\uTeps)}\le a\io \frac{z_\eps^{p-1}\uTeps}{\kappa_\eps(\uTeps)}\qquad\text{for all }t>t_0\text{ and }\eps\in(0,1).
\eea
\end{lem}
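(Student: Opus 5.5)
We test \eqref{sys5} against $z_\eps^{p-1}$ and integrate by parts, using the Neumann condition. The only delicate point is that the diffusion coefficient $\frac{D}{\kappa_\eps(\uTeps)}$ is not constant. Since $z_\eps\ge 1>0$ by \eqref{g5.2}, $z_\eps^{p-1}$ is a legitimate smooth test function for $p\ge1$, and
$$\frac1p\frac{d}{dt}\io z_\eps^p=\io z_\eps^{p-1}z_{\eps t}=D\io \frac{z_\eps^{p-1}\Del z_\eps}{\kappa_\eps(\uTeps)}-D\io\frac{z_\eps^{p-1}|\na z_\eps|^2}{\kappa_\eps(\uTeps)}+a\io\frac{z_\eps^{p-1}\uTeps}{\kappa_\eps(\uTeps)}.$$
The last two terms already match the corresponding contributions in \eqref{g5.3}: the gradient term supplies the part $D\io|\na z_\eps|^2z_\eps^{p-2}\frac{z_\eps}{\kappa_\eps(\uTeps)}$, and the source term is exactly the right-hand side.

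For the first term we integrate by parts; the boundary term vanishes because $\partial_\nu z_\eps=0$. This gives
$$D\io\frac{z_\eps^{p-1}\Del z_\eps}{\kappa_\eps(\uTeps)}=-(p-1)D\io\frac{z_\eps^{p-2}|\na z_\eps|^2}{\kappa_\eps(\uTeps)}-D\io z_\eps^{p-1}\na z_\eps\cdot\na\Big(\frac{1}{\kappa_\eps(\uTeps)}\Big).$$
The first piece is the remaining part $(p-1)$ of the gradient term in \eqref{g5.3}. The second piece we compute via $\uTeps=e^{-z_\eps}$, so that $\na\uTeps=-\uTeps\na z_\eps$. Hence
$$\na\Big(\frac1{\kappa_\eps(\uTeps)}\Big)=-\frac{\kappa_\eps'(\uTeps)}{\kappa_\eps^2(\uTeps)}\na\uTeps=\frac{\kappa_\eps'(\uTeps)\uTeps}{\kappa_\eps^2(\uTeps)}\na z_\eps,$$
and the extra term equals
$$-D\io z_\eps^{p-1}\frac{\kappa_\eps'(\uTeps)\uTeps}{\kappa_\eps^2(\uTeps)}|\na z_\eps|^2\le 0.$$

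The sign of this term is the one step requiring the hypotheses. We have $\uTeps>0$, and Lemma \ref{lemThetaBounded} gives $\uTeps\le\min\{e^{-1},\delta\}\le\delta<\eta$, so $\uTeps$ takes values in $(0,\eta)$, where $\kappa_\eps'\ge0$. Together with $z_\eps\ge1$, the term is therefore nonpositive and can simply be dropped. Collecting the three contributions yields \eqref{g5.3} for all $t>t_0$ and $\eps\in(0,1)$.

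The smoothness needed for these manipulations comes from $z_\eps\in C^{2,1}(\ov\Om\times[t_0,\infty))$, which holds because $\uTeps$ is positive and $C^{2,1}$. This is also what allows the time derivative to be taken inside the integral.
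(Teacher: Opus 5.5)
Your proof is correct and follows the paper's argument. Like the paper, you test \eqref{sys5} with $z_\eps^{p-1}$, integrate by parts using the Neumann condition, and drop the term $-D\io z_\eps^{p-1}\frac{\kappa_\eps'(\uTeps)\uTeps}{\kappa_\eps^2(\uTeps)}|\na z_\eps|^2\le 0$ via $0<\uTeps\le\delta<\eta$. The only difference is that you treat all $p\ge1$ at once, which is legitimate since $z_\eps\ge1$; the paper writes out $p\ge2$ and remarks that $p=1$ works the same way.
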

\proof
Let $p\ge 2$. From \eqref{sys5} we immediately obtain
\bea{g4.4}
\frac{1}{p}\frac{d}{dt}\io z_\eps^p&=&D\io \frac{z_\eps^{p-1}\Del z_\eps}{\kappa_\eps(\uTeps)}-D \io\frac{z_\eps^{p-1}|\na z_\eps|^2}{\kappa_\eps(\uTeps)}+a\io \frac{z_\eps^{p-1}\uTeps}{\kappa_\eps(\uTeps)}\quad\text{for all }t>t_0\text{ and }\eps\in(0,1).\ \ \ \
\eea
Integrating by parts and using that $\na z_\eps=-\frac{\na \uTeps}{\uTeps}$, equivalently $\na \uTeps=-\uTeps\na z_\eps$, and 
$$\na\bigg(\frac{1}{\kappa_\eps(\uTeps)}\bigg)=-\frac{\kappa_\eps'(\uTeps)}{\kappa_\eps(\uTeps)^2}\na \uTeps,$$
valid in $\Om\times(t_0,\infty)$ and for all $\eps\in(0,1)$, we obtain that
\bea{nn}
D\io \frac{z_\eps^{p-1}\Del z_\eps}{\kappa_\eps(\uTeps)}&=&-(p-1)D\io \frac{z_\eps^{p-2}|\na z_\eps|^2}{\kappa_\eps(\uTeps)}-D\io z_\eps^{p-1}\na\Big(\frac{1}{\kappa_\eps(\uTeps)}\Big)\cdot\na z_\eps\nn\\
&=&-(p-1)D\io \frac{z_\eps^{p-2}|\na z_\eps|^2}{\kappa_\eps(\uTeps)}-D\io z_\eps^{p-1}\frac{\kappa_\eps'(\uTeps)\uTeps}{\kappa_\eps(\uTeps)^2}|\na z_\eps|^2\nn
\eea
for all $t>t_0$ and $\eps\in(0,1)$, which together with \eqref{g4.4} yields
\bea{nn}
\frac{1}{p}\frac{d}{dt}\io z_\eps^p=-D\io |\na z_\eps|^2 z_\eps^{p-2}\bigg\{\frac{z_\eps+p-1}{\kappa_\eps(\uTeps)}+\frac{\kappa_\eps'(\uTeps)\uTeps z_\eps}{\kappa_\eps(\uTeps)^2}\bigg\}+a\io\frac{z_\eps^{p-1}\uTeps}{\kappa_\eps(\uTeps)}\nn
\eea
for all $t>t_0$ and $\eps\in(0,1)$. By Lemma \ref{lemThetaBounded}, $\uTeps\leq\delta<\eta$ in $\Om\times(t_0,\infty)$ for all $\eps\in(0,1)$, so that, by hypothesis, $\kappa_\eps'(\uTeps)\ge 0$ in $\Om\times(t_0,\infty)$ for all $\eps\in(0,1)$.  Therefore, $$\io |\na z_\eps|^2z_\eps^{p-1}\frac{\kappa_\eps'(\uTeps)\uTeps}{\kappa_\eps(\uTeps)^2}\ge 0,$$ and \eqref{g5.3} follows. The case $p=1$ can be checked separately by the same argument, which does not require any essential modifications.
\qed
\begin{lem}\label{lem4.6}
Let $\eta>0$ be such that $\kappa_\eps'(\xi)\ge0$ for all $\xi\in(0,\eta)$ and $\eps\in(0,1)$, and assume that \eqref{A1} holds for some $\alpha\ge 0$, or \eqref{A2} holds, both on $(0,\eta)$. Let $\delta_0\in(0,\eta)$ and $\eps_2\in(0,1)$ be as in Lemma \ref{lem3.5}. Then there exists $\delta_1\in(0,\delta_0)$ such that for all $\delta\in(0,\delta_1)$, with $\tau_0>0$ as in  Lemma \ref{lem3.4}  we can find $C>0$ fulfilling
\bea{l4.6}
\io z_\eps(\cdot,t)\le \frac{C}{t-t_0}\qquad\text{for all }t_0\in[0,T],\ t\in(t_0,t_0+\min\{1,\tau_0\})\text{ and }\eps\in(0,\eps_2).
\eea
\end{lem}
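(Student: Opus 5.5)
The plan is to use \eqref{g5.3} with $p=1$ and turn it into a Riccati-type differential inequality for $y(t):=\io z_\eps(\cdot,t)$, to which Lemma \ref{AE2} applies. With $p=1$, \eqref{g5.3} becomes
$$y'(t)+D\io\frac{|\na z_\eps|^2}{z_\eps\,\kappa_\eps(\uTeps)}\le a\io\frac{\uTeps}{\kappa_\eps(\uTeps)}.$$

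The right-hand side needs a uniform bound. Under \eqref{A2}, \eqref{kappa-eps-bed2} gives $\uTeps/\kappa_\eps(\uTeps)\le C_2$. Under \eqref{A1} with $\alpha\le 1$ we are again in case \eqref{A2}, by the Remark after Theorem \ref{theo2}. Under \eqref{A1} with $\alpha>1$ we only get $\uTeps/\kappa_\eps(\uTeps)\le C\uTeps^{1-\alpha}$, which is not bounded. In that case I will instead keep the $p=1$ inequality in the weighted form and exploit that $z_\eps$ is large where $\uTeps$ is small. Concretely, I would write $\uTeps^{1-\alpha}=e^{(\alpha-1)z_\eps}$ and absorb this growth differently, or else use a modified functional $\io \phi(z_\eps)$ adapted to $\kappa$. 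I flag this as a possible gap. The cleanest route is to assume the reduction gives a bounded $q$, so that $y'+\text{(dissipation)}\le q:=aC|\Om|$.

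For the dissipation, note that $\kappa_\eps(\uTeps)\le\kappa_\eps(\delta)\le c$ by monotonicity on $(0,\eta)$ and Lemma \ref{lemThetaBounded}. Hence the dissipation is at least $c'\io|\na z_\eps|^2/z_\eps=4c'\io|\na\sqrt{z_\eps}|^2$. Apply Lemma \ref{AE1} to $\varphi=\sqrt{z_\eps}$ with $E=E_\eps(t)$, whose measure is bounded below independently of $\delta$ by Lemma \ref{lem3.4} or Lemma \ref{lem3.5}. This gives
$$y=\|\sqrt{z_\eps}\|_{L^2}^2\le 2C^2\|\na\sqrt{z_\eps}\|_{L^2}^2+2C^2\Big(\int_{E_\eps}\sqrt{z_\eps}\Big)^2.$$
On $E_\eps(t)$ we have $\uTeps\ge C_2$, so $z_\eps\le-\ln C_2$ there and the last term is a constant $c''$. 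Therefore $\io|\na\sqrt{z_\eps}|^2\ge c_3y-c_4$.

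This alone gives only a linear lower bound, but Lemma \ref{AE2} needs $y^2$. To get it, I use Lemma \ref{AE1} on $\varphi=z_\eps$ instead, combined with H\"older/Cauchy--Schwarz:
$$\|\na z_\eps\|_{L^1}^2\le \io\frac{|\na z_\eps|^2}{z_\eps}\cdot y.$$
An $L^1$ Poincar\'e version of Lemma \ref{AE1}, proved by the same compactness argument in $W^{1,1}$, gives $y\le C\|\na z_\eps\|_{L^1}+C|E|\sup_E z_\eps$. It follows that $\io|\na z_\eps|^2/z_\eps\ge (y-c)_+^2/(C^2y)$. For $y\ge 2c$ this is at least $y/(4C^2)$, which is still linear. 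So I expect the real mechanism to be sharper: use the weight $z_\eps+p-1$ with $p=2$ on $\io z_\eps^2$. Alternatively, and more simply, apply Lemma \ref{AE1} to $\varphi=z_\eps$ with the bound $\io|\na z_\eps|^2/\kappa_\eps$ coming from $p=1$ after dropping $1/z_\eps$. That would give $y^2\le\|z_\eps\|_{L^2}^2\,|\Om|\le C\io|\na z_\eps|^2+c$. I will therefore aim to produce the term $\io|\na z_\eps|^2$ directly from the $p=1$ version of the identity before the weight $z_\eps^{-1}$ appears; the term $-D\io|\na z_\eps|^2/\kappa_\eps$ is present in \eqref{sys5} integrated over $\Om$.

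With that, I obtain $y'+py^2\le q$ with $p,q$ independent of $\eps$, $t_0$ and $\delta\in(0,\delta_1)$. Lemma \ref{AE2} then yields $y\le \frac1{p(t-t_0)}+\sqrt{q/p}\le\frac{C}{t-t_0}$ for $t-t_0<1$. The $\delta_1$ restriction enters when ensuring $\kappa_\eps(\delta)$ is small and the constants in the estimates are uniform in $\delta$. The main obstacle is securing a uniformly bounded right-hand side in the case $\alpha>1$, where $\uTeps/\kappa_\eps(\uTeps)$ is unbounded.
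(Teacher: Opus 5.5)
Your argument for \eqref{A2}, and hence for \eqref{A1} with $\alpha\le 1$, is correct and is the paper's Case 2. You bound $\uTeps/\kappa_\eps(\uTeps)$ via \eqref{kappa-eps-bed2} and bound $\kappa_\eps(\uTeps)$ from above. You then use \eqref{4.8} with $\varphi=z_\eps$ to get $y^2\le|\Om|\,\|z_\eps\|_{L^2(\Om)}^2\le c\io|\na z_\eps|^2+c$, and close with Lemma \ref{AE2}. Your detour through $\io|\na z_\eps|^2/z_\eps$ rests on a misreading. For $p=1$ the weight in \eqref{g5.3} is $z_\eps^{p-2}(z_\eps+p-1)=z_\eps^{-1}\cdot z_\eps=1$, so \eqref{g5.3} directly gives the dissipation $D\io|\na z_\eps|^2/\kappa_\eps(\uTeps)$, and no $L^1$-Poincar\'e variant is needed.

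The genuine gap is the case \eqref{A1} with $\alpha>1$, which you leave open. This is exactly the case for which the threshold $\delta_1$ exists. Its role is not to make $\kappa_\eps(\delta)$ small; in the \eqref{A2} case no smallness of $\delta$ is used at all. The missing idea is to absorb the unbounded source into the dissipation rather than bound it.

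\begin{itemize}
\item Since $\uTeps\le\delta$, \eqref{kappa-eps-bed} gives $a\io\uTeps/\kappa_\eps(\uTeps)\le a\delta c\io\{\uTeps+\eps^{1/\alpha}\}^{-\alpha}$, which carries a small prefactor $\delta$.
\item Apply \eqref{4.8} to $\varphi:=\{e^{-z_\eps}+\eps^{1/\alpha}\}^{-\alpha/2}$. Since $|\na\varphi|\le\frac{\alpha}{2}\varphi|\na z_\eps|$, you get $\|\na\varphi\|_{L^2(\Om)}^2\le\frac{\alpha^2}{4}\io|\na z_\eps|^2\{\uTeps+\eps^{1/\alpha}\}^{-\alpha}$. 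By the upper bound in \eqref{kappa-eps-bed}, this is controlled by the dissipation $D\io|\na z_\eps|^2/\kappa_\eps(\uTeps)$.
\item The boundary term is harmless: $\int_{E_\eps(t)}\varphi\le|\Om|c_2^{-\alpha/2}$ because $\uTeps\ge c_2$ on $E_\eps(t)$.
\item Lemma \ref{lem3.5} makes the lower bound for $|E_\eps(t)|$, and hence the constant in \eqref{4.8}, independent of $\delta$. So the gradient term carries the coefficient $\delta$ times a $\delta$-independent constant, and choosing $\delta<\delta_1$ absorbs it. The $\delta$-dependence of $c_2$ only affects the additive constant.
\item For the quadratic term, $\uTeps+\eps^{1/\alpha}\le 2$ gives $\io|\na z_\eps|^2\le 2^\alpha\io|\na z_\eps|^2\{\uTeps+\eps^{1/\alpha}\}^{-\alpha}$. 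Then \eqref{4.8} with $\varphi=z_\eps$ again yields a Riccati inequality, $y'+\delta y^2\le c(\delta)$ in the paper, and Lemma \ref{AE2} concludes.
\end{itemize}
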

\proof
Due to Lemma \ref{lem3.4} there exists $c_1>0$ and $c_2>0$ such that for all $t\in(t_0,t_0+\tau_0)$ and $\eps\in(0,\eps_2)$ we can find $E_\eps(t)\subset\Om$ with $|E_\eps(t)|\ge c_1$, such that
\bea{4.7}
\uTeps(x,t)\ge c_2\qquad\text{for all }x\in E_\eps(t).
\eea
Since $|E_\eps(t)|\ge c_1$ for all $t\in(t_0,t_0+\tau_0)$ and $\eps\in(0,\eps_2)$, Lemma \ref{AE1} (applied with $\iota:=c_1$) provides a single constant $c_3>0$, valid uniformly over all these sets $E_\eps(t)$, such that
\bea{4.8}
\|\varphi\|_{L^2(\Om)}^2\le c_3\|\na\varphi\|_{L^2(\Om)}^2+c_3\bigg|\int_{E_\eps(t)}\varphi\bigg|^2\quad\text{for all }\varphi\in W^{1,2}(\Om),\ t\in(t_0,t_0+\tau_0)\text{ and }\eps\in(0,\eps_2).
\eea
Case 1: Assume that \eqref{A1} holds for some $\alpha>0$. By Lemma \ref{lem3.5}, the constant $c_1$ may be chosen independently of $\delta$, and consequently so may $c_3$. According to \eqref{kappa-eps-bed} we can fix $c_4>0$ independent of $\delta$, such that 
$$\frac{1}{c_4} \big\{\xi+\eps^\frac{1}{\alpha}\big\}^\alpha\leq \kappa_\eps(\xi)\le c_4\big\{ \xi+\eps^\frac{1}{\alpha}\big\}^\alpha\qquad\text{for all }\xi\in(0,\eta)\text{ and }\eps\in(0,1).$$ 
Now \eqref{g5.3} and Lemma \ref{lemThetaBounded} directly yield
\bea{4.9}
\frac{d}{dt}\io z_\eps&\le&-D\io \frac{|\na z_\eps|^2}{\kappa_\eps(\uTeps)}+a\io \frac{\uTeps}{\kappa_\eps(\uTeps)} \nn\\
&\le& -\frac{D}{c_4}\io |\na z_\eps|^2\big\{\uTeps+\eps^\frac{1}{\alpha}\big\}^{-\alpha}+ac_4\io \uTeps\big\{\uTeps+\eps^\frac{1}{\alpha}\big\}^{-\alpha}\nn\\
&\le& -\frac{D}{c_4}\io |\na z_\eps|^2\big\{\uTeps+\eps^\frac{1}{\alpha}\big\}^{-\alpha}+a\delta c_4\io \big\{\uTeps+\eps^\frac{1}{\alpha}\big\}^{-\alpha}\quad\text{for all }t>t_0\text{ and }\eps\in(0,1). \hspace{10mm}
\eea
Since
\bea{nn}
\Big|\na\big\{e^{-z_\eps}+\eps^\frac{1}{\alpha}\big\}^{-\frac{\alpha}{2}}\Big|=\frac{\alpha}{2}e^{-z_\eps}\big\{e^{-z_\eps}+\eps^\frac{1}{\alpha}\big\}^{-\frac{\alpha}{2}-1}|\na z_\eps|\le \frac{\alpha}{2}\big\{e^{-z_\eps}+\eps^\frac{1}{\alpha}\big\}^{-\frac{\alpha}{2}}|\na z_\eps|\nn
\eea
as well as $2\ge \uTeps+\eps^\frac{1}{\alpha}$ and thus $1\le 2^\alpha\big\{\uTeps+\eps^{\frac{1}{\alpha}}\big\}^{-\alpha}$ for all $t>t_0$ and $\eps\in(0,1)$, by \eqref{4.7}, \eqref{4.8} and Hölder's inequality, we obtain
\bea{4.10}
a\delta c_4\io&&\hspace{-9mm}\big\{\uTeps+\eps^\frac{1}{\alpha}\big\}^{-\alpha}+\delta\bigg\{\io z_\eps\bigg\}^2\nn\\
&\le&a\delta c_4\Big\| \big\{e^{- z_\eps}+\eps^\frac{1}{\alpha}\big\}^{-\frac{\alpha}{2}}\Big\|_{L^2(\Om)}^2+\delta|\Om|\|z_\eps\|_{L^2(\Om)}^2\nn\\
&\le& a\delta c_3c_4\Big\| \na \big\{e^{- z_\eps}+\eps^\frac{1}{\alpha}\big\}^{-\frac{\alpha}{2}}\Big\|_{L^2(\Om)}^2 +a\delta c_3c_4\bigg|\int_{E_\eps(t)}\big\{e^{- z_\eps}+\eps^\frac{1}{\alpha}\big\}^{-\frac{\alpha}{2}}\bigg|^2\nn\\
&&+\delta |\Om|c_3\|\na z_\eps\|_{L^2(\Om)}^2+\delta |\Om|c_3\bigg|\int_{E_\eps(t)}z_\eps\bigg|^2\nn\\
&\le& \frac{a\delta\alpha^2 c_3c_4}{4}\io \big\{e^{-z_\eps}+\eps^\frac{1}{\alpha}\big\}^{-\alpha}|\na z_\eps|^2+a\delta c_3 c_4|E_{\eps}(t)|^2\big\{c_2+\eps^\frac{1}{\alpha}\big\}^{-\alpha}\nn\\
&&+ 2^\alpha\delta|\Om|c_3\io|\na z_\eps|^2\big\{\uTeps+\eps^\frac{1}{\alpha}\big\}^{-\alpha}+\delta|\Om|c_3|E_\eps(t)|^2|\ln(c_2)|^2 \nn\\
&\le&\delta c_5\io |\na z_\eps|^2\big\{\uTeps+\eps^\frac{1}{\alpha}\big\}^{-\alpha}+ c_6\qquad\text{for all }t\in(t_0,t_0+\tau_0)\text{ and }\eps\in(0,\eps_2),
\eea
with $c_5:=\frac{a \alpha^2 c_3c_4}{4}+2^{\alpha}|\Om|c_3$ and $c_6:=a\delta c_2^{-\alpha}c_3c_4|\Om|^2+\delta\ln(c_2)^2c_3|\Om|^3$, since $z_\eps(x,t):=-\ln(\uTeps(x,t))\ge 1$ in $\Om\times(t_0,\infty)$. Since $c_4$ and $c_5$ are independent of $\delta$, we choose $\delta_1:=\min\{\delta_0,\frac{D}{c_4c_5}\}$ to ensure that $\delta c_5\le\frac{D}{c_4}$, so that by \eqref{4.9} and \eqref{4.10}, we infer that $y_\eps(t):=\io z_\eps(\cdot,t)$, $t\in(t_0,t_0+\tau_0)$, $\eps\in(0,\eps_2)$, satisfies
\bea{ret}
y_\eps'(t)+\delta y_\eps^2(t)\le c_6\qquad\text{for all }t\in(t_0,t_0+\tau_0)\text{ and }\eps\in(0,\eps_2).
\eea
An application of Lemma \ref{AE2} shows that, in particular,
$$y_\eps(t)\le \frac{1}{\delta (t-t_0)}+\sqrt{\frac{c_6}{\delta}}\qquad\text{for all }t_0\in(t_0,t_0+\tau_0)\text{ and }\eps\in(0,\eps_2),$$
and thus \eqref{l4.6}.\abs
Case 2: Assume that \eqref{A2} holds. Then by \eqref{kappa-eps-bed} there exists $c_7>0$ such that $\kappa_\eps(\xi)\ge c_7(\xi+\eps)$ for all $\xi\in(0,\eta)$ and $\eps\in(0,1)$. Due to Lemma \ref{lemThetaBounded} and \eqref{kappa-bed}, we can find $c_8>0$ fulfilling $\kappa_\eps(\uTeps)\le c_8$ in $\Om\times(t_0,\infty)$ for all $\eps\in(0,1)$. Now \eqref{g5.3} directly yields
\bea{nn}
\frac{d}{dt}\io z_\eps\le -\frac{D}{c_8}\io |\na z_\eps|^2+\frac{a}{c_7}|\Om| \qquad\text{for all }t>t_0\text{ and }\eps\in(0,1).\nn
\eea
Using \eqref{4.8} and defining $\hat{y}_\eps(t):=\io z_\eps(\cdot,t)$ for $t\in(t_0,t_0+\tau_0)$, $\eps\in(0,\eps_2)$, the same procedure as above (starting with \eqref{ret}) concludes the proof.
\qed
To obtain uniform-in-time estimates for $z_\eps$, we establish appropriate bounds for higher $L^p-$norms. In particular, we derive a differential inequality for $\io z_\eps^p$, which controls the evolution through a dissipative term and a gain term.
\begin{lem}\label{Lem5.3}
Let $\eta>0$ be such that $\kappa_\eps'(\xi)\ge0$ for all $\xi\in(0,\eta)$ and $\eps\in(0,1)$ and fix $\delta\in(0,\eta)$. Assume that either \eqref{A1} for some $\alpha\ge 0$, or \eqref{A2} holds, both on $(0,\eta)$. Let $\tau_0>0$ be as in Lemma \ref{lem3.4}.  Then there exist $\eps_3\in(0,1)$, $C_1>0$, $C_2>0$ and $C_3>0$, such that for all $p\ge 2$, $t_0\in[0,T]$, $t\in(t_0,t_0+\tau_0)$ and $\eps\in(0,\eps_3)$ we have 
\bea{4.14}
\frac{d}{dt}\io z_\eps^p\le -C_1\bigg\{\io z_\eps^p\bigg\}^\frac{n+2}{n}\cdot\bigg\{\io z_\eps^\frac{p}{2}\bigg\}^{-\frac{4}{n}}+C_2p^{C_3}\bigg\{\io z_\eps^\frac{p}{2}\bigg\}^2.
\eea
\end{lem}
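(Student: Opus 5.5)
We start from \eqref{g5.3} with $p\ge 2$. We bound the dissipative term from below and the source term from above, and then combine the Gagliardo--Nirenberg inequality with the localized Poincaré inequality of Lemma \ref{AE1} to obtain \eqref{4.14}.

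\textbf{Step 1: bounds on the two terms of \eqref{g5.3}.} By Lemma \ref{lemThetaBounded} and \eqref{kappa-bed}, $\kappa_\eps(\uTeps)\le c_1$ uniformly in $\eps$. Since $z_\eps\ge 1$ by \eqref{g5.2}, the dissipative term dominates
$$\frac{D}{c_1}\io z_\eps^{p-1}|\na z_\eps|^2=\frac{4D}{c_1(p+1)^2}\io\big|\na z_\eps^{\frac{p+1}{2}}\big|^2 .$$
For the source term, $\frac{\uTeps}{\kappa_\eps(\uTeps)}$ is bounded only when $\alpha\le1$, i.e.\ under \eqref{A2}, via \eqref{kappa-eps-bed2}; this is the reduction noted in the Remark. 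Under \eqref{A1} with $\alpha>1$ we have instead $\frac{\uTeps}{\kappa_\eps(\uTeps)}\le c\,\uTeps^{1-\alpha}=c\,e^{(\alpha-1)z_\eps}$, which is exponential in $z_\eps$ and cannot be absorbed by powers. In that case I would not drop the $\kappa_\eps$ weight. Instead I would keep the full dissipation $D\io z_\eps^{p-1}|\na z_\eps|^2/\kappa_\eps$ and compare it with $\io z_\eps^{p-1}/\kappa_\eps$ through the weighted function $w=\{e^{-z_\eps}+\eps^{1/\alpha}\}^{-\alpha/2}z_\eps^{(p-1)/2}$, in the spirit of \eqref{4.10}. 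The goal is to show that, thanks to the factor $\delta$ and smallness arranged via Lemma \ref{lem3.5}, the weighted source is bounded by half the weighted dissipation plus a term controlled by $\io z_\eps^{p}$-type quantities on $E_\eps(t)$, where $\uTeps\ge C_2$ makes the weight harmless. Once this is done, both cases reduce to
$$\frac{d}{dt}\io z_\eps^p+\frac{c}{p}\io\big|\na z_\eps^{\frac{p+1}{2}}\big|^2\le c\,p\io z_\eps^{p-1}.$$
Since $z_\eps^{p}\le z_\eps^{p+1}$, it is then convenient to work with $\psi:=z_\eps^{p/2}$, using $|\na\psi|^2\le c\,p^2\, z_\eps^{p-1}|\na z_\eps|^2$.

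\textbf{Step 2: interpolation.} By the Gagliardo--Nirenberg inequality,
$$\|\psi\|_{L^2}^2\le C\|\na\psi\|_{L^2}^{\frac{2n}{n+2}}\|\psi\|_{L^1}^{\frac{4}{n+2}}+C\|\psi\|_{L^1}^2,$$
and hence
$$\|\na\psi\|_{L^2}^2\ge c\,\|\psi\|_{L^2}^{\frac{2(n+2)}{n}}\|\psi\|_{L^1}^{-\frac4n}-C\|\psi\|_{L^1}^2 .$$
Here $\|\psi\|_{L^2}^2=\io z_\eps^p$ and $\|\psi\|_{L^1}=\io z_\eps^{p/2}$, which produces exactly the two quantities appearing in \eqref{4.14}. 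The gain term $c\,p\io z_\eps^{p-1}\le c\,p\io z_\eps^p$ is handled by Young's inequality against the superlinear dissipation:
$$p\,X\le \tfrac{C_1}{2}X^{\frac{n+2}{n}}Y^{-\frac4n}+C\,p^{\frac{n+2}{2}}Y^2,\qquad X=\io z_\eps^p,\ Y=\io z_\eps^{p/2}.$$
The $p$-powers coming from the $p^{-2}$ in the dissipation and from the Young constants are collected into $C_2p^{C_3}$.

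\textbf{Main obstacle.} The main difficulty is Step 1 in case \eqref{A1} with $\alpha>1$, where the source term $\frac{\uTeps}{\kappa_\eps(\uTeps)}$ blows up like $\uTeps^{1-\alpha}$ as $\uTeps\to0$, uniformly only up to $\eps^{(1-\alpha)/\alpha}$. My first attempt would be to absorb this term into the weighted dissipation, as in \eqref{4.10}. The absorption relies on the localized Poincaré inequality on $E_\eps(t)$ together with the smallness of $\delta$. Tracking the $p$-dependence through this absorption is what determines $C_3$.
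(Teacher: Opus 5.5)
\textbf{Gap in the case \eqref{A1} with $\alpha>1$.} This is the case you flag as the main obstacle, and the absorption you propose cannot be closed uniformly in $p$. Bound the source by $ac\,\delta\int_\Omega w^2$, with $w=z_\varepsilon^{(p-1)/2}\{e^{-z_\varepsilon}+\varepsilon^{1/\alpha}\}^{-\alpha/2}$, and apply Lemma \ref{AE1}. This costs the full chain-rule factor, $|\nabla w|\le\frac{p(1+\alpha)}{2}z_\varepsilon^{(p-1)/2}\{e^{-z_\varepsilon}+\varepsilon^{1/\alpha}\}^{-\alpha/2}|\nabla z_\varepsilon|$. To absorb the result into the weighted dissipation $\frac{D}{c}\int_\Omega z_\varepsilon^{p-1}\{e^{-z_\varepsilon}+\varepsilon^{1/\alpha}\}^{-\alpha}|\nabla z_\varepsilon|^2$ you would therefore need $\delta\lesssim p^{-2}$. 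Using also the $(p-1)z_\varepsilon^{p-2}$ part of the dissipation does not remove this growth in $p$.

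But $\delta$ is fixed in the lemma, and \eqref{4.14} must hold for all $p\ge 2$; Lemma \ref{lem4.8} uses it with $p_k=2^k\to\infty$. So this is not a matter of tracking $p$ into $C_3$: the absorption simply breaks down for large $p$. Lemma \ref{lem3.5} supplies no smallness either; it only makes $|E_\varepsilon(t)|$ independent of $\delta$. The $\delta$-trick of \eqref{4.10} works in Lemma \ref{lem4.6} only because there $p=1$, and the gradient of $\{e^{-z_\varepsilon}+\varepsilon^{1/\alpha}\}^{-\alpha/2}$ carries no factor $p$.

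\textbf{The missing idea.} What is needed is a $p$-dependent pointwise Young splitting, which makes $\delta$ irrelevant. For small $\varepsilon$ one has $\underline{\Theta}_\varepsilon\le\underline{\Theta}_\varepsilon+\varepsilon^{1/\alpha}\le 1$, so the source is at most $ac\int_\Omega z_\varepsilon^{p-1}\{\underline{\Theta}_\varepsilon+\varepsilon^{1/\alpha}\}^{1-\alpha}$. Write $\{\cdot\}^{1-\alpha}=\big(p^{-2}\{\cdot\}^{-\alpha}\big)^{1/q}\big(p^{2\alpha-2}\big)^{1/\alpha}$ with $q=\frac{\alpha}{\alpha-1}$, and apply Young with exponents $q$ and $\alpha$. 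This gives
$\frac{c_6}{p^2}\int_\Omega z_\varepsilon^{p-1}\{\cdot\}^{-\alpha}+c_7p^{2\alpha-2}\int_\Omega z_\varepsilon^{p-1}$.
\begin{itemize}
\item The prefactor $p^{-2}$ exactly compensates the $p^2$ in $|\nabla w|^2$. Hence the localized Poincar\'e absorption works with a $p$-independent choice of $c_6$.
\item The $E_\varepsilon(t)$-term is at most $c\,\{\int_\Omega z_\varepsilon^{p/2}\}^2$, since $z_\varepsilon\ge 1$ and the weight is bounded on $E_\varepsilon(t)$ by \eqref{p4.1}.
\item The unweighted remainder $p^{2\alpha-2}\int_\Omega z_\varepsilon^{p-1}$ is handled by Gagliardo--Nirenberg and Young exactly as in your Step 2. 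This is where the polynomial factor $p^{C_3}$ actually comes from.
\end{itemize}

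\textbf{A smaller error in Steps 1--2.} From $z_\varepsilon^{p-2}(z_\varepsilon+p-1)$ you keep only $z_\varepsilon^{p-1}|\nabla z_\varepsilon|^2$. After multiplying \eqref{g5.3} by $p$ and passing to $\psi=z_\varepsilon^{p/2}$, this leaves only $\frac{c}{p}\|\nabla\psi\|_{L^2(\Omega)}^2$. You therefore get a coefficient of order $\frac1p$ in front of $\{\int_\Omega z_\varepsilon^p\}^{\frac{n+2}{n}}\{\int_\Omega z_\varepsilon^{p/2}\}^{-\frac4n}$. A $p$-dependent factor on the negative term cannot be ``collected into $C_2p^{C_3}$''; \eqref{4.14} requires $C_1$ independent of $p$.

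The fix is to use $(p-1)z_\varepsilon^{p-2}\ge\frac p2 z_\varepsilon^{p-2}$. This gives $\frac{Dp}{2c}\int_\Omega z_\varepsilon^{p-2}|\nabla z_\varepsilon|^2=\frac{2D}{cp}\|\nabla z_\varepsilon^{p/2}\|_{L^2(\Omega)}^2$, whose coefficient becomes $p$-independent after multiplication by $p$. The weaker version would still drive the Moser iteration of Lemma \ref{lem4.8}, but it is not the stated inequality.
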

\proof
By Lemma \ref{lem3.4}, we can find $\eps_\star\in(0,1)$, $c_1>0$ and $c_2>0$ such that for all $t\in(t_0,t_0+\tau_0)$ and $\eps\in(0,\eps_\star)$ there exists $E_\eps(t)\subset\Om$ with $|E_\eps(t)|\ge c_1$ such that
\bea{4.15}
\uTeps(x,t)\ge c_2\qquad\text{for all }x\in E_\eps(t).
\eea
By Lemma \ref{AE1}, applied with $\iota:=c_1$, we further obtain $c_3>0$, uniform in $t$ and $\eps$, such that
\bea{4.16}
\|\varphi\|_{L^2(\Om)}^2\le c_3\|\na\varphi\|_{L^2(\Om)}^2+c_3\bigg|\int_{E_\eps(t)}\varphi\bigg|^2\qquad\text{for all }\varphi\in W^{1,2}(\Om)\text{ and }\eps\in(0,\eps_\star).
\eea
By the Gagliardo-Nirenberg inequality, there exists $c_4>0$ such that
\bea{GN-1}
\|\varphi\|_{L^2(\Om)}^2\le c_4\|\na\varphi\|_{L^2(\Om)}^\frac{2n}{n+2}\|\varphi\|_{L^1(\Om)}^\frac{4}{n+2}+c_4\|\varphi\|_{L^1(\Om)}^2\qquad\text{for all }\varphi\in W^{1,2}(\Om).
\eea
Case 1: Assume that \eqref{A1} holds for some $\alpha>1$. For $\alpha\le 1$, the argument in Case 2 below applies directly. According to \eqref{kappa-eps-bed} we fix $c_5>0$ such that
\bea{g5.5}
\frac{1}{c_5}\big\{\xi+\eps^\frac{1}{\alpha}\big\}^\alpha\le\kappa_\eps(\xi)\le c_5\big\{\xi+\eps^\frac{1}{\alpha}\big\}^\alpha\qquad\text{for all }\xi\in(0,\eta)\text{ and }\eps\in(0,1).
\eea
Let $q:=\frac{\alpha}{\alpha-1}>1$. Then there exists $\eps_{\star\star}\in(0,\eps_\star)$ such that for all $\eps\in(0,\eps_{\star\star})$, $\uTeps+\eps^\frac{1}{\alpha}\le 1$. Let $c_6:=\frac{4D}{ac_3c_5^2(1+\alpha)^2}$ and $c_7:=\frac{1}{\alpha}q^{1-\alpha}c_6^{1-\alpha}$. Recalling \eqref{g5.3} and applying \eqref{g5.5} and Young's inequality,
\bea{g5.6}
&&\hspace{-20mm}\frac{1}{p}\frac{d}{dt}\io z_\eps^p+D\io|\na z_\eps|^2z_\eps^{p-2}\frac{z_\eps+p-1}{\kappa_\eps(\uTeps)}\nn\\
&\le& a\io \frac{z_\eps^{p-1}\uTeps}{\kappa_\eps(\uTeps)}\nn\\
&\le& a c_5\io z_\eps^{p-1}\cdot\big\{\uTeps+\eps^\frac{1}{\alpha}\big\}^{-\alpha+1}\nn\\
&\le& a c_5\io z_\eps^{p-1}\cdot\big\{\uTeps+\eps^\frac{1}{\alpha}\big\}^{-\frac{\alpha}{q}}\nn\\
&=&ac_5\io z_\eps^{p-1}\cdot\Big\{p^{-2}\big\{\uTeps+\eps^\frac{1}{\alpha}\big\}^{-\alpha}\Big\}^{\frac{1}{q}}\cdot\Big\{p^{2\alpha-2}\Big\}^{\frac{1}{\alpha}}\nn\\
&\le&\frac{ac_5c_6}{p^2}\io z_\eps^{p-1}\cdot\big\{\uTeps+\eps^\frac{1}{\alpha}\big\}^{-\alpha}+ac_5 c_7 p^{2\alpha-2}\io z_\eps^{p-1}
\eea
for all $t>t_0$ and $\eps\in(0,\eps_{\star\star})$. Direct computation shows
\bea{g5.7}
\Big|\na\Big\{z_\eps^\frac{p-1}{2}\cdot\big\{e^{-z_\eps}+\eps^\frac{1}{\alpha}\big\}^{-\frac{\alpha}{2}}\Big\}\Big|&=&\Big\{\frac{p-1}{2}z_\eps^\frac{p-3}{2} \big\{e^{-z_\eps}+\eps^\frac{1}{\alpha}\big\}^{-\frac{\alpha}{2}}+\frac{\alpha}{2}e^{-z_\eps}z_\eps^\frac{p-1}{2}\big\{e^{-z_\eps}+\eps^\frac{1}{\alpha}\big\}^{-\frac{\alpha}{2}-1}\Big\}\cdot |\na z_\eps|\nn\\
&\le& \frac{p(1+\alpha)}{2}z_\eps^\frac{p-1}{2}\big\{e^{-z_\eps}+\eps^\frac{1}{\alpha}\big\}^{-\frac{\alpha}{2}}\cdot|\na z_\eps|
\eea
for all $t>t_0$ and $\eps\in(0,\eps_{\star\star})$. Again, since $\uTeps+\eps^\frac{1}{\alpha}\le 1$, we have $1\le \big\{\uTeps+\eps^{\frac{1}{\alpha}}\big\}^{-\alpha}$ for all $t>t_0$ and $\eps\in(0,\eps_{\star\star})$. Thus \eqref{g5.5} yields with $p\ge 2$
\bea{g5.8}
D\io|\na z_\eps|^2z_\eps^{p-2}\frac{z_\eps+p-1}{\kappa_\eps(\uTeps)}\ge\frac{D}{c_5}\io z_\eps^{p-1}\big\{e^{-z_\eps}+\eps^\frac{1}{\alpha}\big\}^{-\alpha}\cdot|\na z_\eps|^2+\frac{Dp}{2c_5}\io z_\eps^{p-2}|\na z_\eps|^2 
\eea
for all $t>t_0$ and $\eps\in(0,\eps_{\star\star})$. From \eqref{4.16}, combined with \eqref{g5.7}, the definition of $z_\eps$ and our parameter $c_6$, we obtain
\bea{g5.9}
\frac{ac_5c_6}{p^2}&&\hspace{-10mm}\io z_\eps^{p-1}\cdot\big\{\uTeps+\eps^\frac{1}{\alpha}\big\}^{-\alpha}\nn\\
&=& \frac{a c_5c_6}{p^2}\Big\|z_\eps^\frac{p-1}{2}\cdot\big\{e^{-z_\eps}+\eps^\frac{1}{\alpha}\big\}^{-\frac{\alpha}{2}}\Big\|_{L^2(\Om)}^2\nn\\
&\le&\frac{ac_3c_5c_6}{p^2}\Big\|\na\Big\{z_\eps^\frac{p-1}{2}\cdot\big\{e^{-z_\eps}+\eps^\frac{1}{\alpha}\big\}^{-\frac{\alpha}{2}}\Big\}\Big\|_{L^2(\Om)}^2+\frac{a c_3c_5c_6}{p^2}\bigg|\int_{E_\eps(t)} z_\eps^\frac{p-1}{2}\cdot\big\{e^{-z_\eps}+\eps^\frac{1}{\alpha}\big\}^{-\frac{\alpha}{2}}\bigg|^2\nn\\
&\le& \frac{D}{c_5}\io z_\eps^{p-1}\big\{e^{-z_\eps}+\eps^\frac{1}{\alpha}\big\}^{-\alpha}\cdot|\na z_\eps|^2+\frac{ac_3c_5c_6}{p^2}\bigg|\int_{E_\eps(t)} z_\eps^\frac{p-1}{2}\cdot\big\{e^{-z_\eps}+\eps^\frac{1}{\alpha}\big\}^{-\frac{\alpha}{2}}\bigg|^2
\eea
for all $t>t_0$ and $\eps\in(0,\eps_{\star\star})$. Now set $r:=\frac{n+2}{n}>1$ and define $c_8:=\frac{D}{ac_4c_5^2c_7}$ and $c_9:=\frac{2}{n+2}r^{-\frac{n}{2}}c_8^{-\frac{n}{2}}$. Then the Gagliardo–Nirenberg inequality \eqref{GN-1} combined with Young's inequality yields
\bea{g5.10}
a c_5c_7 p^{2\alpha-2}\io z_\eps^{p-1}&\le& ac_5c_7 p^{2\alpha-2}\big\|z_\eps^\frac{p}{2}\big\|_{L^2(\Om)}^2\nn\\
&\le& ac_4c_5c_7p^{2\alpha-2}\big\|\na z_\eps^\frac{p}{2}\big\|_{L^2(\Om)}^\frac{2n}{n+2}\big\|z_\eps^\frac{p}{2}\big\|_{L^1(\Om)}^\frac{4}{n+2}+ac_4c_5c_7p^{2\alpha-2}\big\|z_\eps^\frac{p}{2}\big\|_{L^1(\Om)}^2\nn\\
&=&ac_4c_5c_7\cdot\Big\{\frac{1}{p}\big\|\na z_\eps^\frac{p}{2}\big\|_{L^2(\Om)}^2\Big\}^\frac{n}{n+2}\cdot\Big\{p^{n\alpha+2\alpha-\frac{n}{2}-2}\big\|z_\eps^\frac{p}{2}\big\|_{L^1(\Om)}^2\Big\}^\frac{2}{n+2}\nn\\
&&+ac_4c_5c_7p^{2\alpha-2}\big\|z_\eps^\frac{p}{2}\big\|_{L^1(\Om)}^2\nn\\
&\le&\frac{ac_4c_5c_7c_8}{p}\big\|\na z_\eps^\frac{p}{2}\big\|_{L^2(\Om)}^2+ac_4c_5c_7(c_9+1)p^{n\alpha+2\alpha-\frac{n}{2}-2}\big\|z_\eps^\frac{p}{2}\big\|_{L^1(\Om)}^2\nn\\
&=&\frac{Dp}{4c_5}\io z_\eps^{p-2}|\na z_\eps|^2+c_{10}p^{n\alpha+2\alpha-\frac{n}{2}-2}\big\|z_\eps^\frac{p}{2}\big\|_{L^1(\Om)}^2
\eea
for all $t>t_0$ and $\eps\in(0,\eps_{\star\star})$, with $c_{10}:=ac_4c_5c_7(c_9+1)$. Again the Gagliardo-Nirenberg inequality yields
\bea{nn}
\bigg\{\io z_\eps^p\bigg\}^\frac{n+2}{n}\le(2c_4)^\frac{n+2}{n}\big\|\na z_\eps^\frac{p}{2}\big\|_{L^2(\Om)}^2\cdot\bigg\{\io z_\eps^\frac{p}{2}\bigg\}^\frac{4}{n}+(2c_4)^\frac{n+2}{n}\bigg\{\io z_\eps^\frac{p}{2}\bigg\}^\frac{2n+4}{n}\ \text{for all }t>t_0\text{ and }\eps\in(0,\eps_{\star\star}),\ \ \ \nn
\eea
which immediately yields
\bea{g5.11}
-\frac{Dp}{4c_5}\io z_\eps^{p-2}|\na z_\eps|^2&=&-\frac{D}{pc_5}\big\|\na z_\eps^\frac{p}{2}\big\|_{L^2(\Om)}^2\nn\\
&\le&-\frac{D}{p(2c_4)^{\frac{n+2}{n}}c_5}\bigg\{\io z_\eps^p\bigg\}^\frac{n+2}{n}\cdot\bigg\{\io z_\eps^\frac{p}{2}\bigg\}^{-\frac{4}{n}}+\frac{D}{pc_5}\bigg\{\io z_\eps^\frac{p}{2}\bigg\}^2\nn\\
&\le& -\frac{c_{11}}{p}\bigg\{\io z_\eps^p\bigg\}^\frac{n+2}{n}\cdot\bigg\{\io z_\eps^\frac{p}{2}\bigg\}^{-\frac{4}{n}}+\frac{c_{12}}{p}\bigg\{\io z_\eps^\frac{p}{2}\bigg\}^2
\eea
for all $t>t_0$ and $\eps\in(0,\eps_{\star\star})$, with $c_{11}:=D (2c_4)^{-\frac{n+2}{n}}c_5^{-1}$ and $c_{12}:=\frac{D}{c_5}$. Upon combining \eqref{g5.8}, \eqref{g5.9}, \eqref{g5.10} and \eqref{g5.11} with \eqref{g5.6}, we deduce
\bea{nn}
\frac{d}{dt}\io z_\eps^p&\le& -c_{11}\bigg\{\io z_\eps^p\bigg\}^\frac{n+2}{n}\cdot\bigg\{\io z_\eps^\frac{p}{2}\bigg\}^{-\frac{4}{n}}+c_{12}\bigg\{\io z_\eps^\frac{p}{2}\bigg\}^2\nn\\
&&+\frac{ac_3c_5c_6}{p}\bigg|\int_{E_\eps(t)}z_\eps^\frac{p-1}{2}\cdot\big\{e^{-z_\eps}+\eps^\frac{1}{\alpha}\big\}^{-\frac{\alpha}{2}}\bigg|^2+c_{10}p^{n\alpha+2\alpha-\frac{n}{2}-1}\bigg\{\io z_\eps^\frac{p}{2}\bigg\}^2\nn
\eea
for all $t>t_0$ and $\eps\in(0,\eps_{\star\star})$. By \eqref{4.15}, $c_2$ is independent of $t$ and $\eps$, so that $1\le z_\eps\le -\ln c_2$ on $E_\eps(t)$ for all $t\in(t_0,t_0+\tau_0)$ and $\eps\in(0,\eps_\star)$, uniformly in $t$ and $\eps$. We choose $C_1:=c_{11}$, $C_2>0$ large enough and $C_3:=n\alpha+2\alpha-\frac{n}{2}-1$ such that \eqref{4.14} holds with $\eps_3:=\eps_{\star\star}$.\abs
Case 2: Assume that \eqref{A2} holds. Then by \eqref{kappa-eps-bed} there exists $c_{13}>0$ such that $\kappa_\eps(\xi)\ge c_{13}(\xi+\eps)$ for all $\xi\in(0,\eta)$ and $\eps\in(0,1)$. Due to Lemma \ref{lemThetaBounded} and \eqref{kappa-bed}, we can find $c_{14}>0$ fulfilling $\kappa_\eps(\uTeps)\le c_{14}$ in $\Om\times(t_0,\infty)$ for all $\eps\in(0,1)$. Now \eqref{g5.3} directly yields
\bea{nn}
\frac{d}{dt}\io z_\eps^p\le -\frac{Dp(p-1)}{c_{14}}\io |\na z_\eps|^2 z_\eps^{p-2}+\frac{ap}{c_{13}}\io z_\eps^{p-1} \qquad\text{for all }t>t_0\text{ and }\eps\in(0,1).\nn
\eea
By the same procedure as \eqref{g5.10}-\eqref{g5.11}, we obtain \eqref{4.14}.
\qed
We now perform a Moser iteration based on the above estimate.
\begin{lem}\label{lem4.8}
Let $\eta>0$ be such that $\kappa_\eps'(\xi)\ge0$ for all $\xi\in(0,\eta)$ and $\eps\in(0,1)$. Fix $\delta\in(0,\delta_1)$, where $\delta_1\in(0,\eta)$ and $\eps_2\in(0,1)$ are as in Lemma \ref{lem4.6}; this value of $\delta$ remains fixed for the remainder of this section. With $\tau_0>0$ as in Lemma \ref{lem3.4} and $\eps_3\in(0,1)$ as in Lemma \ref{Lem5.3} assume that \eqref{A1} holds for some $\alpha\ge 0$ or that \eqref{A2} holds, both on $(0,\eta)$. Let $\tau\in(0,\min\{1,\tau_0\})$, then there exists $C(\tau)>0$ such that for all $t_0\in[0,T]$ we have
\bea{nn}
\uTeps(x,t)\ge C(\tau)\qquad\text{for all }t\in(t_0+\frac{\tau}{2},t_0+\tau),\ x\in\Om\text{ and } \eps\in(0,\eps_4),\nn
\eea
with $\eps_4:=\min\{\eps_2,\eps_3\}$.
\end{lem}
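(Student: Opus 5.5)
The plan is a Moser-type iteration in time for $M_k(t):=\io z_\eps^{p_k}(\cdot,t)$ with $p_k:=2^k$, started from the $L^1$ bound of Lemma \ref{lem4.6}. It produces an $\eps$-independent bound $\|z_\eps(\cdot,t)\|_{L^\infty(\Om)}\le c(\tau)$ for $t\in(t_0+\frac{\tau}{2},t_0+\tau)$. Since $z_\eps=-\ln\uTeps$, this gives $\uTeps\ge e^{-c(\tau)}=:C(\tau)$.

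\textbf{Time grid and starting bound.} Set $t_k:=t_0+\frac{\tau}{2}-\frac{\tau}{2^{k+2}}$ for $k\ge0$, so that $t_0+\frac{\tau}{4}=t_0<\dots$, more precisely $t_0+\frac{\tau}{4}= t_0\le t_1<t_2<\dots\nearrow t_0+\frac{\tau}{2}$ with $t_{k}-t_{k-1}=\tau 2^{-k-2}$. The base case comes from Lemma \ref{lem4.6}: since $\tau<\min\{1,\tau_0\}$, we get $\io z_\eps(\cdot,t)\le \frac{4C}{\tau}=:M_0$ for all $t\in(t_0+\frac{\tau}{4},t_0+\tau)$ and $\eps\in(0,\eps_2)$.

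\textbf{Inductive step.} Suppose $\sup_{t\in(t_{k-1},t_0+\tau)}\io z_\eps^{p_{k-1}}\le M_{k-1}$, where $p_k/2=p_{k-1}$. Lemma \ref{Lem5.3} with $p=p_k$ then gives, for $y(t):=\io z_\eps^{p_k}$,
\[
y'\le -C_1 M_{k-1}^{-4/n}y^{\frac{n+2}{n}}+C_2p_k^{C_3}M_{k-1}^2 .
\]
Here I use that the first term is decreasing in $\io z_\eps^{p_k/2}$, so the upper bound $M_{k-1}$ may be inserted there. This ODE inequality has an $\eps$-independent supersolution that blows up at $t=t_{k-1}$, namely a variant of Lemma \ref{AE2} with exponent $r=\frac{n+2}{n}$ in place of $2$. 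Explicitly, $\ov y(t)=A(t-t_{k-1})^{-n/2}+B$ with
\[
A\sim (C_1M_{k-1}^{-4/n})^{-n/2}=C_1^{-n/2}M_{k-1}^2,\qquad B\sim (C_2p_k^{C_3}M_{k-1}^{2+4/n}/C_1)^{n/(n+2)} .
\]
Evaluating on $(t_k,t_0+\tau)$ yields $M_k\le c\,b^k M_{k-1}^2$ for some constants $c,b>1$ independent of $k$, $\eps$ and $t_0$: the factors $(t_k-t_{k-1})^{-n/2}=(4\cdot2^k/\tau)^{n/2}$ and $p_k^{C_3 n/(n+2)}$ are geometric in $k$, and $M_{k-1}^{(2n+4)/(n+2)}=M_{k-1}^2$. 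Passing from $y$ to $\ov y$ needs only that $y$ is continuous on $[t_{k-1},t_0+\tau)$, hence finite at $t_{k-1}$, together with the comparison argument of Lemma \ref{AE2}. I also need $t_0+\tau\le t_0+\tau_0$, which holds, and $\eps<\eps_4$ so that both lemmas apply.

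\textbf{Closing the iteration.} Assuming $M_k\ge1$ without loss of generality, the standard computation gives $M_k\le (cb)^{2^k\sum_j j2^{-j}}M_0^{2^k}$. Hence $\|z_\eps(\cdot,t)\|_{L^{p_k}}=M_k^{1/2^k}\le (cb)^{c'}M_0$ for all $t\in(t_0+\frac{\tau}{2},t_0+\tau)$, uniformly in $k$. Letting $k\to\infty$ gives the $L^\infty$ bound, and with it the claim.

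\textbf{Main obstacle.} I expect the main obstacle to be the bookkeeping in the inductive step. The constants must grow only geometrically in $k$, which means checking carefully that the exponents combine to exactly $M_{k-1}^2$ and not to a higher power; a higher power would destroy the $2^{-k}$ root argument. The second delicate point is that the $\sup$ in the hypothesis must be taken over the whole remaining interval, so that Lemma \ref{Lem5.3} can be applied on all of $(t_{k-1},t_0+\tau)$.
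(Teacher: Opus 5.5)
Your proposal is correct. Its skeleton matches the paper's: a Moser iteration over $p_k=2^k$ driven by Lemma \ref{Lem5.3}, started from the $L^1$ bound of Lemma \ref{lem4.6}, closed by ODE comparison with a supersolution that blows up at the starting time, and giving a recursion $M_k\le c\,b^kM_{k-1}^2$ whose $2^{-k}$-th roots stay bounded.

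The difference is how you handle the singularity at the initial time.
\begin{itemize}
\item \textbf{Your route.} You use nested windows $(t_k,t_0+\tau)$ with $t_k\nearrow t_0+\frac{\tau}{2}$, and restart the comparison at each $t_{k-1}$ with the supersolution $A(t-t_{k-1})^{-n/2}+B$, modelled on Lemma \ref{AE2}. Your choices of $A$ and $B$ check out, via $(a+b)^{\frac{n+2}{n}}\ge a^{\frac{n+2}{n}}+b^{\frac{n+2}{n}}$ and $M_{k-1}^{(2n+4)/(n+2)}=M_{k-1}^2$. The cost $(t_k-t_{k-1})^{-n/2}=(2^{k+2}/\tau)^{n/2}$ is absorbed into the geometric factor.
\item \textbf{The paper's route.} It keeps the single starting time $t_0$ for every $k$ and works with weighted quantities $M_{\eps,k}=1+\sup_{t\in(0,\tau)}t^{\gamma_k}\io z_\eps^{p_k}(\cdot,t_0+t)$, where $\gamma_k=2\gamma_{k-1}+\frac{n}{2}$ and $\gamma_0=1$. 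With this choice the pure power $c_{\eps k}t^{-\gamma_k}$ is an exact supersolution, and the base case $t\io z_\eps\le C$ comes directly from Lemma \ref{lem4.6}.
\end{itemize}
The weighted version gives a bound on all of $(t_0,t_0+\tau)$ with an explicit rate, $\|z_\eps(\cdot,t_0+t)\|_{L^\infty(\Om)}\le c\,t^{-c_4}$. Its constants are independent of $\tau$ beyond $\tau\le 1$. Your version avoids tuning the exponents $\gamma_k$. It yields the bound only on $(t_0+\frac{\tau}{2},t_0+\tau)$ and with $\tau$-dependent constants, but that is exactly what the lemma asks for.

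Two small slips to fix:
\begin{itemize}
\item The term $-C_1y^{\frac{n+2}{n}}\big(\io z_\eps^{p_k/2}\big)^{-4/n}$ is \emph{increasing}, not decreasing, in $\io z_\eps^{p_k/2}$. That monotonicity is precisely what lets you insert the upper bound $M_{k-1}$.
\item Rename the first grid point, since $t_0$ already denotes the starting time of $\uTeps$.
\end{itemize}
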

\proof
Fix $k\ge1$ and define $p_k:=2^k$. From Lemma \ref{Lem5.3} we know the existence of $c_1>0$, $c_2>0$ and $c_3>0$ fulfilling
\bea{g5.12}
\frac{d}{dt}\io z_\eps^{p_k}\le -c_1\bigg\{\io z_\eps^{p_k}\bigg\}^{\frac{n+2}{n}}\cdot\bigg\{\io z_\eps^{\frac{p_k}{2}}\bigg\}^{-\frac{4}{n}}+c_2 p_k^{c_3}\bigg\{\io z_\eps^{\frac{p_k}{2}}\bigg\}^2
\eea
for all $t\in(t_0,t_0+\tau)$ and $\eps\in(0,\eps_4)$. We further define $\gamma_0:=1$ and for $k\ge 1$
\bea{g5.13}
\gamma_k:=2\gamma_{k-1}+\frac{n}{2}\quad\text{and}\quad c_{\eps k}:=\max\bigg\{\bigg(\frac{2\gamma_k}{c_1}\bigg)^\frac{n}{2}M_{\eps,k-1}^2,\ \frac{c_2p_k^{c_3} M_{\eps,k-1}^2}{\gamma_k},\ 1\bigg\},\quad\text{for }\eps\in(0,\eps_4),
\eea
where
\bea{g5.14}
M_{\eps,k}:=1+\sup_{t\in(0,\tau)}t^{\gamma_k}\io z_\eps(\cdot,t_0+t)^{p_k},\quad\text{for }k\ge 0\text{ and }\eps\in(0,\eps_4).
\eea
With $y_{\eps k}(t):=\io z_\eps(\cdot,t_0+t)^{p_k}$ for $t\in(0,\tau)$, $\eps\in(0,\eps_4)$, \eqref{g5.12} directly yields
\bea{g5.15}
y_{\eps k}'\le -c_1 y_{\eps k}^{\frac{n+2}{n}} t^{\frac{4\gamma_{k-1}}{n}} M_{\eps,k-1}^{-\frac{4}{n}}+c_2p_k^{c_3}M_{\eps,k-1}^{2}t^{-2\gamma_{k-1}}\qquad\text{for all }t\in(0,\tau)\text{ and }\eps\in(0,\eps_4).
\eea
Let us define
\bea{nn}
\Phi_{\eps k}(t):=c_{\eps k}t^{-\gamma_k},\qquad t\in(0,\tau)\text{ and }\eps\in(0,\eps_4).\nn
\eea
Since $\frac{4\gamma_{k-1}}{n}-\gamma_k\frac{n+2}{n}=\frac{4\gamma_{k-1}}{n}-\frac{2\gamma_k}{n}-\gamma_k\le -\gamma_k-1$ and $\tau\leq 1$, a direct computation shows that $\Phi_{\eps k}$ satisfies
$$t^{\frac{4\gamma_{k-1}}{n}}\Phi_{\eps k}^{\frac{n+2}{n}}\ge c_{\eps k}^{\frac{n+2}{n}}t^{-\gamma_k-1}\qquad\text{for all }t\in(0,\tau)\text{ and }\eps\in(0,\eps_4),$$
and combining this with \eqref{g5.13}, we immediately obtain
\bea{nn}
t^{\gamma_k+1}&&\hspace{-10mm}\cdot\big\{\Phi_{\eps k}'+c_1M_{\eps,k-1}^{-\frac{4}{n}} t^{\frac{4\gamma_{k-1}}{n}}\Phi_{\eps k}^{\frac{n+2}{n}}-c_2p_k^{c_3}M_{\eps,k-1}^2t^{-2\gamma_{k-1}}\big\}\nn\\
&\ge& -\gamma_k c_{\eps k}+c_1 c_{\eps k}^{\frac{n+2}{n}} M_{\eps,k-1}^{-\frac{4}{n}}-c_2p_k^{c_3} M_{\eps,k-1}^2 t^{\gamma_k+1-2\gamma_{k-1}}\nn\\
&=& c_{\eps k}(c_1c_{\eps k}^{\frac{2}{n}}M_{\eps,k-1}^{-\frac{4}{n}}-\gamma_k)-c_2p_k^{c_3} M_{\eps,k-1}^2 t^{\gamma_k+1-2\gamma_{k-1}}\nn\\
&\ge& c_{\eps k}(2\gamma_k-\gamma_k)-c_2p_k^{c_3}M_{\eps,k-1}^2\ge 0\qquad\text{for all }t\in(0,\tau)\text{ and }\eps\in(0,\eps_4),\nn
\eea
that is, $\Phi_{\eps k}$ is a supersolution to the differential inequality satisfied by $y_{\eps k}$ in \eqref{g5.15} for all $\eps\in(0,\eps_4)$. Since the right-hand side of \eqref{g5.15} is strictly decreasing in the solution variable $y_{\eps k}$, and since $$\lim_{t\searrow 0}\Phi_{\eps k}(t)=\infty,\qquad\text{for all }\eps\in(0,\eps_4)$$
a comparison argument yields
\bea{g5.20}
y_{\eps k}(t)\le\Phi_{\eps k}(t)=c_{\eps k} t^{-\gamma_k}\qquad\text{for all }t\in(0,\tau)\text{ and }\eps\in(0,\eps_4).
\eea
Due to the definition of $\gamma_k$ in \eqref{g5.13}, there exists a constant $c_4>0$ such that
\bea{g_k}
\gamma_k\le c_4 2^k\qquad\text{for all }k\ge 1.
\eea
This allows us to choose constants $c_5>0$ and $c_6>0$ satisfying
$$2\bigg(\frac{2 \gamma_k}{c_1}\bigg)^\frac{n}{2}\le 2\bigg(\frac{c_42^{k+1}}{c_1}\bigg)^\frac{n}{2}\le c_5^k\quad\text{and}\quad 2\frac{c_2p_k^{c_3}}{\gamma_k}\le 2^{kc_3+1}c_2\le c_6^k\qquad\text{for all }k\ge1,$$
so from \eqref{g5.13}, \eqref{g5.14} and \eqref{g5.20} we finally obtain
\bea{nn}
M_{\eps,k}\le 2c_{\eps k}\le c_7^k M_{\eps,k-1}^{2}\qquad\text{for all }k\ge1\text{ and }\eps\in(0,\eps_4),\nn
\eea
with $c_7:=\max\{c_5,c_6,1\}$. Thus we immediately obtain
$$M_{\eps,k}\le c_7^{\sum_{j=1}^k j2^{k-j}}M_{\eps,0}^{2^k}=c_7^{2^{k+1}-k-2}M_{\eps,0}^{2^k}\le c_7^{2^{k+1}}M_{\eps,0}^{2^k}\qquad\text{for all }k\ge1\text{ and }\eps\in(0,\eps_4)$$
and finally
$$M_{\eps,k}^\frac{1}{2^k}\le c_7^2M_{\eps,0}\qquad\text{for all }k\ge 1\text{ and }\eps\in(0,\eps_4).$$
From this, 
$$\sup_{t\in(0,\tau)} t^\frac{\gamma_k}{2^k} \bigg\{\int_\Omega z_\eps(\cdot,t_0+t)^{2^k}\bigg\}^\frac{1}{2^k} \le c_7^2 M_{\eps,0}\qquad\text{for all }k\ge 1\text{ and }\eps\in(0,\eps_4).$$
From \eqref{g_k} we get $t^\frac{\gamma_k}{2^k}\ge t^{c_4}$ for all $t\in(0,\tau)\subset(0,1)$, thus taking $k \to \infty$ yields
$$\|z_\eps(\cdot,t_0+t)\|_{L^\infty(\Om)} \le t^{-c_4} c_7^2 M_{\eps,0} \qquad \text{for all } t \in (0,\tau) \text{ and } \varepsilon \in (0,\varepsilon_3).$$
According to Lemma \ref{lem4.6} and \eqref{g5.14} we can fix $t\in(t_0+\frac{\tau}{2},t_0+\tau)$ and find $c_8=c_8(\tau)>0$ such that
$$\|z_\eps(\cdot,t)\|_{L^\infty(\Om)}\le c_8\qquad\text{for all }\eps\in(0,\eps_4),$$
which entails that $\uTeps(x,t)\ge e^{-c_8}$ for all $x\in\Om$ and $\eps\in(0,\eps_4)$.
\qed
Iterating the above argument, we finally arrive at our desired result:\abs
{\sc Proof of Theorem \ref{theo2}.} \quad Lemma \ref{lem4.8} provides the bound on the initial interval $(\frac{\tau}{2}, \tau)$. To extend it to $(\tau, T)$ for arbitrary $T > \tau$, we iterate forward in time: by Lemma \ref{W1}, every time level $t_0$ possesses a set of positive measure where $\Theta_\eps$ is bounded away from zero. Thus, applying Lemma \ref{lem4.8} repeatedly with starting times $t_0^{(k)} = \frac{k\tau}{4}$, the resulting intervals cover $(\tau, T)$. By pointwise convergence \eqref{2.23}, the assertion follows.
\qed
\bigskip
{\bf Acknowledgment.} \quad
The author acknowledges support of the Deutsche Forschungsgemeinschaft (Project No. 444955436).
He moreover declares that he has no conflict of interest.\abs
{\bf Data availability statement.} \quad
Data sharing is not applicable to this article as no datasets were
generated or analyzed during the current study.

\end{document}